%% file: paper.tex
\pdfoutput=1
\documentclass[11pt]{amsart}
\usepackage{amsmath, amssymb}
\usepackage[margin=1.15in]{geometry}
\usepackage{microtype}
\usepackage{booktabs}
\usepackage{array}
\usepackage{capt-of}
\usepackage{graphicx}
\usepackage{tikz}
\usetikzlibrary{arrows.meta, decorations.pathmorphing, fadings, calc, angles, quotes}
\usepackage[hidelinks]{hyperref}
\hypersetup{pdftitle={The two most symmetric flat tori as eight-vertex paper tori}, pdfauthor={Fabian Lander}}
\definecolor{figblue}{RGB}{40,90,160}
\definecolor{figred}{RGB}{170,60,50}
\tikzset{
  vlab/.style={font=\scriptsize, inner sep=1pt, fill=white, fill opacity=0.75, text opacity=1},
  flab/.style={font=\tiny, gray},
}

\newcommand{\R}{\mathbb{R}}
\newcommand{\Q}{\mathbb{Q}}
\newcommand{\Z}{\mathbb{Z}}
\newcommand{\C}{\mathbb{C}}
\newcommand{\HH}{\mathbb{H}}
\newcommand{\tauhat}{\hat\tau}
\newcommand{\Rep}{\operatorname{Re}}
\newcommand{\Imp}{\operatorname{Im}}
\newcommand{\file}[1]{\texttt{#1}}

\theoremstyle{plain}
\newtheorem{theorem}{Theorem}
\newtheorem{prop}{Proposition}
\newtheorem{lemma}{Lemma}
\newtheorem{cor}{Corollary}
\newtheorem*{theoremx}{Theorem}

\title[The most symmetric paper tori]{The two most symmetric flat tori as eight-vertex paper tori}
\author{Fabian Lander}
\address{Max Planck Institute for Mathematics in the Sciences, Inselstr.\ 22, 04103 Leipzig, Germany}
\email{fabian.lander@mis.mpg.de}
\thanks{The author is supported by the International Max Planck Research School Mathematics in the Sciences. He acknowledges the support of the Institut Henri Poincar\'e (UAR 839 CNRS-Sorbonne Universit\'e), and LabEx CARMIN (ANR-10-LABX-59-01).}

\begin{document}

\begin{abstract}
A paper torus is a polyhedral torus embedded in $\R^3$ whose intrinsic metric is locally Euclidean, a torus folded from a flat sheet of paper. Many constructions are known, from dozens of vertices into the thousands, and it is natural to ask how few suffice. Schwartz proved that no paper torus has seven vertices, and built one with eight. Doyle and Schwartz then built paper tori realizing almost every shape of flat torus with eight vertices, their construction missing exactly two rays in moduli space, all the rectangular tori, and the rhombic tori of aspect ratio at least $\sqrt3$. These rays end at the two most symmetric flat tori, the square torus and the hexagonal torus. We build an eight-vertex paper torus realizing each of them. We also prove that eight-vertex paper tori realize every shape near these two, since the flat configurations around both form a smooth seventeen dimensional manifold on which the modulus is a submersion.
\end{abstract}

\maketitle

\setcounter{tocdepth}{1}
\tableofcontents

\section{Introduction}\label{sec:intro}

Take a flat sheet of paper, and fold and glue it into a torus. The intrinsic geometry of the sheet never changes, and the result is a \emph{flat torus}, a torus whose intrinsic metric is locally Euclidean. It is realized in three-space as a polyhedral surface. Such \emph{paper tori} were first constructed by Burago and Zalgaller \cite{burago-zalgaller-1960}, who later proved that every isometry class of flat torus is realized, though only by constructions of enormous size \cite{burago-zalgaller} (see \cite{lazarus-tallerie}). Explicit constructions followed, from Brehm's diplotori \cite{brehm} to origami tori \cite{tsuboi} and the diplotorus family of Arnoux, Leli\`evre, and M\'alaga \cite{alm}. Quintanar embedded the square torus explicitly \cite{quintanar}, and Lazarus and Tallerie built a single universal triangulation, with $2434$ faces, admitting every flat torus \cite{lazarus-tallerie}. The question these constructions leave open is how few vertices a paper torus can have. It was a long-standing one, and appears in print in \cite{quintanar} and \cite{lazarus-tallerie}. Tugay\'e found one with $9$ \cite{tugaye}, and Schwartz then settled the question, proving that no paper torus has $7$ vertices and that $8$-vertex paper tori exist \cite{schwartz-bound}.

The natural next question is \emph{which} flat tori arise with eight vertices. The shape of a flat torus is recorded by a single point $\tauhat$ in the classical modular domain, its \emph{reduced modulus} (Section \ref{sec:flat}). Doyle and Schwartz proved \emph{near universality} for $8$-vertex paper tori \cite{doyle-schwartz}: almost every point in moduli space admits one. Their construction uses the valence-regular triangulation and enforces an order-$2$ rotational symmetry \cite{schwartz-bound}. Its moduli sweep out everything except two rays, the rectangular tori and the rhombic tori of aspect ratio at least $\sqrt3$, so in particular every flat torus \emph{without reflection symmetry} is realized by an $8$-vertex paper torus. Doyle and Schwartz ask whether these excluded shapes can be realized once that symmetry is dropped.

The two excluded rays end at the square torus and the hexagonal torus. We build an $8$-vertex paper torus of modulus exactly $\tauhat = i$, on the valence-regular triangulation, and one of modulus exactly $\tauhat = \rho = e^{i\pi/3}$, on a second triangulation. They are the two orbifold points of moduli space, the most symmetric flat tori there are.

\begin{figure}[t]
\centering
\includegraphics[width=\textwidth]{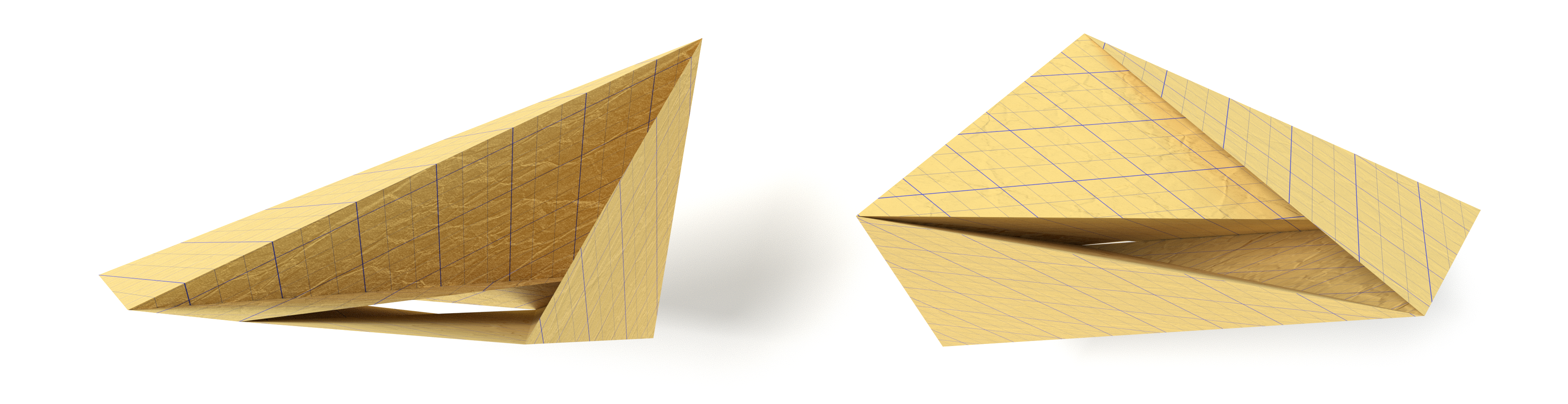}
\caption{An inflated square torus, left, and an inflated hexagonal torus, right, each with eight vertices. The square and hexagonal tori lie at the ends of the two rays that the constructions of \cite{doyle-schwartz} leave out. Both are exactly embedded, their cone angles and moduli correct to fifteen digits, and numerically each is joined to the tori of Theorem \ref{t:main} by a path of $8$-vertex paper tori of the same modulus (Appendix \ref{app:fat}).}\label{fig:fat-renders}
\end{figure}

\begin{theorem}\label{t:main}
The square torus and the hexagonal torus are each realized by an $8$-vertex paper torus.
\end{theorem}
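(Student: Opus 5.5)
The proof is by explicit construction, followed by a certification step. For each target modulus, $\tauhat=i$ and $\tauhat=\rho$, I will fix a combinatorial triangulation of the torus with $8$ vertices and $16$ triangles. For the square torus this is the valence-regular triangulation of \cite{schwartz-bound}. For the hexagonal torus it is a second triangulation, chosen so that its combinatorics are compatible with the target shape. I will then write down explicit coordinates for the eight vertices in $\R^3$. A paper torus on a given triangulation is determined by its vertex positions, so there are $24$ real parameters. The conditions to impose are the following:
\begin{itemize}
\item the cone angle at each vertex equals $2\pi$, which gives $8$ equations, of which $7$ are independent by Gauss--Bonnet;
\item the developed holonomy lattice has modulus exactly $\tauhat$, which gives $2$ real equations;
\item the polyhedral map is an embedding.
\end{itemize}
Modding out by the $6$-dimensional group of rigid motions, this leaves $24-6-7=11$ dimensions of flat configurations. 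Dropping the rigid-motion quotient gives $24-7=17$, which matches the seventeen-dimensional manifold of the abstract. So a solution with exact modulus should be a codimension-two condition inside a nonempty family.

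To find candidate solutions, I would start from a Doyle--Schwartz torus whose modulus lies near the excluded ray. I then drop the order-$2$ symmetry and run Newton's method on the $9$ equations (cone angles and modulus), taking the minimum-norm step at each iteration and monitoring embeddedness along the way. This should converge to numerical configurations with modulus $i$ and modulus $\rho$.

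To turn these into a proof, I would certify an exact solution near each numerical one. There are two possible routes:
\begin{enumerate}
\item Freeze $15$ of the coordinates at rational values. This leaves a square $9\times 9$ system, to which I apply a Newton--Kantorovich (Krawczyk) interval argument. This yields a unique true solution in a small box around the numerical approximation.
\item Exploit symmetry. The square or hexagonal target may admit a residual symmetry, for example an order-$4$ or order-$3$ rotation acting on the triangulation, which could make the modulus condition automatic. For the square this would enforce $|\omega_1|=|\omega_2|$ and orthogonality. The system would then reduce to cone-angle equations alone, which could be solved in closed or algebraic form.
\end{enumerate}

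Once a solution is certified, I must verify the remaining requirements. Cone angles equal to $2\pi$ mean the surface is intrinsically flat. To confirm that its modulus reduces to exactly $\tauhat$, I develop a fundamental domain along a spanning tree and read off the two holonomy vectors. Embeddedness I would check robustly: all pairs of non-adjacent triangles must be separated by a positive margin, and adjacent triangles must be non-degenerate and not folded flat onto each other. Both conditions are open, so they persist throughout the interval box.

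I expect the main obstacle to be the hexagonal case. It requires finding a triangulation on which modulus $\rho$ is reachable at all without self-intersection, given that the Doyle--Schwartz family degenerates along the ray of rhombic tori of aspect ratio at least $\sqrt3$. My first attempt would be a numerical search: enumerate the $8$-vertex torus triangulations and, for each one, minimize the distance to modulus $\rho$ subject to flatness and an embeddedness penalty. I would keep the first triangulation whose embeddedness margin stays bounded away from zero as the modulus reaches exactly $\rho$.
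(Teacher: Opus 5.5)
Your main route (locate an embedded, numerically flat torus of the target modulus, freeze $15$ coordinates, certify a true zero of the remaining $9\times9$ system by a Krawczyk argument, and check embeddedness over the box) is a legitimate computer-assisted proof. It is, however, a different proof from the paper's, which never certifies anything near an inflated torus. The paper instead does three things:
\begin{itemize}
\item It starts from an exactly flat configuration collapsed into the plane, with coordinates in $\Q$, respectively $\Q(\sqrt3)$ (Table~\ref{tab:base}), where flatness and the modulus are exact identities in that field.
\item It lifts the vertices vertically at constant rational speeds. Embeddedness for all $t>0$ then becomes a finite set of strict linear sign conditions checked exactly, and one horizontal neighborhood serves every height (Proposition~\ref{p:separation}, Lemma~\ref{l:persist}).
\item It restores flatness and modulus by the implicit function theorem from an exact rank-nine Jacobian (Proposition~\ref{p:jacobian}). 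Like you, it holds $15$ coordinates fixed, here the $8$ heights $t\zeta_v$ and $7$ horizontal ones.
\end{itemize}
That route uses no floating point and no enclosures of transcendental cone angles, and it yields Theorem~\ref{t:path} from the same Jacobian. The price is that its tori are nearly collapsed and inexplicit. Yours would certify a definite torus far from the plane, which the paper only records numerically (Appendix~\ref{app:fat}). But everything then hinges on first producing a well-conditioned embedded numerical solution whose margin exceeds the box, and you only sketch that step. Nothing guarantees that Newton continuation from a Doyle--Schwartz torus stays embedded all the way to the excluded ray. The evidence that such solutions exist on $T$ and $T'$ is the paper's own direct numerical search.

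Two parts of the proposal need repair.

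Your second route cannot work. An isometry of $\R^3$ preserving an embedded torus $\Sigma$ preserves the bounded component of $\R^3\setminus\Sigma$. It therefore preserves the rank-one kernel of $H_1(\Sigma)\to H_1$ of that component, so it acts on $H_1(\Sigma)\cong\Z^2$ with an eigenvalue $\pm1$. This rules out the order-$3$, $4$ and $6$ lattice rotations that would force $\tau$ to be $\rho$ or $i$. Imposing the symmetry only on edge lengths does not help either, since it trades the two modulus equations for many more.

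Your embeddedness test misses the pairs of faces sharing exactly one vertex. These admit no positive margin, and nondegeneracy does not stop them from crossing along a segment issuing from the common vertex. They need a test of their own, for instance strict separation of the two corner sectors by a plane through that vertex, or the corner criterion of Lemma~\ref{l:sepcert}. Openness must also be made quantitative, by bounding all margins over the whole interval box.
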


Both achieve the minimum vertex count possible for their modulus, by Schwartz's overall bound. The proof gives more than the two tori.

\begin{theorem}\label{t:path}
Near each of these two tori the flat configurations form a smooth seventeen dimensional manifold, on which the modulus is a submersion.
\end{theorem}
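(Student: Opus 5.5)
Fix one of the two triangulations $T$ (the valence-regular one for $\tauhat=i$, the second one for $\tauhat=\rho$). An $8$-vertex torus triangulation has $V=8$, $E=24$ and $F=16$ by Euler's formula. A configuration is a placement of the eight vertices, a point $p\in\R^{24}$. The flatness condition says that the cone angle at each vertex equals $2\pi$. This gives a map $\Theta:\R^{24}\to\R^8$, $\Theta(p)=(\theta_v(p)-2\pi)_v$. The flat configurations near the torus $p_0$ of Theorem \ref{t:main} are $\Theta^{-1}(0)$.

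The angle defects satisfy one linear relation. By Gauss--Bonnet (equivalently, the face angle sums), $\sum_v(\theta_v-2\pi)=\pi F-2\pi V=0$ holds identically. So $\Theta$ takes values in the hyperplane $H=\{\sum x_v=0\}\cong\R^7$. If $d\Theta_{p_0}$ has rank $7$ onto $H$, the implicit function theorem makes $\Theta^{-1}(0)$ near $p_0$ a smooth manifold of dimension $24-7=17$. This agrees with the count in the statement, so that count is the target.

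Next, the modulus map $\tauhat$ on this manifold. The developing map of the flat metric gives a holonomy lattice in $\C$, defined up to similarity. Its ratio $\tau=\omega_2/\omega_1$ is a smooth (real-analytic) function of $p$, because the developed positions are compositions of the edge lengths and the flat angles. Near the orbifold points, the reduction $\tauhat$ is not smooth. So I will work with the unreduced $\tau$ in $\HH$ and state submersion for the local lift. This is harmless because the reduction to the modular domain is a local diffeomorphism away from the orbifold points, and is a branched cover at them, so submersion of the lift is the meaningful statement. Submersion means that $d\tau$ restricted to $\ker d\Theta_{p_0}$ has real rank $2$. This amounts to checking that the $9\times 24$ matrix formed by stacking $d\Theta$ (seven independent rows) with $d\Rep\tau$ and $d\Imp\tau$ has rank $9$ at $p_0$.

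Both rank conditions are open conditions, and they are computed from exact data. The proof of Theorem \ref{t:main} supplies explicit coordinates for the vertices, presumably algebraic or certified by interval arithmetic. I would compute the Jacobian entries symbolically: the derivative of a face angle with respect to a vertex position has a closed form via cotangent-type formulas. I would then exhibit a nonzero $9\times9$ minor, certified either in exact algebraic arithmetic or by interval arithmetic bounding it away from zero. Embeddedness is open, so the nearby flat configurations remain embedded paper tori. Since $\tau$ is a submersion, its image contains a neighbourhood of $i$ (respectively $\rho$), which gives the "every nearby shape" corollary.

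The main obstacle is certification. If the coordinates of $p_0$ are only known to high precision rather than exactly, the minor must be bounded uniformly over a small ball containing the true torus. This needs an interval or Krawczyk-type argument that simultaneously locates an exact flat configuration and bounds the minor on that ball. The symmetry at $i$ and $\rho$ is a further risk: it might force degeneracy of $d\tau$ along symmetric directions. I would check that the minor is nonzero by choosing non-symmetric columns first.
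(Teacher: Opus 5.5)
Your reduction is the same as the paper's. Once the $9\times 24$ matrix formed by seven deficit differentials and the two differentials of $\Rep\tau$, $\Imp\tau$ has rank nine, the deficits have zero as a regular value, giving dimension $24-7=17$. Also, no nontrivial combination of the modulus rows can annihilate the common kernel of the deficit rows, which is the submersion.

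The gap is in where you evaluate that matrix. You assume the torus $p_0$ of Theorem \ref{t:main} comes with explicit coordinates, algebraic or interval-certified, but it does not. The tori $q_t$ are produced by the implicit function theorem, and their horizontal displacements are not known in any form. The paper states that no $8$-vertex paper torus of modulus exactly $i$ or $\rho$ with exact coordinates is known. So there is nothing to feed into your angle-derivative formulas. Your fallback would need a Krawczyk-type argument that locates a genuine flat configuration of modulus exactly $\tau_0$ in a ball and bounds the minor there. That is a separate certified-existence proof, which would in effect re-prove Theorem \ref{t:main}, and you have not carried it out.

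The missing idea is to move the base point to the folded planar configuration $Q^0$, respectively $P^0$.

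\begin{itemize}
\item It is not embedded, but its faces are nondegenerate. So $\Phi$ is smooth on a neighbourhood of it, once $\tau$ is extended off $\mathcal{F}$ by a fixed development tree and fixed reading faces. Some such extension is needed anyway for your $9\times 24$ matrix to make sense.
\item By Lemma \ref{l:planar}, every Jacobian entry there lies in $\Q$, respectively $\Q(\sqrt3)$. So a $9\times 9$ minor is shown nonzero in exact arithmetic (Proposition \ref{p:jacobian}).
\item At a planar configuration the height columns vanish identically, since the squared edge lengths depend on heights only through $(z_a-z_b)^2$. So the minor must be taken in horizontal coordinates.
\item Rank nine is an open condition, so it persists on a neighbourhood $W$ of the base. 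Your regular-value and submersion argument then runs verbatim on $W$.
\item The $q_t$ converge to the base as $t\to 0$ (Lemma \ref{l:opening}), so the embedded tori $q_t$ lie in $W$ for all small $t$. That is the precise meaning of ``near each of these two tori.''
\end{itemize}

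No uniform bound over an uncertain ball is needed. Your worry about symmetry forcing degeneracy is settled by the exact computation itself.
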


So every shape near the square torus and near the hexagonal torus is realized by an $8$-vertex paper torus as well. As the square and hexagonal tori are the endpoints of the excluded rays in \cite{doyle-schwartz}, our construction provides vertex-minimal realizations of an initial segment of each.

\begin{cor}\label{c:rays}
There is an $\varepsilon > 0$ such that every rectangular torus $\tauhat = iY$ with $1 \le Y < 1 + \varepsilon$ and every rhombic torus $\tauhat = \tfrac12 + iH$ with $\tfrac{\sqrt3}{2} \le H < \tfrac{\sqrt3}{2} + \varepsilon$ is realized by an $8$-vertex paper torus.
\end{cor}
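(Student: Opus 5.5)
The plan is to deduce Corollary \ref{c:rays} directly from Theorem \ref{t:path}, together with the fact that embeddedness is an open condition. Let $P_\square$ and $P_{\rho}$ be the $8$-vertex paper tori of Theorem \ref{t:main}, with reduced moduli $\tauhat = i$ and $\tauhat = \rho$. Near each one, Theorem \ref{t:path} supplies a smooth $17$-dimensional manifold $M$ of flat configurations, meaning vertex positions in $(\R^3)^8$ on the fixed triangulation with all cone angles equal to $2\pi$. The modulus map $\tau\colon M \to \HH$, taken with respect to a fixed choice of homology basis, is a submersion at $P$. By the local submersion theorem, $\tau$ is open near $P$, so there is a neighbourhood $U$ of $P$ in $M$ whose image $\tau(U)$ contains an open disk $D$ around $\tau(P)$ in $\HH$.

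The next step is to shrink $U$ so that every configuration in it is still an embedded paper torus. The faces of $P$ are nondegenerate triangles, and non-adjacent faces lie at positive distance from each other. Adjacent faces meet only along their common edge or vertex, with dihedral angles bounded away from $0$ and $2\pi$. All of these are strict inequalities in the vertex coordinates, so they persist on a small neighbourhood. Hence every point of the shrunken $U$ is an embedded flat polyhedral torus with $8$ vertices, and every modulus in the corresponding disk $D$ is realized.

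Finally I pass from $\tau$ to the reduced modulus $\tauhat$. The map $\HH \to$ (modular domain) is continuous, and so is the map on configurations, since we only change basis by $SL_2(\Z)$. So it suffices that the segments $\{iY : 1 \le Y < 1+\varepsilon\}$ and $\{\tfrac12 + iH : \tfrac{\sqrt3}{2} \le H < \tfrac{\sqrt3}{2}+\varepsilon\}$ lie in $D$ for some choice of lift, and this holds as soon as $\varepsilon$ is smaller than the radii of the two disks. Concretely, $\tau(P_\square)=i$ in a suitable basis, so $iY \in D$ for $Y$ close to $1$. Likewise $\tau(P_\rho)=\rho = \tfrac12 + i\tfrac{\sqrt3}{2}$, so the points $\tfrac12 + iH$ lie in $D$ for $H$ close to $\tfrac{\sqrt3}{2}$. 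These points are already in the standard fundamental domain, so they are their own reduced moduli. Taking $\varepsilon$ to be the minimum over the two cases finishes the argument.

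The only point needing care is the openness of embeddedness. In particular, the local model of Theorem \ref{t:path} must be a manifold of genuine realizations on the same combinatorics, not just solutions of the cone-angle equations up to the Euclidean motion group, whose orbits contribute dimensions but do not change the modulus. With that understood, the rest is the standard open-mapping consequence of a submersion.
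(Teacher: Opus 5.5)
Your argument is correct and is essentially the paper's proof: take the embedded tori $q_t$ of Theorem \ref{t:main}, which lie in the neighborhood $W$ of Theorem \ref{t:path}, use openness of embeddedness to get an embedded neighborhood inside the flat manifold, apply the open-mapping property of the submersion $\tau$ to get a disk around $i$ (resp.\ $\rho$), and note that the points $iY$ and $\tfrac12 + iH$ are their own reduced representatives. One small inaccuracy: the reduction map $\HH \to \mathcal{D}$ is not continuous (it jumps across the boundary of $\mathcal{D}$, e.g.\ near $\rho$), only the map to the quotient $\HH/\mathrm{PSL}(2,\Z)$ is; you never actually use this, because the target points already lie in $\mathcal{D}$ with $\Rep\tauhat \ge 0$.
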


The square torus has been built before. Quintanar embedded it polyhedrally with $40$ vertices \cite{quintanar}, and an earlier $8$-vertex flat square torus of Gott and Vanderbei \cite{gott-vanderbei} is not embedded. The hexagonal torus has been built before as well, by Lazarus and Tallerie with $38$ vertices \cite{lazarus-tallerie}. This paper provides the first vertex-minimal embedded constructions.

The tori guaranteed by the proofs are nearly collapsed, arising from an arbitrarily small lift off a planar configuration. Theorem \ref{t:path} suggests more may be available, since near each of them the paper tori of the same modulus form a smooth manifold of dimension fifteen, which may extend well beyond the neighborhood given by the existence proof. Numerically moving along this manifold away from the planar configuration produces the two inflated tori of Figure \ref{fig:fat-renders}, recorded in Appendix \ref{app:fat} with printable nets, and Figure \ref{fig:inflation} shows the square torus at three points of that motion. How many components the paper tori of one modulus form, and whether every such torus can be driven into the plane this way, are among the open questions of Section \ref{sec:discussion}.

\begin{figure}[!htbp]
\centering
\includegraphics[width=\textwidth]{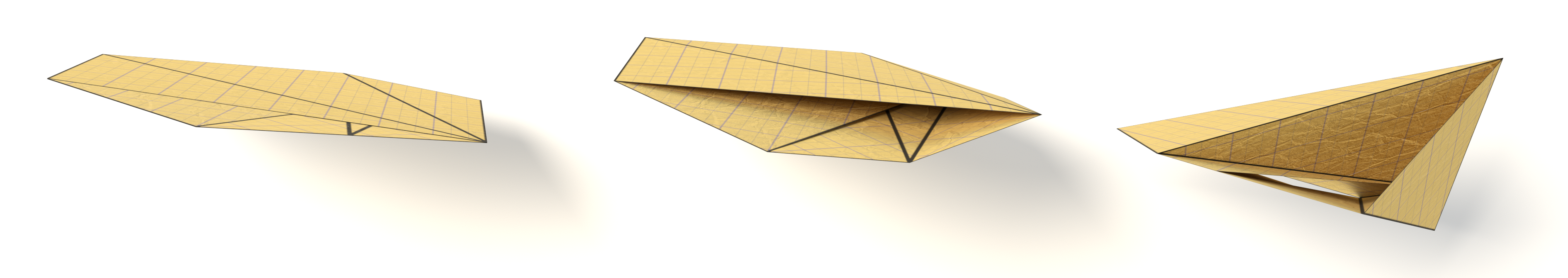}
\caption{The square torus at three points of the numerical path, all flat of modulus $i$. Left, the planar base $Q^0$ of Table \ref{tab:base}, exactly flat with modulus exactly $i$. Middle, a slightly inflated torus. Right, the inflated torus of Figure \ref{fig:fat-renders}. The two inflated ones are numerical, their cone angles and moduli correct to fifteen digits.}\label{fig:inflation}
\end{figure}

\subsection*{The strategy}
The proof of Theorem \ref{t:main} is a pure existence argument. Following Doyle and Schwartz, we do not look for an embedded torus directly but for a configuration collapsed into the plane, where the computations of Section \ref{sec:flat} land in the field of the vertex coordinates, number fields in our case. This allows us to verify flatness and the modulus in exact arithmetic. Such a configuration is not embedded, but its faces lie against one another rather than through one another, folded back and forth, so a small push separates them. In the geometry of configuration space, the subspace of collapsed configurations meets the boundary of the embedded set, and the proof starts on that boundary and walks into the embedded flat tori of the pinned modulus (Section \ref{ss:geometry}). The work is to make that push while keeping flatness and the modulus exact. An explicit lift breaks both, and the implicit function theorem fixes them. Theorem \ref{t:main} is proved this way, in three steps, and Theorem \ref{t:path} follows from the same computation.

\begin{itemize}
\item \textbf{The base.}\  To get everything started, we need to produce \emph{exact} planar configurations realizing the square and hexagonal tori. Section \ref{sec:flat} develops the tools to certify flatness and compute the modulus of a planar configuration exactly, so long as its vertices lie in a number field. This allows a rigorous version of guess-and-check. After some educated experimentation, we propose two planar configurations, the \emph{bases}, with exact coordinates listed in Table \ref{tab:base}, and use these tools to confirm that they are exactly flat square and hexagonal tori, respectively (Proposition \ref{p:base}).

\item \textbf{The lift.}\  We \emph{lift} each base to a one-parameter family of embedded configurations, by raising each vertex vertically out of the plane at its own constant speed. The condition that every pair of lifted faces separates is a system of strict linear inequalities in the eight speeds, so the admissible speeds form an open cone, and we choose a rational point of it, which allows an exact certification of embeddedness (Proposition \ref{p:separation}). Flatness is lost and the modulus drifts as the vertices rise, but the third step restores both.

\item \textbf{The correction.}\  We now move the vertices horizontally, each at its fixed height, to undo that drift (Section \ref{sec:correction}). Flatness and the modulus are nine equations in the sixteen horizontal coordinates. The base satisfies them exactly, and their derivative there, exact in the field, has rank nine (Proposition \ref{p:jacobian}). So the implicit function theorem provides horizontal displacements of the vertices, continuous along the lift and vanishing at the base, that adjust the lifted torus at each small time so that it is exactly flat with exactly the right modulus. One neighborhood of the base in the horizontal coordinates lifts to embedded configurations at every height at once, and the displacements are continuous, so the corrected tori stay embedded, and are the \emph{exact} paper tori we seek.

\end{itemize}
Concretely, for each of the two tori we produce a path $q_t$ of embedded paper tori for $0 < t < \varepsilon$, on the valence-regular triangulation $T$ in the square case and on the second triangulation $T'$ in the hexagonal one. The limit $q_0$ is the base, an explicit planar configuration in $\R^2 \times \{0\}$, its eight vertices the points of Table \ref{tab:base}, exact in $\Q$ for the square torus and in $\Q(\sqrt3)$ for the hexagonal one, and as $t$ varies the points move continuously. For each vertex $v$,
\[ q_t(v) = q_0(v) + \bigl(x_v(t),\ y_v(t),\ \zeta_v\, t\bigr), \]
where $x_v$ and $y_v$ are continuous functions vanishing at $0$ and the $\zeta_v$ are explicit values in the same field, the speeds of the lift, printed in Proposition \ref{p:separation}. The continuous functions are not explicit. They come from the implicit function theorem.

Doyle and Schwartz reach their tori the same way, pushing off from configurations collapsed into a plane and correcting by the implicit function theorem \cite{doyle-schwartz}. Besides the dropped symmetry, two things differ in our approach. Their correction restores flatness and they recover moduli by a surjectivity argument, which yields an open set, while we pin the modulus and reach a prescribed point. And their path is engineered so that the cone angles are stationary to second order, since their embedding margin decays, while ours is a straight vertical motion at constant speeds, which is what gives us one neighborhood for every height. So we never compare rates, and continuity of the correction is all we use.

\subsection*{Acknowledgements}
The results were found using data that came out of code written jointly with Steve Trettel in early June 2026, as a gift for Richard Schwartz's birthday conference. It grew out of many conversations with Steve, from which I learned a great deal, and I am grateful to him for his generosity and insight throughout. I thank Richard Schwartz for many conversations and insights, and for his encouragement to write up this result. I thank Marit Bobb and Clara Winne, who folded the tori of Appendix \ref{app:fat} from paper when I could not.

\section{Paper tori and configurations}\label{sec:flat}

A \emph{flat torus} develops onto $\C$, with holonomy acting by translations through a lattice $\Lambda$. A positively oriented basis $\Lambda = \Z v_1 \oplus \Z v_2$ determines a point $\tau = v_2/v_1 \in \HH$. Changing the basis acts on $\tau$ by a linear fractional transformation in $\mathrm{PSL}(2,\Z)$, so its orbit $[\tau] \in \HH/\mathrm{PSL}(2,\Z)$ records the modulus. Let $\tauhat$ denote its representative in the closed fundamental domain $\mathcal{D} = \{\, z \in \HH : |z| \ge 1,\ |\Rep z| \le \tfrac12 \,\}$ (Figure \ref{fig:fd}), taking $\Rep\tauhat \ge 0$ when the representative is not unique (see \cite[\S12.2]{farb-margalit}). With these conventions the square torus is represented by $i$ and the hexagonal torus by $\rho = e^{i\pi/3}$, the two orbifold points of $\mathcal{D}$, their lattices carrying rotational symmetries of order four and six. The vertical rays originating at them trace the two excluded rays, the rectangular tori and the rhombic tori of aspect ratio at least $\sqrt3$.

\begin{figure}[!htbp]
\centering
\begin{tikzpicture}[scale=1.15]
\input{figs/s5-fd.tex}
\end{tikzpicture}
\caption{A fundamental domain $\mathcal{D}$ of the modular group, the arrows being the identifications that fold it into the modular surface. The construction of \cite{doyle-schwartz} realizes every modulus off the two excluded rays. The open arc between $i$ and $\rho$ is among the moduli they realize. This paper realizes the two orbifold points $i$ and $\rho = e^{i\pi/3}$ at its corners. The dashed parts are what our convention $\Rep\tauhat \ge 0$ leaves out.}\label{fig:fd}
\end{figure}

\subsection{Geometry of configuration space}\label{ss:geometry}

Fix a triangulation $T$ of the torus with vertex set $V$, and a \emph{marking} on it, an ordered pair of edge loops whose classes form a basis of $H_1(T;\Z)$, as in Figure \ref{fig:type}. A \emph{paper torus} is the image of a piecewise affine isometric embedding $\phi \colon T \to \R^3$ of a flat torus, the faces of the triangulation going to the pieces on which $\phi$ is affine, the notion of \cite{schwartz-bound, doyle-schwartz}. Equivalently, it is an embedded polyhedral surface in $\R^3$ whose intrinsic metric is a flat torus, and we say that flat torus is \emph{realized} by it. An Euler characteristic computation forces $|V| \ge 7$. The triangulation with $|V| = 7$ is unique and carries no paper torus \cite{schwartz-bound}. For $|V| = 8$, the case of this paper, $T$ has $24$ edges and $16$ faces, and up to combinatorial isomorphism there are exactly seven such $T$ \cite{sulanke-lutz, lutz-page}. A \emph{configuration} is a map $q \colon V \to \R^3$. With $T$ fixed it extends uniquely to a piecewise linear map of $T$ into space, affine on each face, and we write $q$ for that map too. The configuration is \emph{flat} if the cone angle at every vertex, the sum of the incident triangle angles measured in $\R^3$, equals $2\pi$. Write $\delta_v$ for the \emph{deficit} at $v$, that is, $2\pi$ minus the cone angle there. Each $\delta_v$ is $2\pi$ minus a sum of triangle angles, so it is a smooth function on the configurations with nondegenerate faces. The sixteen triangles carry total angle $16\pi$, so
\[
\sum_{v \in V} \delta_v \;=\; 8 \cdot 2\pi - 16\pi \;=\; 0
\]
for every configuration with nondegenerate faces, and flatness is the vanishing of any seven of the eight deficits. The intrinsic metric then has no cone points and the surface is a flat torus with a well-defined modulus.

\begin{figure}[!htbp]
\centering
\begin{tikzpicture}[scale=1.26]
\input{figs/universal-square.tex}
\end{tikzpicture}%
\hspace{0.5cm}%
\begin{tikzpicture}[scale=1.26]
\input{figs/universal-hex.tex}
\end{tikzpicture}
\caption{The two combinatorial triangulations through their universal covers. On the left $T$, whose cover is the regular $\{3,6\}$ triangulation of the plane. On the right the second triangulation $T'$, of degrees $(4,6,6,6,6,6,7,7)$. In each panel the shaded region is the net of Section \ref{ss:nets} drawn combinatorially, its faces without their true shapes, $16$ triangles and $8$ vertices labeled $0$ through $7$, and Figure \ref{fig:nets} draws the same net with the true shapes. The blue and red paths are the two marking loops fixed in Section \ref{sec:bases}.}\label{fig:type}
\end{figure}

The configurations form the space
\[
\mathcal{C} \;=\; (\R^3)^V \;\cong\; \R^{24},
\]
and we single out four subsets of it. The set $\mathcal{C}^\circ$ of configurations with nondegenerate faces is open. A configuration is \emph{embedded} when its piecewise linear map of $T$ into space is injective. Inside $\mathcal{C}^\circ$, the set $\mathcal{E}$ of embedded configurations is open as well, since embeddedness is an open condition on a configuration. The flat configurations form the \emph{flat locus} $\mathcal{F} \subset \mathcal{C}^\circ$. And the configurations with all vertices in $\R^2 \times \{0\}$ form the sixteen dimensional subspace $\mathcal{P}$ of planar configurations, none of them embedded. The universal cover of the intrinsic metric of a point of $\mathcal{F}$ is the Euclidean plane, each class of $H_1(T;\Z)$ acting on it by a translation, and this identifies $H_1(T;\Z)$ with the lattice $\Lambda$ of a flat torus $\R^2/\Lambda$, up to rotation and scaling. The marking becomes a basis $(v_1, v_2)$ of $\Lambda$, which we take positively oriented, and we write
\[
\tau \colon \mathcal{F} \longrightarrow \HH, \qquad \tau \;=\; v_2/v_1,
\]
the ratio being unchanged by rotation and scaling. Section \ref{ss:developing} computes $\tau$ exactly, by developing. The similarities of $\R^3$ (translation, rotation, and scaling) act on $\mathcal{C}$, preserving $\mathcal{C}^\circ$, $\mathcal{E}$, $\mathcal{F}$ and $\tau$, and this group has dimension seven, so every dimension count below includes those.

A point of $\mathcal{E} \cap \mathcal{F}$ is a paper torus with eight vertices. Schwartz's theorem \cite{schwartz-bound} says that $\mathcal{E} \cap \mathcal{F}$ is nonempty on the valence-regular triangulation, and near universality \cite{doyle-schwartz} says that every reduced modulus off the two excluded rays, the rectangular ray $\tauhat = iY$ with $Y \ge 1$ and the rhombic ray $\tauhat = \tfrac12 + iH$ with $H \ge \tfrac{\sqrt3}{2}$, can be achieved. Theorem \ref{t:main} produces points of $\mathcal{E} \cap \mathcal{F}$ with $\tauhat = i$ and $\tauhat = \rho$, one on each of the two triangulations of Section \ref{sec:bases}, and Theorem \ref{t:path} describes $\mathcal{F}$ and the fibers of $\tau$ near them. In these terms the plan of Section \ref{sec:intro} reads as follows. The proof finds its starting point in $\mathcal{P} \cap \mathcal{F} \cap \partial\mathcal{E}$, a planar flat configuration of the right modulus on the boundary of the embedded set, and moves from it into $\mathcal{E} \cap \mathcal{F}$ along a fiber of $\tau$. Theorem \ref{t:main} constructs an initial segment of such a path, Theorem \ref{t:path} shows that near its start the fibers are smooth manifolds of dimension fifteen, and Appendix \ref{app:fat} suggests, numerically, that the path continues far from the plane.

\begin{figure}[t]
\centering
\begin{tikzpicture}[
  panel/.style={inner sep=0pt, outer sep=0pt},
  flow/.style={-{Stealth[length=2.2mm, width=1.7mm]}, black!55, line width=0.6pt},
  flab/.style={font=\small, text=black!70, inner sep=2.5pt},
]
\node[panel] (tri) at (0,0)
  {\begin{tikzpicture}[scale=0.88]\input{figs/universal-square-wide.tex}\end{tikzpicture}};
\node[panel, anchor=west] (cfg) at ($(tri.east)+(2.3cm,0)$)
  {\includegraphics[width=4.4cm]{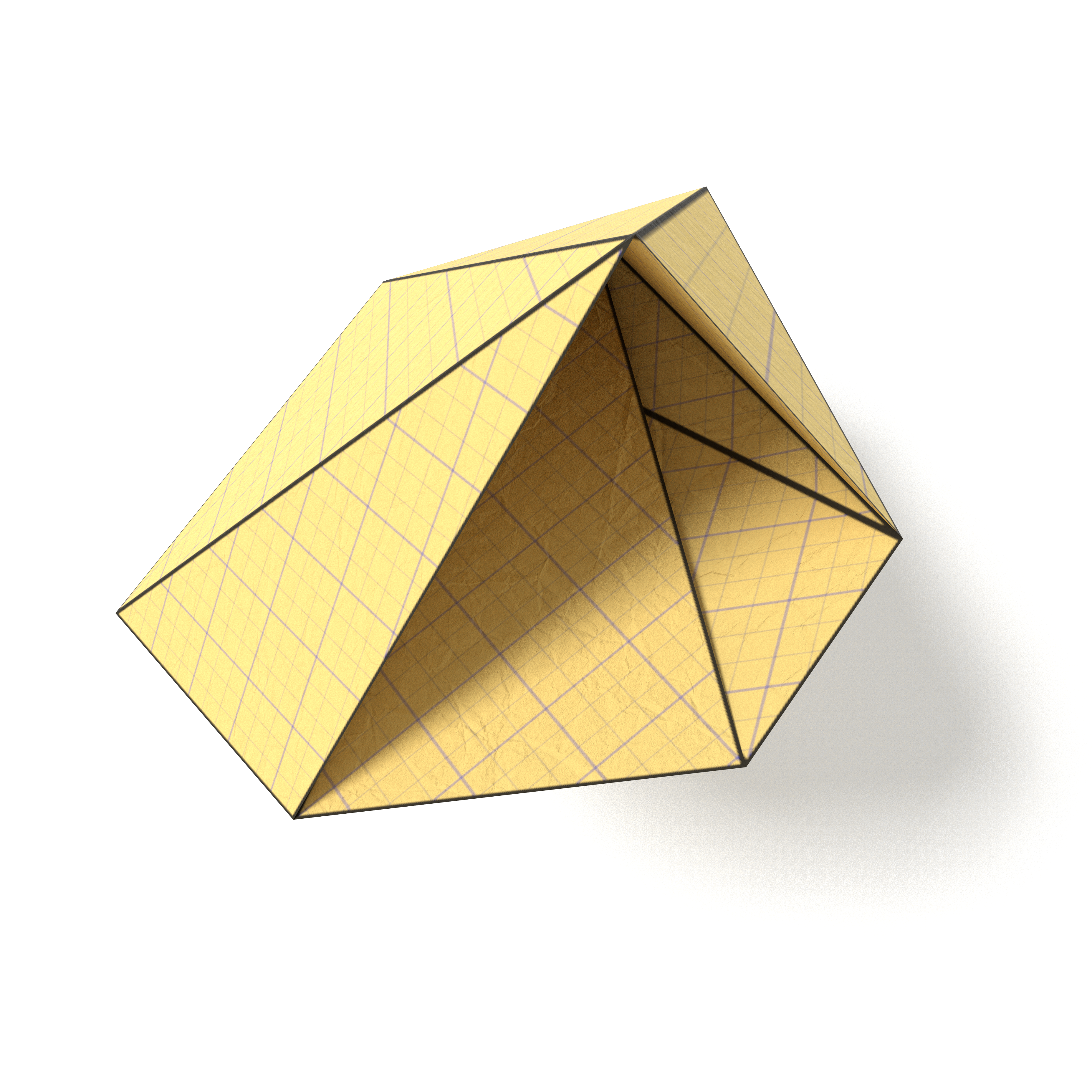}};
\node[panel, anchor=north] (net) at
  ($(tri.west)!0.5!(cfg.east) + (0,-3.55cm)$)
  {\begin{tikzpicture}[scale=1.45]\input{figs/net-pushed.tex}\end{tikzpicture}};
\draw[flow] (tri) -- (cfg) node[flab, midway, above] {configuration};
\draw[flow] (cfg.south) to[out=-90, in=60] node[flab, pos=0.55, above] {develop} (net.north);
\end{tikzpicture}
\caption{An overview of the three objects. Above, an $8$-vertex triangulation through its universal cover and a configuration of it in $\R^3$. Below, that configuration developed along the tree, its sixteen faces numbered $0$ to $15$. The configuration is embedded but not flat, and the four named corners show the notation $u_f(v)$ and where flatness fails: the developed edge vectors $u_{570}(7) - u_{570}(0)$ and $u_{207}(7) - u_{207}(0)$ have the same length but different directions. It is the square base of Section \ref{sec:bases}, lifted along $0.75\,\zeta$ and not yet corrected, see Sections \ref{sec:lift} and \ref{sec:correction}.}\label{fig:triple}
\end{figure}

\subsection{Developing a configuration}\label{ss:developing}

We need to verify two exact statements about a configuration, that it is flat and that its modulus is the one we want. The angle sums at the vertices decide the first, but they are a transcendental mess, sums of arccosines to be compared with exactly $2\pi$. They are also local, one number per vertex, and say nothing about the modulus, which is global. Developing the configuration into the plane treats both at once. Flatness and the modulus become exact statements about the holonomy of the development, and no angle is ever computed. Figure \ref{fig:triple} illustrates a non-flat development and Figure \ref{fig:nets} shows two flat ones.

A \emph{development} lays the sixteen faces of a given configuration into the plane. Place the first face as a positively oriented triangle similar to the face, scaled so that its first edge runs from $(0,0)$ to $(1,0)$, then place each further face across an edge it shares with a face already placed, again positively oriented and at the same scale. Doing this for all sixteen faces gives a \emph{developed configuration}. The sixteen images need not be disjoint. For example in Figure \ref{fig:triple} the faces $11$ and $12$ overlap.

The placement order determines a spanning tree of the dual graph, whose sixteen nodes are the faces and whose edges are the $24$ edges of $T$, the tree edges being the ones the faces were placed across. The tree glues the fifteen edges it uses and leaves the other nine unglued, and those nine are what decide flatness. Fixing the tree and the starting position fixes every quantity computed below. The development assigns to each face $f$ and each vertex $v$ of $f$ a point $u_f(v)$ of the plane (Figure \ref{fig:triple}), and the \emph{developed edge vector} of an edge $(a,b)$ read in a face $f$ that contains it is $u_f(b) - u_f(a)$.

The nine non-glued edges are the boundary edges of the shaded net and the interior dashed edges in Figure \ref{fig:type}. To show that a configuration is flat, it suffices to show that for each such edge $\{p,q\}$, shared by two faces $f$ and $g$, the developed edge vectors agree, i.e. that
\[
u_f(p) - u_f(q) = u_g(p) - u_g(q),
\]
which makes each of the nine gluings a translation. The difference of the two sides is the \emph{gluing vector} of the edge, and the configuration is flat exactly when all nine gluing vectors vanish.

Here is why. Going once around a vertex composes the transitions between the consecutive incident faces, the identity across a tree edge and a gluing across a non-tree edge, so the composition is a product of gluings and their inverses, and it is the holonomy of the development around the vertex. Its linear part is the rotation by the cone angle there. If every gluing is a translation, every such composition is a translation. Hence each cone angle is a positive multiple of $2\pi$ and the angles of the sixteen triangles, distributed among the eight vertices, sum to $16\pi$, so every cone angle is exactly $2\pi$ and the configuration is flat.

At a flat configuration the developed edge vector does not depend on which of the two faces reads it, the two copies of the edge differing by a translation. The two \emph{periods} $v_1$, $v_2$ of a configuration are the sums of the developed edge vectors along the two marking loops. Different choices of tree, and of the face in which each marking edge is read, give different smooth extensions of $\tau$ off the flat locus, but on the flat locus they all agree. Each extension is smooth wherever the sixteen faces are nondegenerate and $v_1 \ne 0$.

\subsection{Explicit computations for a configuration}\label{ss:exact}

To decide flatness and read off the modulus we have to actually compute the development. Section \ref{ss:developing} gave the construction, and here we look at the computation itself. It turns out to be elementary. A Euclidean triangle is determined up to congruence by its side lengths, so the development depends on the configuration only through the $24$ squared edge lengths $|q_a - q_b|^2$. Areas involve a square root of those, so we carry the sixteen doubled face areas $S_f$ alongside them, and call these forty numbers the \emph{geometric data} of the configuration. Rational, below, always means rational in them. Everything the development produces, the developed points, the gluing vectors, the periods and the modulus, is a rational function of the geometric data, and so are the coefficients of all their differentials and of the differentials of the angle deficits, although the deficits themselves are transcendental. So wherever these numbers are known exactly, everything else can be computed exactly. Section \ref{ss:planar} shows that for a planar configuration they lie in the field of the vertex coordinates, which is what makes the computations of Section \ref{sec:bases} exact.

Fix a configuration $q$ with nondegenerate faces. For a face $f = ABC$ let $S_f = S = 2\,\mathrm{Area}(ABC)$. At its corner $A$ write $\alpha = |AB|^2$, $\beta = |AC|^2$, $\gamma = |BC|^2$, and let
\[
p \;=\; \langle AB,\, AC\rangle \;=\; \tfrac12(\alpha + \beta - \gamma)
\]
be the inner product of the two edges at the corner, linear in the squared lengths. Since $|u \times v|^2 + \langle u, v\rangle^2 = |u|^2 |v|^2$ for any two vectors, $ S^2 \;=\; \alpha\beta - p^2 .$ Figure \ref{fig:corner} collects the notation on one face.

\begin{figure}[!htbp]
\centering
\begin{tikzpicture}
\input{figs/corner-data.tex}
\end{tikzpicture}
\caption{The data at the corner $A$ of a face $ABC$, the squared edge lengths $\alpha = |AB|^2$, $\beta = |AC|^2$ and $\gamma = |BC|^2$, the doubled area $S$, and the angle $\theta_A$. The perpendicular from $C$ meets $AB$ at distance $p/\sqrt\alpha$ from $A$ and has length $S/\sqrt\alpha$, which is $(p, S) = \sqrt{\alpha\beta}\,(\cos\theta_A,\ \sin\theta_A)$ read as a picture.}\label{fig:corner}
\end{figure}

\smallskip\noindent\textbf{The developed data, and its derivatives.}\ The whole development is built by repeating one step, a face with one edge already placed getting its third corner placed. So we write that step down, check that it is rational, and let induction do the rest. Take the face $ABC$ in its boundary order, and suppose the edge $AB$ has already been placed by the neighbour across it. Where the third corner goes is then decided by the squared lengths of its edges and its doubled area alone, namely
\[
C \;=\; A + \frac{p}{\alpha}\,(B - A) \;+\; \frac{S}{\alpha}\,(B - A)^{\perp},
\]
with $\alpha$, $p$ and $S$ read at the corner $A$ of $ABC$, and $\perp$ the rotation by a right angle. Indeed the edge from $A$ to $C$ has component $p/\sqrt\alpha$ along $AB$ and $S/\sqrt\alpha$ perpendicular to it, and $B - A$ has length $\sqrt\alpha$, which gives the two coefficients. Both are rational and dimensionless, so the formula also holds in the development, where every face appears at one common scale. The root face is placed with an edge from $(0,0)$ to $(1,0)$ and its third corner at $(p/\alpha,\ S/\alpha)$, and induction along the tree gives that every developed point is rational in $\alpha$, $p$ and $S$. The gluing vectors, the periods $v_1$, $v_2$ and the modulus $\tau = v_2 \bar v_1/|v_1|^2$ follow immediately.

A rational function of the squared edge lengths and the doubled face areas differentiates, by the quotient rule, into the same quantities together with the differentials of the areas, and these are rational expressions too. Indeed $S^2 = \alpha\beta - p^2$ is a polynomial in the squared edge lengths, so
\[
dS \;=\; \frac{d(S^2)}{2S} \;=\; \frac{d(\alpha\beta - p^2)}{2S} \;=\; \frac{\alpha\, d\beta + \beta\, d\alpha - 2p\, dp}{2S}.
\]
So the differentials of the developed points, the gluing vectors, the periods and the modulus are rational in the squared edge lengths and the doubled face areas, as forms in the differentials of the squared edge lengths.

\smallskip\noindent\textbf{The angles, and their derivatives.}\ The angle sum at a vertex is a sum of angles in the incident triangles, so it is enough to treat one triangle. At the corner $A$ of a face, $(p, S) = \sqrt{\alpha\beta}\,(\cos\theta_A,\ \sin\theta_A)$, so
\[
\theta_A \;=\; \arg(p + iS),
\]
and the corner angles, and with them the deficits $\delta_v = 2\pi - \sum \theta$, are transcendental, but their derivatives are rational expressions. Since $S > 0$ the point $p + iS$ stays in the upper half plane, where $\arg$ is smooth with $d\arg z = \Imp(dz/z)$, so
\[
d\theta_A \;=\; \Imp \frac{d(p + iS)}{p + iS} \;=\; \Imp \frac{(dp + i\,dS)(p - iS)}{p^2 + S^2} \;=\; \frac{p\,dS - S\,dp}{p^2 + S^2},
\]
and with $dS$ as above the coefficients on the right are rational in the same data, so $d\theta_A$, and with it each $d\delta_v = -\sum d\theta$, has rational coefficients as well. Unlike the developed data this did not come for free and took a computation, $\theta_A$ itself being transcendental. What makes it work is that the differential of an inverse trigonometric function is algebraic.

We record this for later use.

\begin{prop}\label{p:rational}
Let $q$ be a configuration with nondegenerate faces. The developed points, the gluing vectors, the periods $v_1$, $v_2$ and the modulus $\tau$ are rational functions of the geometric data of $q$, and so are the coefficients of their differentials, and of the differentials of the angle deficits $\delta_v$, taken with respect to the squared edge lengths. The deficits themselves are functions of the same data, and taken with respect to the vertex coordinates, all these differentials have coefficients rational in that data and the coordinates.
\end{prop}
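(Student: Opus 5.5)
The plan is an induction along the spanning tree of the dual graph, followed by the chain rule. Everything will be stated as membership in one field. Let $K$ be the field generated over $\Q$ by the geometric data of $q$: the $24$ squared edge lengths $|q_a-q_b|^2$ and the sixteen doubled areas $S_f$. The aim is to show that every listed quantity, and every coefficient of every listed differential, lies in $K$ as an explicit rational expression.

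\textbf{Step 1: the developed data.} I would first check that the corner quantities are in $K$. Each $\alpha,\beta,\gamma$ is a squared edge length, and $p=\tfrac12(\alpha+\beta-\gamma)$, so both are in $K$. Each $S$ is one of the data by definition. Nondegeneracy gives $\alpha>0$, so the root face, with corners $(0,0)$, $(1,0)$ and $(p/\alpha,S/\alpha)$, has coordinates in $K$. For the inductive step, the placement formula
\[
C=A+\tfrac{p}{\alpha}(B-A)+\tfrac{S}{\alpha}(B-A)^{\perp}
\]
expresses a new corner polynomially in already placed points, with coefficients in $K$. Since the tree reaches all sixteen faces, every $u_f(v)$ lies in $K^2$. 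Gluing vectors and periods are sums and differences of these points, so they lie in $K^2$ as well. For $\tau=v_2\bar v_1/|v_1|^2$ we need $v_1\ne0$, which I take as the standing condition of Section~\ref{ss:developing} wherever $\tau$ is discussed.

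\textbf{Step 2: differentials in the squared lengths.} Here I treat the $24$ squared lengths as the independent variables, with $S_f$ the positive root of $\alpha\beta-p^2$. Then $dS=(\alpha\,d\beta+\beta\,d\alpha-2p\,dp)/(2S)$ has coefficients in $K$ because $S\ne0$. Differentiating the recursion in Step 1 by the product and quotient rules shows, again by induction along the tree, that each $du_f(v)$ is a $K$-linear combination of the $d\alpha$'s. The same then holds for gluing vectors and periods, and for $\tau$ by the quotient rule. For the angles, $d\theta_A=(p\,dS-S\,dp)/(p^2+S^2)$ has denominator $\alpha\beta>0$, so $d\delta_v=-\sum d\theta$ has coefficients in $K$. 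The deficits themselves are $2\pi-\sum\arg(p+iS)$, which are functions of the data, though not rational ones.

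\textbf{Step 3: differentials in the vertex coordinates.} I would apply the chain rule. Since $d|q_a-q_b|^2=2\langle q_a-q_b,\,dq_a-dq_b\rangle$, the coefficients pick up only polynomial factors in the coordinates, giving coefficients rational in $K$ and the coordinates.

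\textbf{Main obstacle.} The delicate point is Step 2, because the $S_f$ are not independent of the squared lengths, and $dS$ has to be read through $S^2=\alpha\beta-p^2$ rather than as a free differential. Beyond that, the only care needed is checking that no denominator vanishes. The denominators that appear are $\alpha$, $S$, $p^2+S^2=\alpha\beta$, and $|v_1|^2$. The first three are nonzero by nondegeneracy, and $|v_1|^2$ is nonzero under the hypothesis $v_1\ne0$.
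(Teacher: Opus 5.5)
Your proposal is correct and follows the paper's own argument essentially step for step: induction along the dual tree via the placement formula $C=A+\tfrac{p}{\alpha}(B-A)+\tfrac{S}{\alpha}(B-A)^{\perp}$, then $dS$ read through $S^2=\alpha\beta-p^2$, then $d\theta_A=(p\,dS-S\,dp)/(p^2+S^2)$ for the deficits, and finally the chain rule $d|q_a-q_b|^2=2\langle q_a-q_b,\,dq_a-dq_b\rangle$ for the vertex coordinates. You also list the denominators $\alpha$, $S$, $\alpha\beta$ and $|v_1|^2$ explicitly and flag the standing assumption $v_1\ne0$, which the statement leaves implicit; this is slightly more careful than the paper's write-up.
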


The last claim is the chain rule. Each squared edge length is the polynomial $|q_a - q_b|^2$ in the vertex coordinates, so passing to them is the substitution $d|q_a - q_b|^2 = 2\langle q_a - q_b,\, dq_a - dq_b\rangle$, whose coefficients are linear in the coordinates. Section \ref{sec:correction} differentiates this way.

\subsection{Planar configurations}\label{ss:planar}

Our construction in Section \ref{sec:bases} begins with a configuration collapsed into the plane, and here we specialize the computations of Section \ref{ss:exact} to that case.

A configuration $Q$ is \emph{planar} if its vertices lie in $\R^2 \times \{0\}$, which we identify with $\R^2$, so that it is given by a map $V \to \R^2$. Fix the orientation of the torus, and write each face $f = (a,b,c)$ in its oriented boundary order. For a nondegenerate face the \emph{orientation sign} is
\[
\varepsilon_f \;=\; \operatorname{sign}\det(Q_b - Q_a,\ Q_c - Q_a) \;\in\; \{+1, -1\},
\]
so $\varepsilon_f = +1$ when the planar image of the face runs the same way round as its boundary cycle, and $\varepsilon_f = -1$ when it is folded over.

By Proposition \ref{p:rational} every quantity we need, and every coefficient of every differential we need, is a rational function of three kinds of data, the vertex coordinates, the squared edge lengths and the doubled face areas. The first two lie in the field of the vertex coordinates for any configuration, the squared edge lengths being polynomials in the coordinates. For a planar configuration, so do the areas. Everything then takes place in the field the vertex coordinates generate, which allows exact arithmetic when they lie in a number field.

\begin{lemma}[planar exactness]\label{l:planar}
Let $Q$ be a planar configuration with nondegenerate faces and vertex coordinates in a field $K \subset \R$. Then its doubled face areas lie in $K$, and consequently so does every quantity of Proposition \ref{p:rational}, values and coefficients of differentials alike.
\end{lemma}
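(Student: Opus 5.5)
The plan is to compute the doubled areas of a planar configuration directly from its coordinates, and then let Proposition \ref{p:rational} transport membership in $K$ to everything else.

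\textbf{The areas.} For a planar face $f = (a,b,c)$, the doubled area satisfies
\[
S_f = |\det(Q_b - Q_a,\ Q_c - Q_a)| = \varepsilon_f \det(Q_b - Q_a,\ Q_c - Q_a).
\]
The determinant is a polynomial with integer coefficients in the coordinates, so it lies in $K$. The sign $\varepsilon_f$ is $\pm 1$. Hence $S_f \in K$, and $S_f \neq 0$ because the face is nondegenerate. This formula should also be checked against the identity $S^2 = \alpha\beta - p^2$ of Section \ref{ss:exact}. In the plane, $|u \times v|$ is exactly $|\det(u,v)|$, so the positive square root chosen there is this determinant up to sign. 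The point to watch is that $S$ is the positive root, not the signed determinant. Since $\varepsilon_f$ is a fixed sign, that choice does not affect membership in $K$. It does matter later, since folded faces carry $\varepsilon_f = -1$ in their formulas.

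\textbf{The rest of the data.} Each squared edge length $|Q_a - Q_b|^2$ is a polynomial in the coordinates, so it lies in $K$. All forty numbers of the geometric data therefore lie in $K$, and so do the coordinates themselves.

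\textbf{Transfer via Proposition \ref{p:rational}.} That proposition expresses every quantity listed there as a rational function, with rational coefficients, of the geometric data. For differentials taken in the vertex coordinates, it also allows the coordinates as inputs. A field is closed under the field operations, so each value lies in $K$ as long as no denominator vanishes. The one step needing care is to confirm that the denominators are the ones that are nonzero at a nondegenerate configuration:
\begin{itemize}
\item the $\alpha$ in the placement step, a squared edge length, which is positive;
\item the $S$ in $dS = d(S^2)/2S$, which is positive;
\item the $p^2 + S^2 = \alpha\beta$ in $d\theta_A$, which is positive;
\item the $|v_1|^2$ in $\tau$, which is nonzero by the standing assumption $v_1 \ne 0$ under which $\tau$ is defined.
\end{itemize}
With these checked, the developed points, gluing vectors, periods, $\tau$, and the coefficients of every differential, including those of the $d\delta_v$, all lie in $K$.
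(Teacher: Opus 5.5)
Your proposal is correct and is essentially the paper's proof: the paper also uses $S_f^2 = |u|^2|v|^2 - \langle u,v\rangle^2 = \det(u,v)^2$ to get $S_f = \varepsilon_f \det(u,v) \in K$, and then passes everything else to Proposition \ref{p:rational}, adding only that the heights of $Q$ are zero, so the coordinate inputs lie in $K$ too. Your explicit check that the denominators $\alpha$, $S$, $p^2 + S^2 = \alpha\beta$ and $|v_1|^2$ are nonzero is a detail the paper leaves implicit, and it is a useful addition.
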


\begin{proof}
For a face $f = (a,b,c)$ set $u = Q_b - Q_a$ and $v = Q_c - Q_a$. Since the face is planar,
\[
S_f^2 \;=\; |u|^2 |v|^2 - \langle u, v\rangle^2 \;=\; \det(u,v)^2 ,
\]
so
\[
S_f \;=\; |\det(u,v)| \;=\; \varepsilon_f \det(u,v) \;\in\; K .
\]
The rest is Proposition \ref{p:rational}. Everything it lists is rational in the coordinates, the squared edge lengths and the $S_f$, all of which now lie in $K$, the heights of $Q$ being zero.
\end{proof}

This exactness is used at three points of the argument. In Section \ref{sec:bases} we compute the developed vertices, the nine gluing vectors and the two periods exactly, and confirm that the two planar configurations are flat with the moduli we claim. In Section \ref{sec:lift} we check a system of inequalities exactly, and conclude that a lift is embedded. In Section \ref{sec:correction} we compute the $144$ entries of a Jacobian exactly, and conclude that the differential of flatness and modulus is nonsingular, which is what lets the implicit function theorem run. Each is a finite computation by addition, multiplication and division in the field of the vertex coordinates.

The deficits themselves are transcendental and are never computed. Flatness is checked through the development instead, where the nine gluing vectors vanish exactly when the configuration is flat (Section \ref{ss:developing}). The Jacobian of Section \ref{sec:correction} does differentiate the deficits rather than the gluing vectors, out of convenience. Flatness is the vanishing of seven of the eight deficits, so the deficits give seven independent equations directly, while seven independent ones would first have to be selected among the eighteen redundant gluing equations.

The lemma restricts which flat tori a planar configuration can realize at all.

\begin{cor}\label{c:field}
Let $Q$ be a planar flat configuration with vertex coordinates in a field $K \subset \R$, and let $\tau$ be its modulus. Then $\Q(\Rep\tau,\, \Imp\tau) \subseteq K$, and this field does not depend on the marking. In particular the square torus imposes no restriction on $K$, while a planar configuration realizing the hexagonal torus has $\sqrt3 \in K$.
\end{cor}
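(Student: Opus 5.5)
The plan is to apply Lemma \ref{l:planar} to the periods directly. Since $Q$ is planar with nondegenerate faces and coordinates in $K$, every developed point, and so every developed edge vector, lies in $K^2$, which we identify with $K + iK \subset \C$. The periods $v_1, v_2$ are sums of developed edge vectors along the marking loops, so $v_1, v_2 \in K + iK$. Writing $v_1 = a + ib$ and $v_2 = c + id$ with $a,b,c,d \in K$, we get
\[
\tau \;=\; \frac{v_2 \bar v_1}{|v_1|^2} \;=\; \frac{(ac + bd) + i(ad - bc)}{a^2 + b^2}.
\]
Here $a^2+b^2 \ne 0$ because $v_1 \ne 0$ for a flat torus. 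Hence $\Rep\tau, \Imp\tau \in K$, and so $\Q(\Rep\tau, \Imp\tau) \subseteq K$. Flatness is what makes this $\tau$ the true modulus, independent of the tree and of the faces in which the marking edges are read (Section \ref{ss:developing}). This handles the first claim for every choice of marking.

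For independence of the marking, a change of basis replaces $\tau$ by $\tau' = (a\tau + b)/(c\tau + d)$ with integer entries, and swapping orientation conventions at most conjugates. Writing $\tau = x + iy$, we have $\tau' = (a\tau+b)(c\bar\tau+d)/|c\tau+d|^2$. Expanding numerator and denominator shows that $\Rep\tau'$ and $\Imp\tau' = y/|c\tau+d|^2$ are rational functions with rational coefficients in $x$ and $y$. So $\Q(\Rep\tau', \Imp\tau') \subseteq \Q(\Rep\tau, \Imp\tau)$, and the inverse transformation gives the reverse inclusion. The field therefore depends only on the $\mathrm{PSL}(2,\Z)$-orbit. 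In particular it can be computed from $\tauhat$.

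The two special cases then follow. For the square torus $\tauhat = i$, the field is $\Q(0,1) = \Q$, which is contained in every $K \subset \R$, so there is no restriction. For the hexagonal torus $\tauhat = \rho = \tfrac12 + \tfrac{\sqrt3}{2} i$, the field is $\Q(\tfrac12, \tfrac{\sqrt3}{2}) = \Q(\sqrt3)$, so $\sqrt3 \in K$.

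No step here is a serious obstacle. The only point that needs care is the orientation convention: a planar configuration with folded faces must still yield $\tau$ in $\HH$ and not its conjugate. This is harmless, because conjugation negates $\Imp\tau$ and leaves the generated field unchanged.
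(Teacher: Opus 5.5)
Your proposal is correct and follows the paper's proof. You get $\Rep\tau, \Imp\tau \in K$ from Lemma \ref{l:planar}, unpacking it one step further through the periods and $\tau = v_2\bar v_1/|v_1|^2$ where the paper simply cites the lemma. You then show the field is unchanged under a change of marking by the same explicit rational formulas plus the inverse transformation, and evaluate at $i$ and $\rho$, exactly as the paper does; your conjugation remark is harmless extra care, since $\Q(x,y) = \Q(x,-y)$.
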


\begin{proof}
By Lemma \ref{l:planar} both coordinates of $\tau$ lie in $K$. Changing the marking replaces $\tau$ by $(a\tau + b)/(c\tau + d)$ with integer coefficients and $ad - bc = 1$, and writing $\tau = x + iy$,
\[
\Rep\frac{a\tau + b}{c\tau + d} = \frac{ac(x^2 + y^2) + (ad + bc)x + bd}{(cx + d)^2 + c^2y^2}, \qquad
\Imp\frac{a\tau + b}{c\tau + d} = \frac{y}{(cx + d)^2 + c^2y^2},
\]
so both lie in $\Q(x, y)$. The inverse change of marking gives the reverse inclusion, so the field is unchanged. It is $\Q$ at $\tau = i$, and $\Q(\tfrac12, \tfrac{\sqrt3}{2}) = \Q(\sqrt3)$ at $\tau = \rho$.
\end{proof}

\section{The base}\label{sec:bases}

This section is the first of the three steps of Section \ref{sec:intro}. We seek a starting point in $\mathcal{P} \cap \mathcal{F} \cap \partial\mathcal{E}$ for each torus: a configuration folded entirely into the plane, exactly flat and with exactly the desired modulus. The two bases below were discovered numerically before they were found exactly. We first searched directly for embedded $8$-vertex configurations of the square and hexagonal moduli, flat to machine precision. Convinced by these numerics that such tori existed, but left with only floating point objects, we looked for a route to an exact construction. Following the strategy of Doyle and Schwartz, we tried to collapse the configurations into the plane, where Section \ref{ss:planar} makes exact arithmetic available.

We numerically flowed the configurations toward the plane while keeping flatness and the modulus pinned and preventing the faces from crossing. As the configurations collapsed onto the plane, simple relations emerged among their vertices, with collections lying non-generically on lines, rectangles and circles, the same relations for every limit sharing a pattern of orientation signs (Section \ref{ss:planar}). These suggested exact geometric constraints that we could impose in place of the numerical conditions. Solving the resulting equations produced two four-parameter rational families of folded flat tori, one sweeping the rectangular ray $\tauhat = iY$, $Y \ge 1$, and one the rhombic ray $\tauhat = \tfrac12 + iH$, $H \ge \tfrac{\sqrt3}{2}$, the subject of a sequel, and the two planar configurations below are their corner members, with coordinates in $\Q$ in the square case and in $\Q(\sqrt3)$ in the hexagonal case. These are also the smallest fields one could hope for. By Corollary \ref{c:field}, a planar square base may have coordinates in $\Q$, while every planar configuration realizing the hexagonal torus must have $\sqrt3$ in its coordinate field.

From this point on, the numerical search plays no role. The proof needs only the coordinates of Table \ref{tab:base} and the two directions of Section \ref{sec:lift}, nothing else. We simply take the two configurations as given and verify in exact arithmetic that their faces are nondegenerate, that they are flat, and that their marked moduli are exactly $i$ and $\rho$, respectively.

Throughout, a triangulation is presented by its face list, with vertex set $V$, edge set $E$, and face set $F$, faces written as positively ordered triples. One can see in Figure \ref{fig:type} that the two printed lists triangulate the torus, each as a net of sixteen triangles with its boundary glued in pairs, and Figure \ref{fig:nets} shows the same with the true shapes of the faces from our exact planar configurations.

\subsection{The square base}

The triangulation $T$ has vertex set $V = \{0, \dots, 7\}$ and the sixteen faces
\[
\begin{aligned}
&356,\ 325,\ 364,\ 302,\ 341,\ 310,\ 506,\ 524,\\
&547,\ 570,\ 674,\ 601,\ 617,\ 214,\ 207,\ 271,
\end{aligned}
\]
written as positive boundary cycles, in the fixed order used by every table and script, its $24$ edges being the pairs of vertices that occur in them. We mark it by the loops $4 \to 2 \to 3 \to 4$ and $5 \to 2 \to 0 \to 5$, in that order. They meet in the single vertex $2$, as Figure \ref{fig:type} shows, so their classes form a basis of $H_1(T;\Z)$. Among the seven combinatorial types of $8$-vertex torus triangulation \cite{sulanke-lutz, lutz-page} it is the one in which every vertex has valence $6$, the type of Schwartz's example \cite{schwartz-bound}.

The square base is the planar configuration $Q^0 \colon V \to \Q^2$ of Table \ref{tab:base}, read as a configuration in $\R^3$ with all $z$ coordinates zero, and drawn on the left in Figure \ref{fig:base}. Its sixteen faces are nondegenerate. Their orientation signs (Section \ref{ss:planar}), in the face order above, are
\[
\texttt{+ + - + + - - + - + - + + - - -}
\]
so eight of them are folded over.

\subsection{The hexagonal base}

The triangulation $T'$ has vertex set $\{0, \dots, 7\}$, the sixteen faces
\[
\begin{aligned}
&012,\ 031,\ 024,\ 043,\ 152,\ 136,\ 145,\ 174,\\
&167,\ 264,\ 257,\ 276,\ 347,\ 356,\ 375,\ 465,
\end{aligned}
\]
written as positive boundary cycles. We mark it by the loops $1 \to 4 \to 0 \to 1$ and $1 \to 5 \to 6 \to 1$, in that order, meeting in the single vertex $1$, so their classes form a basis of $H_1(T';\Z)$. The vertex degrees are $(4, 6, 6, 6, 6, 6, 7, 7)$, the degree-$7$ vertices being $1$ and $4$ and the degree-$4$ vertex $0$. Its vertex cycles are now of lengths $(4, 7, 6, 6, 7, 6, 6, 6)$ in vertex order.

The hexagonal base is the planar configuration $P^0$ of Table \ref{tab:base}, again with heights zero, and drawn on the right of Figure \ref{fig:base}. Every number in it lies in $\Q(\sqrt3)$, written $a + b\sqrt3$ with $a, b \in \Q$, and every claim we make about it is decided by exact arithmetic in this field. Its sixteen faces are nondegenerate. Their orientation signs, in the face order above, are
\[
\texttt{- + + - - - - - + - + + + + - +}
\]
so here too eight are folded over.

The vertices of both bases satisfy the exact relations that the collapse suggested, drawn on them in Figure \ref{fig:base}. On the square base the vertices $0$, $2$, $4$ are collinear, the straight crease, the vertices $0$, $4$, $7$, $6$ form a rectangle, and $5$ lies on its circumscribed circle. The hexagonal base carries concyclicities instead, the five vertices $0$, $2$, $4$, $5$, $6$ on one circle and the quadruples $0$, $1$, $3$, $6$ and $3$, $4$, $5$, $7$ on two more. They play no role in the proof.

\begin{table}[!htbp]
\centering
\small
\setlength{\tabcolsep}{5pt}
\begin{tabular}{@{}c l l@{}}
\toprule
$v$ & $Q^0_v$ & $P^0_v$\\
\midrule
$0$ & $(0,\ 0)$ & $(0,\ 0)$\\
$1$ & $(\tfrac{73}{60},\ \tfrac{11}{10})$ & $\tfrac{1}{8073677900}\,(896842303 + 729553300\sqrt3,\ \ 1029317275 + 568256878\sqrt3)$\\
$2$ & $(\tfrac12,\ 0)$ & $\tfrac{1}{47100}\,(-2625 + 7310\sqrt3,\ \ 8925 - 24854\sqrt3)$\\
$3$ & $(-\tfrac{15977}{50320},\ \tfrac{42471}{50320})$ & $\tfrac{1}{10525}\,(3789,\ \ 595 - 4713\sqrt3)$\\
$4$ & $(1,\ 0)$ & $(1,\ 0)$\\
$5$ & $(\tfrac{35937}{25160},\ \tfrac{27357}{25160})$ & $\tfrac{1}{200}\,(64 + 25\sqrt3,\ \ 75 - 136\sqrt3)$\\
$6$ & $(0,\ \tfrac{33}{20})$ & $\tfrac{1}{200}\,(-36 + 25\sqrt3,\ \ 75 - 36\sqrt3)$\\
$7$ & $(1,\ \tfrac{33}{20})$ & $\tfrac{1}{109900}\,(279568 - 117825\sqrt3,\ \ 264900 - 143557\sqrt3)$\\
\bottomrule
\end{tabular}
\caption{The two folded bases, the eight planar vertex positions of each, $Q^0$ exact in $\Q$ and $P^0$ exact in $\Q(\sqrt3)$, the hexagonal entries each scaled by one common denominator.}\label{tab:base}
\end{table}

\begin{figure}[!htbp]
\centering
\begin{tikzpicture}
\input{figs/fold-relations-square-allblue.tex}
\end{tikzpicture}
\qquad
\begin{tikzpicture}
\input{figs/fold-relations-hex-allblue.tex}
\end{tikzpicture}
\caption{The two folded bases, the square base $Q^0$ on the left and the hexagonal base $P^0$ on the right, with the relations of the text drawn on them. On the left the line through the vertices $0$, $2$, $4$ and the rectangle $0$, $4$, $7$, $6$ are green and the circumscribed circle of that rectangle is purple, with the vertex $5$ on it. On the right the three circles are purple, the first through $0$, $2$, $4$, $5$, $6$ and the other two through $0$, $1$, $3$, $6$ and through $3$, $4$, $5$, $7$.}\label{fig:base}
\end{figure}

\subsection{Unfolding the planar configurations}\label{ss:nets}

\begin{prop}\label{p:base}
The square base $Q^0$ and the hexagonal base $P^0$ are flat, with periods satisfying $v_2 = i\,v_1$ on $Q^0$ and $v_2 = \rho\,v_1$ on $P^0$, so the intrinsic metric of each is that of the square, respectively the hexagonal, torus up to scaling.
\end{prop}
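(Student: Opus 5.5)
The proof is a direct exact computation, following the recipe of Sections \ref{ss:developing} and \ref{ss:exact}, which Lemma \ref{l:planar} places entirely in $\Q$ for $Q^0$ and in $\Q(\sqrt3)$ for $P^0$.

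\textbf{Setup.} For each base we first fix a spanning tree of the dual graph, namely the net of Figure \ref{fig:type}, and a root face. We then confirm that all sixteen faces are nondegenerate by computing $\det(Q_b - Q_a, Q_c - Q_a)$ exactly. This also recovers the orientation signs $\varepsilon_f$ printed above, and with them the doubled areas $S_f = \varepsilon_f \det(\cdot,\cdot)$. Together with the $24$ squared edge lengths these give the geometric data in the field $K$.

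\textbf{Development and flatness.} We develop face by face along the tree. The root face is placed with its first edge from $(0,0)$ to $(1,0)$, and each further face has its third corner placed by
\[
C = A + \tfrac{p}{\alpha}(B-A) + \tfrac{S}{\alpha}(B-A)^\perp .
\]
Every $u_f(v)$ is then an explicit element of $K^2$. For each of the nine non-tree edges $\{p,q\}$, shared by faces $f$ and $g$, we form the gluing vector $u_f(p)-u_f(q)-u_g(p)-u_g(q)$ and check that it is exactly zero. By the holonomy argument of Section \ref{ss:developing}, this shows that every gluing is a translation and every cone angle equals $2\pi$, so each base is flat.

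\textbf{Periods.} With flatness established, we compute the two periods as sums of developed edge vectors along the marking loops: $4\to2\to3\to4$ and $5\to2\to0\to5$ for $T$, and $1\to4\to0\to1$ and $1\to5\to6\to1$ for $T'$. Each edge may be read in any incident face, since by flatness the choice does not matter. We then verify $v_2 = i v_1$ in $\Q^2$ and $v_2 = \rho v_1$ in $\Q(\sqrt3)^2$, with $\rho = \tfrac12 + \tfrac{\sqrt3}{2}i$.

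\textbf{Identifying the torus.} The pair $(v_1,v_2)$ is a positively oriented basis of the holonomy lattice $\Lambda$, so $\tau = i$, respectively $\tau = \rho$. Both already lie in $\mathcal D$, hence $\tauhat = i$ and $\tauhat = \rho$. The intrinsic metric is therefore $\C/\Lambda$ with $\Lambda = v_1(\Z + \tau\Z)$, which is the square or hexagonal torus up to similarity.

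\textbf{Main obstacle.} The hard part is bookkeeping and certification, not ideas. The hexagonal development involves nested rational expressions in $\Q(\sqrt3)$ with large denominators, so we would carry it out in exact arithmetic (rational pairs $a + b\sqrt3$) in a computer algebra script and print intermediate values in a table. One subtlety needs care. In folded faces the developed orientation must follow the intrinsic boundary order, not the planar image. This is exactly why the formula uses $S_f>0$ rather than the signed determinant, and using the signed value by mistake would produce spurious nonzero gluing vectors. A consistency check is available as well: any choice of tree must give the same periods.
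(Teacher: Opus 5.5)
Your proposal matches the paper's proof: develop both bases exactly along the breadth-first dual tree, check that the nine non-tree gluings are translations so that the holonomy argument of Section \ref{ss:developing} gives flatness, then sum developed edge vectors along the marking loops and verify $v_2=i\,v_1$, respectively $v_2=\rho\,v_1$, exactly in $\Q$ and $\Q(\sqrt3)$. Two small slips: the gluing vector should read $u_f(p)-u_f(q)-u_g(p)+u_g(q)$, and it is worth recording $v_1\neq 0$ explicitly, as the paper does, since that is what makes $(v_1,v_2)$ positively oriented via $\det(v_1,i\,v_1)=|v_1|^2>0$.
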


The proof is a finite computation on the printed data, checking three things, that the nine gluings are translations, that $v_1 \ne 0$, and that the periods satisfy $v_2 = i\,v_1$, respectively $v_2 = \rho\,v_1$. Both bases have nondegenerate faces, with the orientation signs printed above, so the development applies, and every quantity it produces lies in the field of the base by Lemma \ref{l:planar}, so every comparison is exact (Appendix \ref{app:verify}). The rest of this subsection is that computation.

We unfold both configurations along the same tree. The first face $(a,b,c)$ of the printed list goes into the plane first, normalized by $u(a) = (0,0)$ and $u(b) = (1,0)$, and after each face its three neighbors follow, in the order the shared edges appear in its boundary cycle, skipping ones that are already placed. This is the breadth-first order on the dual graph rooted at the first face. The face numbers of Figures \ref{fig:type} and \ref{fig:nets} are the positions in this order, so the order can be read off either figure. Each placement is simple for a planar configuration. The new face already lies in the plane, and it goes across the shared edge by nothing at all when its orientation sign agrees with its neighbor's, and by the reflection in the line of that edge when they differ.

\begin{figure}[!htbp]
\centering
\begin{tikzpicture}[scale=1.75]
\input{figs/net-square.tex}
\end{tikzpicture}%
\hspace{0.5cm}%
\begin{tikzpicture}[scale=1.75]
\input{figs/net-hex.tex}
\end{tikzpicture}
\caption{The two unfolded planar configurations, sixteen faces unfolded along the dual tree, the square base left and the hexagonal base right. Thin interior segments are the fifteen tree edges, the heavy outline is the free boundary, and dashed segments are non-tree edges along which the net closes back up. The blue and red arrows are the developed edge vectors of the two marking loops, each triple summing to its period.}\label{fig:nets}
\end{figure}

The forty-eight placed corners fall on fourteen points of the plane on the square side and on fifteen on the hexagonal one, the corners of the two nets, and Tables \ref{tab:netsq} and \ref{tab:nethex} give them exactly, each point named $u_f(v)$ by the vertex $v$ it is a copy of and the face $f$ that places it first.

\begin{table}[!htbp]
\centering
\small
\setlength{\tabcolsep}{6pt}
\begin{tabular}{@{}l l@{}}
\toprule
point & position\\
\midrule
$u_{356}(3)$ & $(0,\ 0)$\\
$u_{356}(5)$ & $(1,\ 0)$\\
$u_{356}(6)$ & $\tfrac{1}{6254125}\,(1510441,\ 2676762)$\\
$u_{325}(2)$ & $\tfrac{1}{6254125}\,(2459423,\ -3366264)$\\
$u_{506}(0)$ & $\tfrac{1}{6254125}\,(6055927,\ 6365964)$\\
$u_{364}(4)$ & $\tfrac{1}{250165}\,(-210837,\ 69696)$\\
$u_{302}(0)$ & $\tfrac{1}{6254125}\,(702403,\ -3121404)$\\
$u_{524}(4)$ & $\tfrac{1}{6254125}\,(4216443,\ -3611124)$\\
$u_{570}(7)$ & $\tfrac{1}{6254125}\,(8761929,\ 78078)$\\
$u_{601}(1)$ & $\tfrac{1}{18762375}\,(915847,\ 21774654)$\\
$u_{674}(7)$ & $\tfrac{1}{6254125}\,(-725439,\ 5431602)$\\
$u_{341}(1)$ & $\tfrac{1}{750495}\,(-605789,\ -267498)$\\
$u_{207}(7)$ & $\tfrac{1}{1250825}\,(681681,\ -1881858)$\\
$u_{214}(1)$ & $\tfrac{1}{18762375}\,(13317379,\ -22748022)$\\
\bottomrule
\end{tabular}
\caption{The square net of Figure \ref{fig:nets}. Its sixteen faces put their forty-eight corners on fourteen points of the plane, each named here by the face that places it first and listed in the order the unfolding places them, exact in $\Q$ and with the root edge of length one.}\label{tab:netsq}
\end{table}

\begin{table}[!htbp]
\centering
\footnotesize
\setlength{\tabcolsep}{6pt}
\begin{tabular}{@{}l c l@{}}
\toprule
point & & position\\
\midrule
$u_{012}(0)$ & $x$ & $0$\\
 & $y$ & $0$\\
$u_{012}(1)$ & $x$ & $1$\\
 & $y$ & $0$\\
$u_{012}(2)$ & $x$ & $\tfrac{1}{555694913304951}\,(2515566967447176 - 1748881409442255\sqrt3)$\\
 & $y$ & $\tfrac{1}{555694913304951}\,(6921139243529985 - 3403655728740217\sqrt3)$\\
$u_{031}(3)$ & $x$ & $\tfrac{1}{2118893}\,(-6578800 + 3039430\sqrt3)$\\
 & $y$ & $\tfrac{1}{2118893}\,(-22367920 + 10334062\sqrt3)$\\
$u_{152}(5)$ & $x$ & $\tfrac{1}{740926551073268}\,(544497372063943 - 495475141869625\sqrt3)$\\
 & $y$ & $\tfrac{1}{740926551073268}\,(13541673711173875 - 6703740324583475\sqrt3)$\\
$u_{024}(4)$ & $x$ & $\tfrac{1}{185231637768317}\,(1305770380419850 - 1041620420752000\sqrt3)$\\
 & $y$ & $\tfrac{1}{185231637768317}\,(-1339666698144250 + 721657938141100\sqrt3)$\\
$u_{136}(6)$ & $x$ & $\tfrac{1}{740926551073268}\,(-5077052117214257 + 3472341516107375\sqrt3)$\\
 & $y$ & $\tfrac{1}{740926551073268}\,(-2147368694014125 + 911781885620925\sqrt3)$\\
$u_{145}(4)$ & $x$ & $\tfrac{1}{185231637768317}\,(-1823142417564850 + 1266463823447500\sqrt3)$\\
 & $y$ & $\tfrac{1}{185231637768317}\,(419398228148750 - 42809631023600\sqrt3)$\\
$u_{257}(7)$ & $x$ & $\tfrac{1}{1296621464378219}\,(-33749292045964731 + 17441837192871625\sqrt3)$\\
 & $y$ & $\tfrac{1}{1296621464378219}\,(92012050691116625 - 50224279449101350\sqrt3)$\\
$u_{264}(6)$ & $x$ & $\tfrac{1}{740926551073268}\,(5767578893743343 - 4661956824877625\sqrt3)$\\
 & $y$ & $\tfrac{1}{740926551073268}\,(8183006918596875 - 3817108572019075\sqrt3)$\\
$u_{347}(7)$ & $x$ & $\tfrac{1}{1296621464378219}\,(-30825006729247631 + 15520269580198875\sqrt3)$\\
 & $y$ & $\tfrac{1}{1296621464378219}\,(61620438884996375 - 36597448164078450\sqrt3)$\\
$u_{356}(5)$ & $x$ & $\tfrac{1}{740926551073268}\,(2215517553045143 - 1593513777682625\sqrt3)$\\
 & $y$ & $\tfrac{1}{740926551073268}\,(-3824961606609125 + 1083020409715325\sqrt3)$\\
$u_{167}(7)$ & $x$ & $\tfrac{1}{1296621464378219}\,(-52727396315140531 + 31676859289595375\sqrt3)$\\
 & $y$ & $\tfrac{1}{1296621464378219}\,(73933893369047375 - 41948721148231350\sqrt3)$\\
$u_{465}(6)$ & $x$ & $\tfrac{1}{740926551073268}\,(-6748072298195457 + 4570380151920375\sqrt3)$\\
 & $y$ & $\tfrac{1}{740926551073268}\,(15219266623768875 - 6874978848677875\sqrt3)$\\
$u_{375}(3)$ & $x$ & $\tfrac{1}{77982519500461457}\,(-417997246102752500 + 227430000312057320\sqrt3)$\\
 & $y$ & $\tfrac{1}{77982519500461457}\,(1004622302211424670 - 439227692049635862\sqrt3)$\\
\bottomrule
\end{tabular}
\caption{The hexagonal net of Figure \ref{fig:nets}, its fifteen points read the same way, exact in $\Q(\sqrt3)$, the two coordinates of a point on consecutive lines and each point scaled by one common denominator.}\label{tab:nethex}
\end{table}

\begin{proof}[Proof of Proposition \ref{p:base}]
The nine non-glued edges are each placed twice in Tables \ref{tab:netsq} and \ref{tab:nethex}, and the two copies differ by a translation. Four of the nine translations vanish on the square side and three on the hexagonal one, the dashed edges of Figure \ref{fig:nets}. The edge $(0,7)$ of the left unfolding in Figure \ref{fig:nets}, for example, is placed by the faces $570$ and $207$, and the second copy is the first translated by $v_2$. Once the nine gluings are translations the two copies of a free edge agree, so summing the developed edge vectors along the two marking loops gives the same $v_1$ and $v_2$ whichever copy is read, and they come out as $v_1 \ne 0$ and $v_2 = i\,v_1$ on $T$, $v_2 = \rho\,v_1$ on $T'$, exactly, in $\Q$ and in $\Q(\sqrt3)$. That also makes $\det(v_1, v_2) > 0$ automatic. The nine translations make every cone angle exactly $2\pi$, by Section \ref{ss:developing}.
\end{proof}

\section{The lift}\label{sec:lift}

Section \ref{sec:bases} leaves us with two exactly flat configurations of exactly the right moduli, collapsed into the plane and hence not embedded. This section shows that they lie on the boundary of the embedded configurations. Lifting the vertices vertically out of the plane, each at its own constant speed, makes them embedded, for every choice of a positive time.

Throughout this section $Q \colon V \to \R^2$ is a planar configuration with nondegenerate faces and $t > 0$. The lift of $Q$ along a direction $\zeta \in \Q^V$ at time $t$ places vertex $v$ at $(Q_v,\ t\zeta_v) \in \R^3$, so $\zeta_v$ is the speed of vertex $v$, upward or downward by its sign, and $\zeta$ collects the eight speeds. The two directions $\zeta$ used for the bases are printed in Proposition \ref{p:separation}.

The trick is to choose the speeds $\zeta$ so that the lift is actually embedded, and it suffices to do so at one time, the lift being a family of affine stretches. Because we lift vertically, each lifted face is the graph of an affine height function over the planar triangle beneath it, and two faces meet exactly where their heights agree, over the region beneath both. Embeddedness thus becomes a pairwise condition on differences of height functions. Each may vanish where its two faces share a vertex or an edge, and must vanish nowhere else. Such a difference is affine, so its sign is decided at the corners of a convex polygon, and the condition comes down to finitely many exact sign evaluations.

The \emph{shadow} of a face $F = (a,b,c)$ is the planar triangle $Q(F)$ that the lifted face sits above. The three values $\zeta_a$, $\zeta_b$, $\zeta_c$, prescribed at the corners $Q_a$, $Q_b$, $Q_c$, which are not collinear, determine a unique affine function
\[
h_F \colon \R^2 \to \R ,
\]
the \emph{height function} of $F$. More concretely, $h_F(x) = \langle m, x\rangle + m_0$ with $m \in \R^2$ and $m_0 \in \R$. The three conditions $h_F(Q_v) = \zeta_v$ at the corners are the linear system
\[
\begin{pmatrix}
1 & Q_a^x & Q_a^y\\
1 & Q_b^x & Q_b^y\\
1 & Q_c^x & Q_c^y
\end{pmatrix}
\begin{pmatrix} m_0\\ m_1\\ m_2 \end{pmatrix}
\;=\;
\begin{pmatrix} \zeta_a\\ \zeta_b\\ \zeta_c \end{pmatrix}.
\]
So in particular, if the coordinates of $Q$ and $\zeta$ lie in a field $K \subset \R$, then so do $m$ and $m_0$.

The lifted face is the graph of $t\,h_F$ over its shadow, in particular injective, and two lifted faces meet over a point $x$ common to their shadows exactly when the \emph{gap} $h_F - h_G$ vanishes at $x$. The gap is the relative speed of the two sheets over $x$.

The argument now runs as follows. A corner criterion (Lemma \ref{l:sepcert}) decides, by finitely many signs, that two lifted faces meet only along the vertex or edge they share, if any. The two directions we propose satisfy it on all $120$ pairs of faces, so both lifts are embedded for every positive time (Proposition \ref{p:separation}). And the same holds for the lift of every planar configuration near enough the base (Lemma \ref{l:persist}), which is what Section \ref{sec:correction} uses.

We consider all simplices closed, and the \emph{shared simplex} of two faces is their intersection $F \cap G$ in the triangulation, a vertex or an edge or empty. It is read off the face lists and does not depend on the configuration. We write $Q(S)$ for its shadow, which can be either empty, a single point $Q_s$, or a segment from $Q_a$ to $Q_b$. The shadow of a face is exactly the set where three affine functions are nonnegative, one for each edge $(p,q)$, given by $\det(Q_q - Q_p,\ x - Q_p)$ signed so that its value at the third vertex is positive. The \emph{overlap} $K = A \cap B$ of two shadows is a compact convex polygon, possibly empty and possibly degenerate, and its \emph{corners} are its extreme points, the points of $K$ not interior to any segment of $K$.

\begin{lemma}[separation criterion]\label{l:sepcert}
Let $F \ne G$ be faces with shared simplex $S$, shadows $A$ and $B$, gap $g = h_F - h_G$, and overlap $K$. If $g$ is nonzero and has the same sign at every corner of $K$ outside $Q(S)$, then the two lifted faces meet exactly in the lift of $S$. They are disjoint when $S$ is empty, they meet only at the lifted vertex when $S$ is a vertex, and they meet only along the lifted edge when $S$ is an edge.
\end{lemma}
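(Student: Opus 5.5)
The plan is to reduce the statement to a sign argument for an affine function on a compact convex polygon. First I identify the intersection of the two lifted faces with the zero set of the gap on the overlap. Then I show that under the hypothesis this zero set is exactly $K \cap Q(S)$.

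\textbf{Step 1: intersections are zeros of the gap.} Fix $t > 0$. The lifted face $F$ is the graph of $t\,h_F$ over $A$, and the lifted $G$ is the graph of $t\,h_G$ over $B$. So a point of $\R^3$ lies on both exactly when it is $(x, t h_F(x))$ with $x \in K$ and $g(x) = 0$. Since $t > 0$, the factor $t$ plays no role.

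Next, $g$ vanishes on $Q(S)$. The functions $h_F$ and $h_G$ both take the value $\zeta_s$ at $Q_s$ for every vertex $s$ of $S$, and an affine function is determined on the segment $Q(S)$ by its values at the endpoints. So every point of the lift of $S$ lies on both lifted faces.

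Conversely, suppose $x \in K \cap Q(S)$. The face $F$ is nondegenerate, so its shadow map is injective on $F$. Hence the only point of $F$ over $x$ is the point of $S$ over $x$, and the same holds for $G$. The common point over $x$ therefore lies on the lift of $S$. It remains to show that $g$ has no zero on $K \setminus Q(S)$.

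\textbf{Step 2: no zeros off $Q(S)$.} Let $\sigma$ be the common nonzero sign of $g$ at the corners of $K$ outside $Q(S)$. Since $K$ is compact and convex, it is the convex hull of its corners (Krein--Milman, or elementary for polygons). Take $x \in K \setminus Q(S)$ and write $x = \sum \lambda_i c_i$ as a convex combination of corners with all $\lambda_i > 0$.

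Not all of these corners can lie in $Q(S)$. The set $Q(S)$ is a point, a segment or empty, hence convex, so if every $c_i$ lay in it then $x$ would too. Hence at least one corner $c_j \notin Q(S)$ appears with $\lambda_j > 0$.

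By affineness, $g(x) = \sum \lambda_i g(c_i)$. Each term with $c_i \in Q(S)$ is $0$ by Step 1. Each term with $c_i \notin Q(S)$ has sign $\sigma$, and at least one such term is nonzero. So $g(x)$ has strict sign $\sigma$, and in particular $g(x) \neq 0$. When $S$ is empty, every corner lies outside $Q(S)$, and the argument gives disjointness directly.

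\textbf{Main obstacle.} The delicate point is the edge case. There $g$ vanishes on the whole line through $Q_a$ and $Q_b$, not only on the segment $Q(S)$, and one might worry about zeros of $g$ on that line outside $Q(S)$. The hypothesis handles this: it requires $g$ to be nonzero at the corners outside $Q(S)$. The convex-combination argument then excludes every zero off $Q(S)$, including zeros on the line beyond the segment. Any such point is a combination that puts positive weight on a corner outside $Q(S)$, where $g$ has strict sign $\sigma$.

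The degenerate cases need no separate treatment. If $K$ is empty, or $K \subset Q(S)$, then the conclusion holds trivially, because the lifts can only meet over $K \subset Q(S)$.
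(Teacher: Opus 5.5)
Your proof is correct and follows essentially the same route as the paper's. Both identify common points of the lifted faces with zeros of the gap on $K$, note that $g$ vanishes on $Q(S)$, and use the convex-combination-of-corners argument to show that $g$ has strict sign on $K \setminus Q(S)$. Your remarks on injectivity over $Q(S)$ and on the line through an edge only make explicit what the paper leaves implicit.
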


\begin{proof}
The lifted faces meet over $x$ precisely when $x$ lies in $K$ and $g(x) = 0$. Replacing $g$ by $-g$ if necessary, assume $g > 0$ at every corner of $K$ outside $Q(S)$. Both heights take the value $\zeta_v$ at a shared vertex $v$, so the gap vanishes on $Q(S)$, and $Q(S)$ lies in $K$ by convexity. Write $x$ in $K$ as a convex combination $x = \sum_i \lambda_i k_i$ of the corners $k_i$ of $K$. The gap is affine, so $g(x) = \sum_i \lambda_i\, g(k_i)$. The corners in $Q(S)$ contribute zero and the others a positive value, so $g(x)$ vanishes exactly when $\lambda_i = 0$ at every corner outside $Q(S)$, and $x$ then lies in $Q(S)$. Over $Q(S)$ the two lifts agree, and their common graph is the lift of $S$.
\end{proof}

Figure \ref{fig:corners} draws the criterion for a pair of faces of the square base that shares nothing, the one on which the smallest corner gap of Proposition \ref{p:separation} is attained.

\begin{figure}[!htbp]
\centering
\begin{tikzpicture}
\input{figs/sep-corners.tex}
\end{tikzpicture}
\qquad
\begin{tikzpicture}
\input{figs/sep-profile.tex}
\end{tikzpicture}
\caption{Two faces of the square base that share nothing. Left, the shadows of the faces $341$ and $506$, the overlap filled, its five corners dotted, and the corner circled where the smallest gap of Proposition \ref{p:separation} is attained. Right, a schematic profile of the same pair, each sheet solid over its own shadow and the sheets at least $0.00548\,t$ apart over the marked overlap.}\label{fig:corners}
\end{figure}

The hypothesis of Lemma \ref{l:sepcert} is a system of strict linear inequalities in $\zeta$, since each $h_F$ depends linearly on $\zeta$ and the corners of each overlap depend only on the base. Once the sign of each gap is fixed, its solutions form an open convex cone, and we took a point in it with short rational coordinates. Each pair of faces of the base now has to pass the criterion.

\begin{prop}\label{p:separation}
For the square base $Q^0$ with the lift direction
\[
\zeta \;=\; \big(\tfrac{3}{13},\ -\tfrac38,\ -\tfrac37,\ \tfrac{12}{13},\ 1,\ -1,\ 1,\ 0\big),
\]
and for the hexagonal base $P^0$ with the lift direction
\[
\zeta' \;=\; \big(\tfrac12,\ -1,\ \tfrac{6}{17},\ -\tfrac{7}{34},\ -\tfrac{33}{47},\ -1,\ \tfrac{19}{41},\ 1\big),
\]
the lift is embedded for every $t > 0$.
\end{prop}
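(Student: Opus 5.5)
The plan is to reduce embeddedness for every $t>0$ to a single finite exact computation. The first step is to remove $t$ entirely. The lift at time $t$ is the image of the lift at time $1$ under the affine map $(x,y,z)\mapsto(x,y,tz)$, which is a bijection of $\R^3$. So injectivity of the piecewise linear map of $T$ (respectively $T'$) at one positive time is equivalent to injectivity at every positive time. Alternatively, and more directly in the language of the section, the gap of a pair of lifted faces at time $t$ is $t\,g$, whose zero set does not depend on $t>0$. It therefore suffices to verify the hypothesis of Lemma \ref{l:sepcert} for the fixed gaps $g=h_F-h_G$.

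Next, embeddedness of the whole map reduces to the pairwise statement. Each lifted face is the graph of $t\,h_F$ over its nondegenerate shadow, so it is embedded by itself. The piecewise linear map is then injective exactly when every pair of distinct closed faces $F\neq G$ has image intersection equal to the image of $F\cap G$, and on $F\cap G$ the two affine pieces agree. Lemma \ref{l:sepcert} gives precisely this conclusion. What remains is to check its hypothesis on all $\binom{16}{2}=120$ pairs for each base.

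For each pair, the computation goes as follows. First read off the shared simplex $S$ from the face list. Then compute $h_F$ and $h_G$ by solving the $3\times3$ system above, with entries in $\Q$, respectively $\Q(\sqrt3)$, since $\zeta,\zeta'$ are rational. Next compute the overlap $K=A\cap B$ exactly. Clip one triangle by the three edge half-planes of the other, in Sutherland--Hodgman fashion. This yields the candidate corners as vertices of $A$ lying in $B$, vertices of $B$ lying in $A$, and intersections of edge pairs, all in the same field. Discard points that are not extreme, meaning points interior to a segment of $K$. Then, for the corners not in $Q(S)$ (membership tested exactly, by collinearity and betweenness for a segment), evaluate $g$ and check that all these values are nonzero with a common sign. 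Pairs with empty overlap pass vacuously. Pairs whose only corners lie in $Q(S)$ also pass; this happens when the shadows touch only along the shared edge or vertex. Every comparison is a sign test in $\Q$, or in $\Q(\sqrt3)$, where the sign of $a+b\sqrt3$ is decided by comparing $a^2$ and $3b^2$ together with the signs of $a$ and $b$. As a sanity check I would report the minimal normalized corner gap, for instance the $0.00548$ on the pair $341$, $506$.

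I expect the main obstacle to be the degenerate overlaps rather than the generic ones. There are adjacent faces folded over each other, where the shadows overlap in a full triangle sharing the edge, and there are pairs that share a vertex and whose shadows overlap only along a segment through that vertex. There are also coincidences forced by the base relations, such as the collinear vertices $0,2,4$ and the concyclicities, which can make a vertex of one shadow lie exactly on an edge of another. These cases demand that the corner enumeration be exact and that it correctly recognize corners lying in $Q(S)$, because a corner falsely excluded could hide a genuine crossing. I would handle them uniformly with exact predicates and no tolerances, and record the per-pair certificates in the verification appendix.
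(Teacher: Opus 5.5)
Your proposal is correct and is essentially the paper's proof: the gap at time $t$ is $t\,g$ with $g=h_F-h_G$, so it suffices to verify the corner criterion of Lemma \ref{l:sepcert} on all $120$ face pairs of each base by exact sign evaluations in $\Q$, respectively $\Q(\sqrt3)$, taking as corners the vertices of one shadow lying in the other together with the edge crossings, and then to pass from the pairwise statement to embeddedness. Your extra care with non-extreme candidates, parallel or collinear edges, and corners lying in $Q(S)$ only refines the same finite computation, which the paper delegates to the script \file{separation.py}.
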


\begin{proof}
Every one of the $\binom{16}{2} = 120$ pairs of faces of each base satisfies Lemma \ref{l:sepcert}, in exact arithmetic in its field. Everything is rational in the printed data, so the check is finitely many exact sign evaluations. The corners of an overlap are the vertices of one shadow lying in the other together with the crossings of their edges. Testing whether a vertex lies in a shadow is three sign evaluations, and finding a crossing is solving a linear $2 \times 2$ system. The gap is then evaluated at the corners outside $Q(S)$. On the square side, twenty-four pairs share an edge, seventy-two share exactly a vertex, and twenty-four share nothing, seventeen of the last with overlapping shadows. The smallest value of $|g|$ at a corner outside $Q(S)$ is $\tfrac{132043}{24086296}$, about $0.00548$, against a diameter of about two for $Q^0$. On the hexagonal side, twenty-four pairs share an edge, seventy-five share exactly a vertex, and twenty-one share nothing, sixteen of the last with overlapping shadows. The smallest such value is
\[
\tfrac{-25244566233401343 + 14596454895110643\,\sqrt3}{2513111807172148},
\]
a positive element of $\Q(\sqrt3)$, approximately $0.0148$, against a diameter of about one for $P^0$. The script \file{separation.py} of Appendix \ref{app:verify} runs all of it. So by Lemma \ref{l:sepcert} every two distinct lifted faces meet exactly in the lift of their shared simplex, and a configuration with nondegenerate faces in which every two faces meet exactly in their shared simplex is embedded, so the lift is embedded, for any $t > 0$.
\end{proof}

The next lemma tells us that once a lift is embedded, so are the lifts of a whole neighborhood of the horizontal coordinates, at every height.

\begin{lemma}[persistence of the separation]\label{l:persist}
There is, for each base, a neighborhood $U$ of it in $(\R^2)^V$, the same for every $t > 0$, such that for every planar configuration $Q \in U$ and every $t > 0$, the lift of $Q$ with heights $t\zeta$, respectively $t\zeta'$, is an embedded torus.
\end{lemma}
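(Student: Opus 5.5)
The plan has two steps: first remove the parameter $t$, then obtain the neighborhood $U$ from openness of embeddedness at a single height.

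\textbf{Step 1: reduction to $t = 1$.} For $t > 0$ the map $D_t(x,y,z) = (x,y,tz)$ is a linear automorphism of $\R^3$. The lift of $Q$ at time $t$ is $D_t$ applied to the lift of $Q$ at time $1$, face by face, since $D_t$ is affine and sends vertices to vertices. So the lift at time $t$ is embedded if and only if the lift at time $1$ is. This already gives the phrase ``the same for every $t > 0$'' in the statement, and reduces the lemma to finding a neighborhood $U$ of the base such that the time-$1$ lift of every $Q \in U$ is embedded.

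\textbf{Step 2: openness of embeddedness.} Consider the map $L \colon (\R^2)^V \to \mathcal{C}$, $Q \mapsto (Q_v, \zeta_v)_{v}$. It is affine, hence continuous. By Proposition \ref{p:separation} the base satisfies $L(Q^0) \in \mathcal{E}$, respectively $L(P^0) \in \mathcal{E}$. Since $\mathcal{C}^\circ$ and $\mathcal{E}$ are open, as recorded in Section \ref{ss:geometry}, the preimage $L^{-1}(\mathcal{E})$ is an open set containing the base. We take $U$ to be this preimage, or a ball inside it on which the planar faces also stay nondegenerate.

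\textbf{Main obstacle: justifying the openness of $\mathcal{E}$.} Openness was only asserted in Section \ref{ss:geometry}, so I would prove it for this situation, pair of faces by pair of faces, using the fact that every pair meets exactly in its shared simplex. I split into three cases.
\begin{itemize}
\item[] \emph{Pairs sharing nothing.} The two lifted faces are disjoint compact sets, so they lie at positive distance. Distance depends continuously on the vertices, so the pair stays disjoint nearby.
\item[] \emph{Pairs sharing an edge.} The two faces lie in distinct planes meeting along the common edge line. Their dihedral angle is strictly between $0$ and $2\pi$, and it moves continuously with the vertices. Hence the faces still meet only along the edge.
\item[] \emph{Pairs sharing only a vertex $v$.} This is the delicate case, because the faces can approach each other arbitrarily closely near $v$, so a distance argument alone fails. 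I would pass to the link. Intersect the two faces with a small sphere around $v$; each gives a geodesic arc, and these two arcs are disjoint compact sets at positive distance. Faces are cones near $v$, so this disjointness persists under small perturbation and rules out intersections within a fixed radius $r$ of $v$. Away from that radius the two truncated faces are again disjoint compacta, handled by the distance argument.
\end{itemize}
Since there are finitely many pairs, intersecting the resulting neighborhoods gives $U$. The radius $r$ in the vertex case must be chosen uniformly in a neighborhood, which works because the arcs vary continuously. This is the step where I expect the real care to be needed.

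An alternative is a direct argument through Lemma \ref{l:sepcert}, using the strict gap bounds of Proposition \ref{p:separation}. I would avoid it, because the corners of the overlaps can change combinatorially under perturbation, for instance when a vertex sits exactly on another shadow's edge.
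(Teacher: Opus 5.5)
Your proof is correct and follows essentially the paper's argument: openness of embeddedness applied to the continuous map $Q \mapsto (Q_v,\zeta_v)_v$ at $t=1$, followed by the linear map $(x,y,z)\mapsto(x,y,tz)$ to make $U$ independent of $t$. Your pairwise case analysis spells out what the paper compresses into its one-line ``positive margin'' justification, and the link argument handles pairs that share only a vertex. One small correction: in the edge case you do not need the faces to lie in distinct planes, only that the two half-planes at the common edge stay distinct.
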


\begin{proof}
Embeddedness is an open condition on a configuration: at an embedded one, each pair of faces meets exactly in the image of its shared simplex by a positive margin, the margins move continuously with the vertices, and the pairwise fact from the proof of Proposition \ref{p:separation} keeps every nearby configuration embedded. The lift at $t = 1$ is continuous in $Q$ and embedded at the base by Proposition \ref{p:separation}, so the planar configurations with embedded lift at $t = 1$ form an open neighborhood $U$ of the base, and the invertible map $(x, y, z) \mapsto (x, y, tz)$ carries that lift to the lift at height $t$, so the same $U$ works for every $t > 0$.
\end{proof}

\section{The correction}\label{sec:correction}

The pieces are now in place, and we recall what each provides. Section \ref{sec:bases} produced two configurations, exactly flat and of exactly the right moduli, collapsed into the plane rather than embedded. Section \ref{sec:lift} lifted their vertices into $\R^3$, giving a one-parameter family of embedded configurations, along which flatness and the modulus drift, as Figure \ref{fig:triple} shows for $Q^0$, so none is yet the torus we want. This section remedies that. The correction moves only the horizontal positions, while the heights stay at their lifted values and are the parameter. We write flatness and the modulus together as a map $\Phi$ of the sixteen horizontal coordinates of the eight vertices, and show three things at the base. $\Phi$ vanishes, it is smooth, and its derivative, exact in the field, has full rank (Proposition \ref{p:jacobian}). The implicit function theorem then returns corrected horizontal positions, at each sufficiently small time, making the lifted configuration exactly flat with exactly the right modulus, and Lemma \ref{l:persist} keeps it embedded.

To build a map whose zeros are exactly the flat configurations of the desired modulus, we take
\[
\Phi \;=\; \big(\delta_0,\ \dots,\ \delta_6,\ \Rep\tau - \Rep\tau_0,\ \Imp\tau - \Imp\tau_0\big),
\]
where $\delta_0, \dots, \delta_6$ are seven of the eight angle deficits, the eighth being determined by the deficit sum of Section \ref{ss:geometry}, and the last two coordinates pin the modulus $\tau = v_2/v_1$, relative to the marking of Section \ref{sec:bases}, to its target $\tau_0$, the marked modulus $i$ for the square base and $\rho$ for the hexagonal one. A zero of $\Phi$ is then exactly what we want, all eight cone angles equal to $2\pi$ and marked modulus exactly $\tau_0$, and the base is one, by Proposition \ref{p:base}.

The domain of $\Phi$ needs care. The deficits are defined on all of $\mathcal{C}^\circ$, and the implicit function theorem needs $\Phi$ on an open neighborhood of the base, not only on the flat locus $\mathcal{F}$ through it. Section \ref{ss:developing} provides $\tau$ there as well, developed along a fixed tree with the periods summed along the marking loops, once we fix in which of its two faces each marking edge is read, the two developed copies no longer agreeing off $\mathcal{F}$. We develop along the tree of Section \ref{sec:bases} and read each marking edge in the earlier of its two faces, any other choice working equally well, all extensions agreeing on $\mathcal{F}$ (Section \ref{ss:developing}). With this, $\Phi$ is defined on a neighborhood of the base and is smooth there, the deficits wherever the faces are nondegenerate (Section \ref{ss:geometry}) and $\tau$ wherever in addition $v_1 \ne 0$ (Section \ref{ss:developing}), and both hold at the base by Section \ref{sec:bases}.

We select nine of the horizontal coordinates as unknowns and hold the remaining seven fixed, at their base values, leaving $\Phi$ a map of nine variables. The selection must make the corresponding $9 \times 9$ block of the Jacobian invertible, which is possible exactly when the full $9 \times 16$ Jacobian has rank nine. That Jacobian can be computed exactly. Although $\Phi$ is transcendental as a function of the configuration, its derivative at a planar base has entries in the field of the base, by Proposition \ref{p:rational} and Lemma \ref{l:planar}.

\begin{prop}\label{p:jacobian}
At each planar base the $9 \times 16$ Jacobian of $\Phi$ with respect to the horizontal coordinates has entries in the field of the base and exact rank $9$. The nine columns selected by Gaussian elimination are, for the square and the hexagonal base respectively,
\[
Q_0^x,\quad Q_0^y,\quad Q_1^x,\quad Q_1^y,\quad Q_2^y,\quad Q_3^x,\quad Q_4^x,\quad Q_4^y,\quad Q_5^x,
\]
\[
P_0^x,\quad P_0^y,\quad P_1^x,\quad P_1^y,\quad P_2^x,\quad P_2^y,\quad P_3^x,\quad P_3^y,\quad P_4^x,
\]
and the $9 \times 9$ minor $M$ in these columns is invertible.
\end{prop}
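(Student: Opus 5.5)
The plan is a direct exact computation, organized so that every entry of the Jacobian is produced by the rational formulas of Section \ref{ss:exact} and every arithmetic step stays in $\Q$, respectively $\Q(\sqrt3)$. First, by Lemma \ref{l:planar}, the geometric data of each base (the $24$ squared edge lengths and the sixteen doubled areas $S_f = \varepsilon_f \det(Q_b - Q_a, Q_c - Q_a)$, using the orientation signs printed in Section \ref{sec:bases}) lie in the field of the base. For each face and each corner I will then compute $d\theta_A = (p\,dS - S\,dp)/(p^2+S^2)$ with $dS = (\alpha\,d\beta + \beta\,d\alpha - 2p\,dp)/(2S)$, as a form in the differentials of squared lengths. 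Next I substitute $d|q_a - q_b|^2 = 2\langle q_a - q_b, dq_a - dq_b\rangle$ and drop the $z$ components, which is harmless since the heights are held fixed and, at a planar configuration, the vertical components of $q_a - q_b$ vanish anyway. Summing $-d\theta$ over the corners at each vertex gives the first seven rows $d\delta_0,\dots,d\delta_6$ with entries in the field.

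For the two modulus rows I differentiate the development along the fixed breadth-first tree of Section \ref{sec:bases}. Inductively, $dC = dA + d(p/\alpha)(B-A) + (p/\alpha)(dB - dA) + d(S/\alpha)(B-A)^\perp + (S/\alpha)(dB-dA)^\perp$, starting from the root with $dA = dB = 0$. This yields $dv_1$ and $dv_2$ as sums over the marking edges, each read in its earlier face. Then $d\tau = (dv_2 - \tau_0\,dv_1)/v_1$ at the base, since $v_2 = \tau_0 v_1$ there by Proposition \ref{p:base}. Taking real and imaginary parts gives rows eight and nine. Since $\tau_0 \in \Q(i)$ or $\Q(\sqrt3)(i)$, and since $1/v_1 = \bar v_1/|v_1|^2$, these entries also lie in the field. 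This establishes the first claim of the proposition, that all $144$ entries lie in the field.

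For the rank I will run exact Gaussian elimination over the field on the $9\times16$ matrix, with pivots chosen among columns in the printed coordinate order. This should pick out exactly the listed columns. Then I compute $\det M$ directly as an element of $\Q$ or $\Q(\sqrt3)$ and verify that it is nonzero. In $\Q(\sqrt3)$, an element $a + b\sqrt3$ is nonzero iff $(a,b) \ne (0,0)$, so this is a decidable test. A nonzero $9\times9$ minor gives rank $9$, since nine is the number of rows.

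There is one conceptual sanity check I would make before trusting the count. The seven-dimensional similarity group acts on $\mathcal C$, but its action on horizontal coordinates with fixed heights is only four-dimensional (planar translations, a rotation about the vertical axis, and scaling, the last not preserving heights, but it is still tangent at $t=0$). All of these preserve $\Phi$, so the kernel has dimension at least four. That is compatible with rank $9 \le 16 - 4 = 12$, so no obstruction arises. The main obstacle is purely the bookkeeping of the development derivative: making the transported $\perp$ and orientation conventions consistent with the signs $\varepsilon_f$, so that the formula for $C$ is applied with $S>0$ in the correct orientation. I will guard against errors by checking the exact Jacobian against a floating-point finite-difference Jacobian of $\Phi$ at the base, before certifying $\det M \neq 0$ exactly. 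That certification is the computation recorded in Appendix \ref{app:verify}.
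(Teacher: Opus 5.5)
Your proposal is correct and is essentially the paper's proof. The paper likewise computes all $144$ entries exactly in $\Q$, respectively $\Q(\sqrt3)$, from the rational derivative formulas of Section~\ref{ss:exact} and Lemma~\ref{l:planar}, selects the pivot columns by exact Gaussian elimination and certifies $\det M \ne 0$ (printing its value), with the same floating-point finite-difference control you propose; the only difference is that \file{development.py} carries one gradient through every arithmetic operation of the development instead of using your explicit recursion for $dC$, which amounts to the same computation.
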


\begin{proof}
The computation is the script \file{development.py} of Appendix \ref{app:verify}, run in exact arithmetic on the printed data, over $\Q$ on the square side and over $\Q(\sqrt3)$ on the hexagonal one. It rebuilds the development of Section \ref{ss:developing} from each base, assembles the $9 \times 16 = 144$ entries from the angle derivative formula of Section \ref{ss:exact}, carrying one gradient alongside every intermediate quantity by the chain rule, one arithmetic operation at a time. On the square side
\[
\det M \;=\; \tfrac{4243180825891307520000}{4349139164624211581} \;\ne\; 0,
\]
and on the hexagonal side $\det M$ is a number $a + b\sqrt3$ whose two rational parts have $89$-digit numerators over a common $85$-digit denominator, printed in full by the script, both parts negative, so the determinant is nonzero, approximately $-7.328 \times 10^4$.
\end{proof}

We take these nine coordinates as the unknowns, written $x$, with $x^0$ their values at the base, the other seven held at their base values. Write $q(x, t)$ for the configuration whose nine free horizontal coordinates are $x$, whose other seven are those of the base, and whose heights are $t\zeta$, respectively $t\zeta'$, and set
\[
\Psi(x, t) \;=\; \Phi\big(q(x, t)\big),
\]
the map the implicit function theorem is applied to.

One structural fact finishes the construction, the same on each base. The heights enter $\Phi$ only through the geometric data (Proposition \ref{p:rational}), which the squared edge lengths determine, and the squared edge length of $(a,b)$ at heights $t\zeta$ is
\[
|Q_a - Q_b|^2 \;+\; t^2\,(\zeta_a - \zeta_b)^2,
\]
twenty-four numbers that keep every face nondegenerate and $v_1$ nonzero near the base. So $\Psi$ is smooth in $x$ and $t$ jointly near $(x^0, 0)$, by the smoothness of $\Phi$ established before Proposition \ref{p:jacobian}. Then $\Psi(x^0, 0) = 0$, by Proposition \ref{p:base}, with $\partial_x \Psi(x^0,0) = M$ invertible by Proposition \ref{p:jacobian}.

\begin{lemma}[correction lemma]\label{l:opening}
On each base there are $\varepsilon > 0$ and a continuous map $t \mapsto x(t)$ on $(-\varepsilon, \varepsilon)$ with $x(0) = x^0$ and $\Psi(x(t), t) = 0$. In particular, for $0 < t < \varepsilon$ the configuration $q_t = q(x(t), t)$ has all eight cone angles equal to $2\pi$ and modulus exactly $\tau_0$ relative to the marking.
\end{lemma}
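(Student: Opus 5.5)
This is a direct application of the implicit function theorem to $\Psi$ near $(x^0,0)$. I will proceed in three steps.

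\textbf{Smoothness of $\Psi$.} I will first make precise that $\Psi$ is $C^1$, indeed smooth, on an open neighborhood $W \times (-\varepsilon_1, \varepsilon_1)$ of $(x^0, 0)$ in $\R^9 \times \R$. As noted just before the lemma, $q(x,t)$ depends polynomially on $(x,t)$. Its squared edge lengths are $|Q_a - Q_b|^2 + t^2(\zeta_a - \zeta_b)^2$, with the horizontal part polynomial in $x$. At the base every face is nondegenerate, with doubled areas $S_f = \varepsilon_f \det(\cdot,\cdot) \neq 0$, and $v_1 \neq 0$ by Proposition~\ref{p:base}. Both are open conditions, so they persist on a neighborhood. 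There the deficits are smooth, being sums of $\arg(p + iS)$ with $S > 0$, and $\tau$ is a rational function of the geometric data with nonvanishing denominator $|v_1|^2$. The doubled areas $S_f = \sqrt{\alpha\beta - p^2}$ are smooth wherever the radicand is positive. So $\Psi$ is smooth jointly in $(x,t)$, including at $t = 0$ and for negative $t$.

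\textbf{Applying the implicit function theorem.} We have $\Psi(x^0, 0) = 0$ by Proposition~\ref{p:base}, since all deficits vanish and $\tau = \tau_0$. The partial derivative $\partial_x \Psi(x^0, 0)$ is the $9 \times 9$ minor $M$, because at $t = 0$ the configuration $q(x,0)$ is planar and $\partial_x\Psi$ is the restriction of the Jacobian of Proposition~\ref{p:jacobian} to the selected columns. Since $M$ is invertible, the implicit function theorem gives $\varepsilon > 0$ and a $C^1$, hence continuous, map $t \mapsto x(t)$ on $(-\varepsilon, \varepsilon)$ with $x(0) = x^0$ and $\Psi(x(t), t) = 0$.

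\textbf{Interpreting the zeros.} For $0 < t < \varepsilon$, the vanishing of $\delta_0, \dots, \delta_6$ together with $\sum_v \delta_v = 0$ forces $\delta_7 = 0$, so all cone angles are $2\pi$ and $q_t$ lies in $\mathcal{F}$. On $\mathcal{F}$ the chosen extension of $\tau$ agrees with the intrinsic marked modulus, as shown in Section~\ref{ss:developing}, so the last two components give marked modulus exactly $\tau_0$.

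\textbf{Main obstacle.} The only delicate point is the identification $\partial_x\Psi(x^0,0) = M$. The Jacobian of Proposition~\ref{p:jacobian} was taken with respect to horizontal coordinates at a planar configuration. I will check that varying $x$ with $t = 0$ fixed moves exactly those coordinates and leaves the heights at zero, so the two derivatives coincide. The remaining steps are routine, apart from shrinking $\varepsilon$ so that $q(x(t), t)$ stays inside the smoothness neighborhood.
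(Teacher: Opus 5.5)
Your proposal is correct and follows the paper's own argument. It establishes joint smoothness of $\Psi$ near $(x^0,0)$ through the squared edge lengths, applies the implicit function theorem with $\partial_x\Psi(x^0,0)=M$ from Proposition~\ref{p:jacobian}, and uses the deficit sum to turn seven vanishing deficits into eight. Your extra remarks, on why $\partial_x\Psi$ is exactly the minor $M$ and why the fixed-tree extension of $\tau$ agrees with the marked modulus on $\mathcal{F}$, only spell out what the paper leaves implicit.
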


\begin{proof}
The implicit function theorem applied to $\Psi$ at $(x^0, 0)$, where $\partial_x \Psi$ is the invertible matrix $M$, gives $\varepsilon > 0$ and a continuously differentiable map $x(t)$ on $(-\varepsilon, \varepsilon)$ with $x(0) = x^0$ and $\Psi(x(t), t) = 0$. The second statement is the deficit sum of Section \ref{ss:geometry}, which turns the seven vanishing deficits into eight.
\end{proof}

Nothing quantitative about the curve $t \mapsto x(t)$ is needed. The argument below uses only its continuity at $t = 0$. A marking is a pair of loops on $T$ and does not move when the configuration does, so its classes form a basis of $H_1(T;\Z)$ all along the curve, by the intersection argument of Section \ref{sec:bases}, and the modulus $\tau_0$ of Lemma \ref{l:opening} is therefore the reduced modulus.

\begin{proof}[Proof of Theorem \ref{t:main}]
On each base, let $Q(t)$ be the planar configuration whose nine free coordinates are the curve $x(t)$ of Lemma \ref{l:opening} and whose other seven are those of the base, so that $q_t$ is its lift at time $t$. By continuity $Q(t)$ stays in the neighborhood $U$ of Lemma \ref{l:persist} for all small $t$, so for small $t > 0$ the configuration $q_t$ is an embedded torus, and it is exactly flat of reduced modulus $i$, respectively $\rho$, by Lemma \ref{l:opening} and the paragraph preceding this proof. No torus triangulation has fewer than seven vertices (Section \ref{ss:geometry}), and no paper torus has seven \cite{schwartz-bound}, so eight is minimal.
\end{proof}

The same computation describes the flat configurations around the two tori.

\begin{theoremx}[\ref{t:path}, restated]
On each of the two triangulations there is an open neighborhood $W$ of the printed folded base in the set $\mathcal{C}^\circ$ of configurations with nondegenerate faces, with the following three properties. The flat locus $\mathcal{F} \cap W$, the configurations in $W$ with all eight cone angles $2\pi$, is a smooth manifold of dimension seventeen. The modulus is a submersion on it, so each of its nonempty fibers is a smooth manifold of dimension fifteen. And the fiber over the target, $i$ on $T$ and $\rho$ on $T'$, contains the configurations $q_t$ of the proof of Theorem \ref{t:main} for all sufficiently small $t > 0$, and these are embedded.
\end{theoremx}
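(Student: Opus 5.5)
The plan is to realize $\mathcal{F}$ near the base as the zero set of the seven deficits and to read off both dimension claims from the rank of one exact Jacobian. Consider $\delta = (\delta_0,\dots,\delta_6)\colon \mathcal{C}^\circ \to \R^7$. By the deficit sum of Section \ref{ss:geometry}, $\mathcal{F} = \delta^{-1}(0)$ on $\mathcal{C}^\circ$. So it suffices to show that $d\delta$ has rank $7$ at the base as a map on all of $\mathcal{C} \cong \R^{24}$. The regular value theorem, applied in a neighborhood $W$ of the base on which the rank stays $7$ (rank is lower semicontinuous, so full rank is open), then makes $\mathcal{F} \cap W$ a smooth manifold of dimension $24 - 7 = 17$. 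This rank statement comes for free from Proposition \ref{p:jacobian}. The $9 \times 16$ Jacobian of $\Phi$ in the horizontal coordinates has rank $9$, so its first seven rows, which are $d\delta$ restricted to horizontal directions, are linearly independent. Hence $d\delta$ already has rank $7$ on the horizontal subspace, and a fortiori on all of $T\mathcal{C}$.

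For the submersion claim, I would use the same Jacobian, now viewed as the differential of $(\delta, \Rep\tau, \Imp\tau)$, where $\tau$ is the smooth extension fixed in Section \ref{sec:correction}. This map has rank $9$ at the base, hence on a possibly smaller $W$. Consider a point $p$ of $\mathcal{F} \cap W$. The tangent space is $T_p\mathcal{F} = \ker d\delta_p$, and we need $d\tau_p$ to map it onto $\R^2$. Suppose not. Then some nonzero covector $\lambda$ on $\R^2$ kills $d\tau_p$ on $\ker d\delta_p$, so $\lambda\circ d\tau_p$ is a combination of the rows of $d\delta_p$. That is a linear dependence among the nine rows, contradicting rank $9$.

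On $\mathcal{F}$ the extension agrees with the intrinsic modulus. So the modulus restricted to $\mathcal{F}\cap W$ is a smooth submersion to $\HH$, and by the regular value theorem again each nonempty fiber is a manifold of dimension $17 - 2 = 15$. The marked modulus near $\tau_0$ and the reduced modulus differ by a fixed local chart of the quotient away from the orbifold point. At $i$ and $\rho$ themselves, though, I would state the submersion for the marked $\tau$, which is the meaningful smooth map.

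For the third property, I would first shrink $\varepsilon$ if necessary. The configurations $q_t$ of Lemma \ref{l:opening} depend continuously on $t$ and equal the base at $t=0$, so they lie in $W$ for all small $t$. They are flat with $\tau = \tau_0$, so they lie in the fiber over the target. They are embedded by the proof of Theorem \ref{t:main} via Lemma \ref{l:persist}.

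The main obstacle is only a bookkeeping point: the rank in Proposition \ref{p:jacobian} is computed in the horizontal coordinates at a planar point. I must make sure that the neighborhood $W$ is taken in the full $24$-dimensional $\mathcal{C}^\circ$. It must also avoid $v_1 = 0$ and degenerate faces, which holds near the base. Finally, the 17 must be checked to be consistent: it includes the seven-dimensional similarity group, and $\tau$ is invariant under it, so the fibers, of dimension 15, contain those orbits. This is consistent, with no further computation required.
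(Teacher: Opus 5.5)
Your proposal is correct and follows the paper's proof essentially step for step. Full rank of the exact Jacobian propagates from the base to a neighborhood $W$ in all of $\mathcal{C}^\circ$; the paper phrases this through continuity of the $9\times 9$ minor, and also shrinks $W$ to keep $\det(v_1,v_2)>0$. The seven deficit rows then give the regular value and the $17$-manifold, the row-space argument gives the submersion, and continuity of $q_t$ at $t=0$ places these configurations in the target fiber, so nothing is missing.
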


\begin{proof}
The map $\Phi$ is continuously differentiable on the open set of configurations with nondegenerate faces and $v_1 \ne 0$ (Section \ref{ss:developing}), so the determinant of the $9 \times 9$ minor of the Jacobian of $\Phi$ in the nine free coordinates of Proposition \ref{p:jacobian} is continuous there, and it is nonzero at the base. Take $W$ to be the open neighborhood of the base where it stays nonzero and where $\det(v_1, v_2)$, positive at the base (Section \ref{sec:bases}), stays positive, so that $\tau$ maps $\mathcal{F} \cap W$ into $\HH$. The $9 \times 24$ Jacobian of $\Phi$ then has rank nine at every point of $W$.

The seven deficit rows of that Jacobian are rows of a rank-nine matrix, so they are linearly independent at every point of $W$. Zero is therefore a regular value of $(\delta_0, \dots, \delta_6)$ on $W$, its zero set $\mathcal{F} \cap W$ is a smooth manifold of dimension $24 - 7 = 17$, and the tangent space at each of its points is the kernel of the seven deficit differentials. On $\mathcal{F}$ the fixed-tree extension agrees with the modulus of Section \ref{ss:geometry}, by Section \ref{ss:developing}, so the differential of $\tau$ on that tangent space is the restriction of the two modulus rows. Suppose $\tau$ failed to be a submersion at a point of $\mathcal{F} \cap W$. Some nontrivial combination of the two modulus rows would then be orthogonal to the tangent space there, the common kernel of the seven deficit rows. A vector orthogonal to that kernel lies in the span of the seven, the row space of a matrix being the orthogonal complement of its null space. So the nine rows would be dependent, contradicting rank nine. Hence $\tau$ is a submersion on $\mathcal{F} \cap W$, and a nonempty fiber is a smooth manifold of dimension $17 - 2 = 15$.

The configurations $q_t$ converge to the base as $t \to 0$, their planar part being the continuous curve $Q(t)$ with $Q(0)$ the base and their heights being the lifted ones, so they lie in $W$ for all small $t > 0$. They are zeros of $\Phi$ (Lemma \ref{l:opening}), which puts them in the fiber over the target, and they are embedded for small $t$ by the proof of Theorem \ref{t:main}.
\end{proof}

The rectangular and the rhombic ray, the two that Doyle and Schwartz leave out \cite{doyle-schwartz}, both start at an orbifold point, and the submersion covers an initial segment in each.

\begin{proof}[Proof of Corollary \ref{c:rays}]
Fix a small $t > 0$ and let $p = q_t$ on $Q^0$, an embedded point of the fiber of $\tau$ over $i$ (Theorem \ref{t:path}). The set $\mathcal{E}$ of embedded configurations is open (Section \ref{ss:geometry}), so $p$ has an open neighborhood $N \subset \mathcal{E} \cap W$, and $\mathcal{F} \cap N$ is an open neighborhood of $p$ in the manifold $\mathcal{F} \cap W$. A submersion is an open map, so $\tau(\mathcal{F} \cap N)$ contains a ball $B$ around $i$ in $\HH$, and every point of $\mathcal{F} \cap N$ is flat and embedded, a paper torus of marked modulus $\tau$. The rhombic segment comes from $p = q_t$ on $P^0$ the same way, and $\varepsilon$ is the smaller of the two ball radii. The marked moduli in the statement are their own reduced representatives, $iY$ with $Y \ge 1$ lying in $\mathcal{D}$ and $\tfrac12 + iH$ with $H \ge \tfrac{\sqrt3}{2}$ as well, by $|\tau|^2 = \tfrac14 + H^2 \ge 1$ and the convention $\Rep\tauhat \ge 0$.
\end{proof}

\section{Open questions}\label{sec:discussion}

The first question is what remains of the two rays. Doyle and Schwartz supply every modulus off them \cite{doyle-schwartz}, and Theorem \ref{t:main} with Corollary \ref{c:rays} supplies an initial segment of each, so what is open is the rest. The two families of Section \ref{sec:bases} sweep the rays, and proving embeddedness along them is the subject of the sequel paper still in progress.

The second question asks for an explicitly given paper torus. Theorem \ref{t:main} produces the tori $q_t$ without coordinates, and we know no $8$-vertex paper torus of modulus exactly $i$ or exactly $\rho$ with exact coordinates. The tori of Appendix \ref{app:fat} are only numerical, however they raise another mathematical question, taken up below.

Two more questions concern the triangulations. Of the seven $8$-vertex torus triangulations, one carries no paper torus at all. It has a vertex of valence $3$, where the three angles sum to $2\pi$ by flatness, and three unit vectors whose pairwise angles sum to $2\pi$ are coplanar, so the star is flat and erasing the vertex leaves a $7$-vertex paper torus, impossible by \cite{schwartz-bound}. Of the six that remain, five realize the square modulus numerically. The last, with degree sequence $(4,5,5,6,7,7,7,7)$, resisted every numerical search. We found no paper torus on it at any modulus, and we suspect that none exists. Near universality was proved on the valence-regular $T$ \cite{doyle-schwartz}, and the hexagonal torus is realized here on $T'$. A single triangulation carrying every flat torus exists, the universal one of \cite{lazarus-tallerie} with $2434$ faces, but we do not know whether one of the $8$-vertex triangulations realizes every flat torus.

The last question concerns the space of all realizations. Fix one of the two triangulations and consider the embedded flat configurations on it whose marked modulus is exactly the target $\tau_0$ of Section \ref{sec:correction}, $i$ on $T$ and $\rho$ on $T'$, the set $\mathcal{E} \cap \mathcal{F} \cap \tau^{-1}(\tau_0)$ in the notation of Section \ref{ss:geometry}. Every point of it is an $8$-vertex paper torus realizing the square torus, respectively the hexagonal one, up to scaling. Theorem \ref{t:main} makes this set nonempty, and near the tori $q_t$ it is a smooth fifteen dimensional manifold, the fiber of the submersion of Theorem \ref{t:path} (eight dimensional after quotienting by the similarities of $\R^3$). Its global shape is unknown to us, and the same is true at every modulus off the two rays, where near universality \cite{doyle-schwartz} makes the set nonempty. Two realizations in one path component deform into one another through paper tori of the same shape. We ask how many path components there are, and whether every one of them has a planar configuration in its closure, so that every realization can be driven into the plane through paper tori of its own modulus. Appendix \ref{app:fat} gives the one piece of numerical evidence we have on the second question, an inflated torus joined to the tori $q_t$ by a path traced numerically inside this set, and following such paths toward the plane is how the two families of Section \ref{sec:bases} were found in the first place.

\appendix

\section{Verification}\label{app:verify}

Every claim the paper makes about the two bases is checked by short Python scripts using exact arithmetic on the data of Table \ref{tab:base}, plain fractions on the square side and pairs of fractions representing $a + b\sqrt3$ on the hexagonal one. No floating point number enters any accepted claim, and each individual check is small enough to easily redo in a computer algebra system.

\begin{center}
\small
\begin{tabular}{@{}l l l@{}}
\toprule
script & checks & in the text\\
\midrule
\file{field.py} & the ordered field $\Q(\sqrt3)$ & Section \ref{sec:bases}\\
\file{combinatorics.py} & each face list is a triangulated torus & Section \ref{sec:bases}\\
\file{separation.py} & the gap at the corners of each overlap, all $120$ face pairs & Prop.\ \ref{p:separation}\\
\file{development.py} & face signs, the developed coordinates, holonomies, $\tau$, $\det M \ne 0$ & Props.\ \ref{p:base}, \ref{p:jacobian}\\
\bottomrule
\end{tabular}
\end{center}

The code is a small public repository,
\begin{center}
\url{https://github.com/FabianLander/eight-vertex-paper-tori-verification}
\end{center}
and runs from the single command \file{python3 verify.py}. The whole run, for both tori, takes under two seconds and prints every check together with the value it recomputed. All four read their input from one file, which holds the two columns of Table \ref{tab:base} and the two lift directions of Proposition \ref{p:separation} and nothing else. The combinatorics script tests five conditions on the two face lists: distinct faces, every directed edge traversed by exactly one face cycle, a single cycle of faces around each vertex, connectedness, and $|F| = 2|V|$. The first four make each list a connected closed oriented surface, and the count $8 - 24 + 16 = 0$ then makes it a torus. The development script rebuilds the development of Section \ref{ss:developing} from the printed coordinates and derives from it everything Propositions \ref{p:base} and \ref{p:jacobian} assert, including that the nine columns Gaussian elimination selects are the nine the proposition prints. It checks as well that the eight vertex holonomies of each base are the identity, redundant given the nine gluing checks by the argument of Section \ref{ss:developing}, and kept as a sanity check. One control runs alongside, not part of any proof: the Jacobian agrees, entry by entry and to within $10^{-9}$, with central differences of an independent floating point evaluation of $\Phi$.

\section{Two inflated tori}\label{app:fat}

The tori of Theorem \ref{t:main} are nearly planar, and no scissors will reproduce them. The two tori of Figure \ref{fig:fat-renders}, with coordinates in Tables \ref{tab:fat-square} and \ref{tab:fat-hex}, are the same two flat tori, realized farther from the plane. Write $\mathcal{R}$ for the embedded flat configurations of marked modulus exactly $\tau_0$ on the triangulation at hand, the set $\mathcal{E} \cap \mathcal{F} \cap \tau^{-1}(\tau_0)$ of Section \ref{ss:geometry} with $\tau_0 = i$ on $T$ and $\tau_0 = \rho$ on $T'$, whose shape Section \ref{sec:discussion} asks about. The printed square torus is, to numerical precision, a point of $\mathcal{R}$, and a numerical path inside $\mathcal{R}$ joins it to the tori $q_t$ of Theorem \ref{t:main}, a flow that drives the torus toward the plane while a projection holds flatness and the modulus and barriers keep the faces apart, followed by a short walk in the fiber onto the family $q_t$ (Figure \ref{fig:inflation}). The hexagonal case reads the same. Run all the way down to the plane, the same flow lands on the two families of Section \ref{sec:bases} at their corners, which is how the folded bases were found.

The evidence for the two paths is numerical, a chain of configurations with small steps, each verified flat to $2 \times 10^{-12}$, at its modulus to $10^{-12}$, and embedded, with the separation between disjoint faces never below $3 \times 10^{-4}$, respectively $2 \times 10^{-5}$, of the configuration's size. Dense sampling is not continuity, so what this supports is the intuition behind the question of Section \ref{sec:discussion}. Of the two printed tori themselves we claim only this. Read as exact rational numbers, both are exactly embedded, verified by a separating-axis reduction in exact rational arithmetic, their cone angles equal $2\pi$ to within $3 \times 10^{-15}$, and their marked moduli match $i$ and $\rho$ to fifteen digits.

Figures \ref{fig:net-square} and \ref{fig:net-hex} are printable nets, one per torus, each carrying the whole surface as one connected sheet. Solid lines are cuts. A green dash-dotted crease is a mountain and a purple dashed one a valley, and each crease is labeled M or V together with the angle between its two faces, measured on the printed side, which is the side that ends up outside. A letter pairs a glue tab with the edge it is glued behind. The angles are only a guide. Once the faces are folded by roughly the right amount, the torus closes up correctly by itself. We can report that both tori do fold up in practice.

\begin{table}[!htbp]
\centering
\footnotesize
\setlength{\tabcolsep}{5pt}
\input{figs/table-coords.tex}
\caption{The inflated square torus, the eight vertex positions to full double precision.}\label{tab:fat-square}
\end{table}

\begin{table}[!htbp]
\centering
\footnotesize
\setlength{\tabcolsep}{5pt}
\input{figs/table-coords-rho.tex}
\caption{The inflated hexagonal torus, the same way.}\label{tab:fat-hex}
\end{table}

\begin{figure}[p]
\centering
\includegraphics[page=2,width=\textwidth,height=0.90\textheight,keepaspectratio]{figs/fold-nets-paper.pdf}
\caption{The cut-out net of the inflated square torus, fifteen creases, nine glued seams, no interior vertex.}\label{fig:net-square}
\end{figure}

\begin{figure}[p]
\centering
\includegraphics[page=3,width=\textwidth,height=0.90\textheight,keepaspectratio]{figs/fold-nets-paper.pdf}
\caption{The cut-out net of the inflated hexagonal torus, fifteen creases, nine glued seams, no interior vertex.}\label{fig:net-hex}
\end{figure}

\end{document}

%% file: figs/s5-fd.tex
\draw[->, black!70] (-2.6000,0) -- (2.9900,0);
\draw[black!70] (-1.3000,0) -- (-1.3000,0.14);
\draw[black!70] (1.3000,0) -- (1.3000,0.14);
\draw[black!35, dashed] (-1.3000,0) -- (-1.3000,2.2517);
\draw[black!35, dashed] (1.3000,0) -- (1.3000,2.2517);
\draw[->, black!70] (0,0) -- (0,4.9400) node[above] {$\mathrm{Im}\,\tau$};
\fill[figblue, fill opacity=0.06] (-1.3000,4.8100) -- (-1.3000,2.2517) arc (120:60:2.6000) -- (1.3000,4.8100) -- cycle;
\draw[black!35, dashed] (-2.6000,0) arc (180:0:2.6000);
\draw[black!75, line width=0.9pt, dash pattern=on 3pt off 2.4pt] (-1.3000,2.2517) arc (120:90:2.6000);
\draw[black!75, line width=0.9pt] (90:2.6000) arc (90:60:2.6000);
\draw[-{Stealth[length=7pt,width=6pt]}, figred, line width=1.4pt, dash pattern=on 3pt off 2.4pt] (-1.3000,3.3800) -- (-1.3000,3.8600);
\draw[-{Stealth[length=7pt,width=6pt]}, figred, line width=1.4pt] (1.3000,3.3800) -- (1.3000,3.8600);
\draw[-{Stealth[length=6pt,width=5pt]Stealth[length=6pt,width=5pt]}, black!75, line width=0.9pt] (73:2.6000) arc (73:69:2.6000);
\draw[-{Stealth[length=6pt,width=5pt]Stealth[length=6pt,width=5pt]}, black!75, line width=0.9pt] (107:2.6000) arc (107:111:2.6000);
\draw[figred, line width=1.5pt] (0,2.6000) -- (0,4.8100);
\draw[figred, line width=1.5pt] (1.3000,2.2517) -- (1.3000,4.8100);
\draw[figred, line width=1.5pt, dash pattern=on 3pt off 2.4pt] (-1.3000,2.2517) -- (-1.3000,4.8100);
\fill[figblue] (1.3000,2.2517) circle (2.2pt);
\node[figblue, right, inner sep=3pt] at (1.3000,2.2517) {$e^{i\pi/3}$};
\draw[figblue, line width=0.9pt, fill=white] (-1.3000,2.2517) circle (2.0pt);
\fill[figblue] (0,2.6000) circle (2.2pt);
\node[figblue, below right, inner sep=2.5pt] at (0.02,2.6000) {$i$};
\node[below] at (-1.3000,-0.05) {$-\tfrac12$};
\node[below] at (1.3000,-0.05) {$\tfrac12$};
\node[below] at (0,-0.05) {$0$};

%% file: figs/universal-square.tex
\providecolor{figblue}{RGB}{40,90,160}
\providecolor{figred}{RGB}{170,60,50}
\tikzset{
  ucover/.style={gray!45, line width=0.2pt},
  unet/.style={gray!70, line width=0.3pt},
  uout/.style={black, line width=0.9pt, line join=round},
  ucut/.style={black, line width=0.9pt, dash pattern=on 2.4pt off 1.8pt},
  ulab/.style={font=\tiny, inner sep=0.6pt, fill=white,
               fill opacity=0.72, text opacity=1},
  unum/.style={font=\tiny, text=black!55, inner sep=0.5pt},
  uloop1/.style={figblue, line width=1.4pt, opacity=0.75,
                 line cap=round, line join=round},
  uloop2/.style={figred, line width=1.4pt, opacity=0.75,
                 line cap=round, line join=round},
}
\path[use as bounding box] (-3.0000,-3.0000) rectangle (3.0000,3.0000);
\begin{scope}
\clip (-3.0000,-3.0000) rectangle (3.0000,3.0000);
\draw[ucover] (-0.5774,0.0000) -- (0.5774,0.0000) -- (0.0000,1.0000) -- cycle;
\draw[ucover] (0.5774,0.0000) -- (-0.5774,0.0000) -- (0.0000,-1.0000) -- cycle;
\draw[ucover] (0.0000,1.0000) -- (0.5774,0.0000) -- (1.1547,1.0000) -- cycle;
\draw[ucover] (-0.5774,0.0000) -- (0.0000,1.0000) -- (-1.1547,1.0000) -- cycle;
\draw[ucover] (0.0000,-1.0000) -- (-0.5774,0.0000) -- (-1.1547,-1.0000) -- cycle;
\draw[ucover] (0.5774,0.0000) -- (0.0000,-1.0000) -- (1.1547,-1.0000) -- cycle;
\draw[ucover] (1.1547,1.0000) -- (0.5774,0.0000) -- (1.7321,0.0000) -- cycle;
\draw[ucover] (0.0000,1.0000) -- (1.1547,1.0000) -- (0.5774,2.0000) -- cycle;
\draw[ucover] (-1.1547,1.0000) -- (0.0000,1.0000) -- (-0.5774,2.0000) -- cycle;
\draw[ucover] (-0.5774,0.0000) -- (-1.1547,1.0000) -- (-1.7321,0.0000) -- cycle;
\draw[ucover] (-1.1547,-1.0000) -- (-0.5774,0.0000) -- (-1.7321,0.0000) -- cycle;
\draw[ucover] (0.0000,-1.0000) -- (-1.1547,-1.0000) -- (-0.5774,-2.0000) -- cycle;
\draw[ucover] (1.1547,-1.0000) -- (0.0000,-1.0000) -- (0.5774,-2.0000) -- cycle;
\draw[ucover] (0.5774,0.0000) -- (1.1547,-1.0000) -- (1.7321,0.0000) -- cycle;
\draw[ucover] (1.1547,1.0000) -- (1.7321,0.0000) -- (2.3094,1.0000) -- cycle;
\draw[ucover] (0.5774,2.0000) -- (1.1547,1.0000) -- (1.7321,2.0000) -- cycle;
\draw[ucover] (0.0000,1.0000) -- (0.5774,2.0000) -- (-0.5774,2.0000) -- cycle;
\draw[ucover] (-1.1547,1.0000) -- (-0.5774,2.0000) -- (-1.7321,2.0000) -- cycle;
\draw[ucover] (-1.7321,0.0000) -- (-1.1547,1.0000) -- (-2.3094,1.0000) -- cycle;
\draw[ucover] (-1.1547,-1.0000) -- (-1.7321,0.0000) -- (-2.3094,-1.0000) -- cycle;
\draw[ucover] (-0.5774,-2.0000) -- (-1.1547,-1.0000) -- (-1.7321,-2.0000) -- cycle;
\draw[ucover] (0.0000,-1.0000) -- (-0.5774,-2.0000) -- (0.5774,-2.0000) -- cycle;
\draw[ucover] (1.1547,-1.0000) -- (0.5774,-2.0000) -- (1.7321,-2.0000) -- cycle;
\draw[ucover] (1.7321,0.0000) -- (1.1547,-1.0000) -- (2.3094,-1.0000) -- cycle;
\draw[ucover] (2.3094,1.0000) -- (1.7321,0.0000) -- (2.8868,0.0000) -- cycle;
\draw[ucover] (1.1547,1.0000) -- (2.3094,1.0000) -- (1.7321,2.0000) -- cycle;
\draw[ucover] (0.5774,2.0000) -- (1.7321,2.0000) -- (1.1547,3.0000) -- cycle;
\draw[ucover] (-0.5774,2.0000) -- (0.5774,2.0000) -- (0.0000,3.0000) -- cycle;
\draw[ucover] (-1.7321,2.0000) -- (-0.5774,2.0000) -- (-1.1547,3.0000) -- cycle;
\draw[ucover] (-1.1547,1.0000) -- (-1.7321,2.0000) -- (-2.3094,1.0000) -- cycle;
\draw[ucover] (-1.7321,0.0000) -- (-2.3094,1.0000) -- (-2.8868,0.0000) -- cycle;
\draw[ucover] (-2.3094,-1.0000) -- (-1.7321,0.0000) -- (-2.8868,0.0000) -- cycle;
\draw[ucover] (-1.1547,-1.0000) -- (-2.3094,-1.0000) -- (-1.7321,-2.0000) -- cycle;
\draw[ucover] (-0.5774,-2.0000) -- (-1.7321,-2.0000) -- (-1.1547,-3.0000) -- cycle;
\draw[ucover] (0.5774,-2.0000) -- (-0.5774,-2.0000) -- (0.0000,-3.0000) -- cycle;
\draw[ucover] (1.7321,-2.0000) -- (0.5774,-2.0000) -- (1.1547,-3.0000) -- cycle;
\draw[ucover] (1.1547,-1.0000) -- (1.7321,-2.0000) -- (2.3094,-1.0000) -- cycle;
\draw[ucover] (1.7321,0.0000) -- (2.3094,-1.0000) -- (2.8868,0.0000) -- cycle;
\draw[ucover] (2.3094,1.0000) -- (2.8868,0.0000) -- (3.4641,1.0000) -- cycle;
\draw[ucover] (1.7321,2.0000) -- (2.3094,1.0000) -- (2.8868,2.0000) -- cycle;
\draw[ucover] (1.1547,3.0000) -- (1.7321,2.0000) -- (2.3094,3.0000) -- cycle;
\draw[ucover] (0.5774,2.0000) -- (1.1547,3.0000) -- (0.0000,3.0000) -- cycle;
\draw[ucover] (-0.5774,2.0000) -- (0.0000,3.0000) -- (-1.1547,3.0000) -- cycle;
\draw[ucover] (-1.7321,2.0000) -- (-1.1547,3.0000) -- (-2.3094,3.0000) -- cycle;
\draw[ucover] (-2.3094,1.0000) -- (-1.7321,2.0000) -- (-2.8868,2.0000) -- cycle;
\draw[ucover] (-2.8868,0.0000) -- (-2.3094,1.0000) -- (-3.4641,1.0000) -- cycle;
\draw[ucover] (-2.3094,-1.0000) -- (-2.8868,0.0000) -- (-3.4641,-1.0000) -- cycle;
\draw[ucover] (-1.7321,-2.0000) -- (-2.3094,-1.0000) -- (-2.8868,-2.0000) -- cycle;
\draw[ucover] (-1.1547,-3.0000) -- (-1.7321,-2.0000) -- (-2.3094,-3.0000) -- cycle;
\draw[ucover] (-0.5774,-2.0000) -- (-1.1547,-3.0000) -- (0.0000,-3.0000) -- cycle;
\draw[ucover] (0.5774,-2.0000) -- (0.0000,-3.0000) -- (1.1547,-3.0000) -- cycle;
\draw[ucover] (1.7321,-2.0000) -- (1.1547,-3.0000) -- (2.3094,-3.0000) -- cycle;
\draw[ucover] (2.3094,-1.0000) -- (1.7321,-2.0000) -- (2.8868,-2.0000) -- cycle;
\draw[ucover] (2.8868,0.0000) -- (2.3094,-1.0000) -- (3.4641,-1.0000) -- cycle;
\draw[ucover] (3.4641,1.0000) -- (2.8868,0.0000) -- (4.0415,0.0000) -- cycle;
\draw[ucover] (2.3094,1.0000) -- (3.4641,1.0000) -- (2.8868,2.0000) -- cycle;
\draw[ucover] (1.7321,2.0000) -- (2.8868,2.0000) -- (2.3094,3.0000) -- cycle;
\draw[ucover] (1.1547,3.0000) -- (2.3094,3.0000) -- (1.7321,4.0000) -- cycle;
\draw[ucover] (0.0000,3.0000) -- (1.1547,3.0000) -- (0.5774,4.0000) -- cycle;
\draw[ucover] (-1.1547,3.0000) -- (0.0000,3.0000) -- (-0.5774,4.0000) -- cycle;
\draw[ucover] (-2.3094,3.0000) -- (-1.1547,3.0000) -- (-1.7321,4.0000) -- cycle;
\draw[ucover] (-1.7321,2.0000) -- (-2.3094,3.0000) -- (-2.8868,2.0000) -- cycle;
\draw[ucover] (-2.3094,1.0000) -- (-2.8868,2.0000) -- (-3.4641,1.0000) -- cycle;
\draw[ucover] (-2.8868,0.0000) -- (-3.4641,1.0000) -- (-4.0415,0.0000) -- cycle;
\draw[ucover] (-3.4641,-1.0000) -- (-2.8868,0.0000) -- (-4.0415,0.0000) -- cycle;
\draw[ucover] (-2.3094,-1.0000) -- (-3.4641,-1.0000) -- (-2.8868,-2.0000) -- cycle;
\draw[ucover] (-1.7321,-2.0000) -- (-2.8868,-2.0000) -- (-2.3094,-3.0000) -- cycle;
\draw[ucover] (-1.1547,-3.0000) -- (-2.3094,-3.0000) -- (-1.7321,-4.0000) -- cycle;
\draw[ucover] (0.0000,-3.0000) -- (-1.1547,-3.0000) -- (-0.5774,-4.0000) -- cycle;
\draw[ucover] (1.1547,-3.0000) -- (0.0000,-3.0000) -- (0.5774,-4.0000) -- cycle;
\draw[ucover] (2.3094,-3.0000) -- (1.1547,-3.0000) -- (1.7321,-4.0000) -- cycle;
\draw[ucover] (1.7321,-2.0000) -- (2.3094,-3.0000) -- (2.8868,-2.0000) -- cycle;
\draw[ucover] (2.3094,-1.0000) -- (2.8868,-2.0000) -- (3.4641,-1.0000) -- cycle;
\draw[ucover] (2.8868,0.0000) -- (3.4641,-1.0000) -- (4.0415,0.0000) -- cycle;
\draw[ucover] (2.8868,2.0000) -- (3.4641,1.0000) -- (4.0415,2.0000) -- cycle;
\draw[ucover] (2.3094,3.0000) -- (2.8868,2.0000) -- (3.4641,3.0000) -- cycle;
\draw[ucover] (1.7321,4.0000) -- (2.3094,3.0000) -- (2.8868,4.0000) -- cycle;
\draw[ucover] (1.1547,3.0000) -- (1.7321,4.0000) -- (0.5774,4.0000) -- cycle;
\draw[ucover] (0.0000,3.0000) -- (0.5774,4.0000) -- (-0.5774,4.0000) -- cycle;
\draw[ucover] (-1.1547,3.0000) -- (-0.5774,4.0000) -- (-1.7321,4.0000) -- cycle;
\draw[ucover] (-2.3094,3.0000) -- (-1.7321,4.0000) -- (-2.8868,4.0000) -- cycle;
\draw[ucover] (-2.8868,2.0000) -- (-2.3094,3.0000) -- (-3.4641,3.0000) -- cycle;
\draw[ucover] (-3.4641,1.0000) -- (-2.8868,2.0000) -- (-4.0415,2.0000) -- cycle;
\draw[ucover] (-2.8868,-2.0000) -- (-3.4641,-1.0000) -- (-4.0415,-2.0000) -- cycle;
\draw[ucover] (-2.3094,-3.0000) -- (-2.8868,-2.0000) -- (-3.4641,-3.0000) -- cycle;
\draw[ucover] (-1.7321,-4.0000) -- (-2.3094,-3.0000) -- (-2.8868,-4.0000) -- cycle;
\draw[ucover] (-1.1547,-3.0000) -- (-1.7321,-4.0000) -- (-0.5774,-4.0000) -- cycle;
\draw[ucover] (0.0000,-3.0000) -- (-0.5774,-4.0000) -- (0.5774,-4.0000) -- cycle;
\draw[ucover] (1.1547,-3.0000) -- (0.5774,-4.0000) -- (1.7321,-4.0000) -- cycle;
\draw[ucover] (2.3094,-3.0000) -- (1.7321,-4.0000) -- (2.8868,-4.0000) -- cycle;
\draw[ucover] (2.8868,-2.0000) -- (2.3094,-3.0000) -- (3.4641,-3.0000) -- cycle;
\draw[ucover] (3.4641,-1.0000) -- (2.8868,-2.0000) -- (4.0415,-2.0000) -- cycle;
\draw[ucover] (2.8868,2.0000) -- (4.0415,2.0000) -- (3.4641,3.0000) -- cycle;
\draw[ucover] (2.3094,3.0000) -- (3.4641,3.0000) -- (2.8868,4.0000) -- cycle;
\draw[ucover] (-2.3094,3.0000) -- (-2.8868,4.0000) -- (-3.4641,3.0000) -- cycle;
\draw[ucover] (-2.8868,2.0000) -- (-3.4641,3.0000) -- (-4.0415,2.0000) -- cycle;
\draw[ucover] (-2.8868,-2.0000) -- (-4.0415,-2.0000) -- (-3.4641,-3.0000) -- cycle;
\draw[ucover] (-2.3094,-3.0000) -- (-3.4641,-3.0000) -- (-2.8868,-4.0000) -- cycle;
\draw[ucover] (2.3094,-3.0000) -- (2.8868,-4.0000) -- (3.4641,-3.0000) -- cycle;
\draw[ucover] (2.8868,-2.0000) -- (3.4641,-3.0000) -- (4.0415,-2.0000) -- cycle;
\draw[ucover] (2.8868,4.0000) -- (3.4641,3.0000) -- (4.0415,4.0000) -- cycle;
\draw[ucover] (-3.4641,3.0000) -- (-2.8868,4.0000) -- (-4.0415,4.0000) -- cycle;
\draw[ucover] (-2.8868,-4.0000) -- (-3.4641,-3.0000) -- (-4.0415,-4.0000) -- cycle;
\draw[ucover] (3.4641,-3.0000) -- (2.8868,-4.0000) -- (4.0415,-4.0000) -- cycle;
\fill[figblue, fill opacity=0.09] (-0.5774,0.0000) -- (0.5774,0.0000) -- (0.0000,1.0000) -- cycle;
\draw[unet] (-0.5774,0.0000) -- (0.5774,0.0000) -- (0.0000,1.0000) -- cycle;
\fill[figblue, fill opacity=0.09] (-0.5774,0.0000) -- (0.0000,-1.0000) -- (0.5774,0.0000) -- cycle;
\draw[unet] (-0.5774,0.0000) -- (0.0000,-1.0000) -- (0.5774,0.0000) -- cycle;
\fill[figblue, fill opacity=0.09] (-0.5774,0.0000) -- (0.0000,1.0000) -- (-1.1547,1.0000) -- cycle;
\draw[unet] (-0.5774,0.0000) -- (0.0000,1.0000) -- (-1.1547,1.0000) -- cycle;
\fill[figblue, fill opacity=0.09] (-0.5774,0.0000) -- (-1.1547,-1.0000) -- (0.0000,-1.0000) -- cycle;
\draw[unet] (-0.5774,0.0000) -- (-1.1547,-1.0000) -- (0.0000,-1.0000) -- cycle;
\fill[figblue, fill opacity=0.09] (-0.5774,0.0000) -- (-1.1547,1.0000) -- (-1.7321,0.0000) -- cycle;
\draw[unet] (-0.5774,0.0000) -- (-1.1547,1.0000) -- (-1.7321,0.0000) -- cycle;
\fill[figblue, fill opacity=0.09] (-0.5774,0.0000) -- (-1.7321,0.0000) -- (-1.1547,-1.0000) -- cycle;
\draw[unet] (-0.5774,0.0000) -- (-1.7321,0.0000) -- (-1.1547,-1.0000) -- cycle;
\fill[figblue, fill opacity=0.09] (0.5774,0.0000) -- (1.1547,1.0000) -- (0.0000,1.0000) -- cycle;
\draw[unet] (0.5774,0.0000) -- (1.1547,1.0000) -- (0.0000,1.0000) -- cycle;
\fill[figblue, fill opacity=0.09] (0.5774,0.0000) -- (0.0000,-1.0000) -- (1.1547,-1.0000) -- cycle;
\draw[unet] (0.5774,0.0000) -- (0.0000,-1.0000) -- (1.1547,-1.0000) -- cycle;
\fill[figblue, fill opacity=0.09] (0.5774,0.0000) -- (1.1547,-1.0000) -- (1.7321,0.0000) -- cycle;
\draw[unet] (0.5774,0.0000) -- (1.1547,-1.0000) -- (1.7321,0.0000) -- cycle;
\fill[figblue, fill opacity=0.09] (0.5774,0.0000) -- (1.7321,0.0000) -- (1.1547,1.0000) -- cycle;
\draw[unet] (0.5774,0.0000) -- (1.7321,0.0000) -- (1.1547,1.0000) -- cycle;
\fill[figblue, fill opacity=0.09] (0.0000,1.0000) -- (-0.5774,2.0000) -- (-1.1547,1.0000) -- cycle;
\draw[unet] (0.0000,1.0000) -- (-0.5774,2.0000) -- (-1.1547,1.0000) -- cycle;
\fill[figblue, fill opacity=0.09] (0.0000,1.0000) -- (1.1547,1.0000) -- (0.5774,2.0000) -- cycle;
\draw[unet] (0.0000,1.0000) -- (1.1547,1.0000) -- (0.5774,2.0000) -- cycle;
\fill[figblue, fill opacity=0.09] (0.0000,1.0000) -- (0.5774,2.0000) -- (-0.5774,2.0000) -- cycle;
\draw[unet] (0.0000,1.0000) -- (0.5774,2.0000) -- (-0.5774,2.0000) -- cycle;
\fill[figblue, fill opacity=0.09] (0.0000,-1.0000) -- (0.5774,-2.0000) -- (1.1547,-1.0000) -- cycle;
\draw[unet] (0.0000,-1.0000) -- (0.5774,-2.0000) -- (1.1547,-1.0000) -- cycle;
\fill[figblue, fill opacity=0.09] (0.0000,-1.0000) -- (-1.1547,-1.0000) -- (-0.5774,-2.0000) -- cycle;
\draw[unet] (0.0000,-1.0000) -- (-1.1547,-1.0000) -- (-0.5774,-2.0000) -- cycle;
\fill[figblue, fill opacity=0.09] (0.0000,-1.0000) -- (-0.5774,-2.0000) -- (0.5774,-2.0000) -- cycle;
\draw[unet] (0.0000,-1.0000) -- (-0.5774,-2.0000) -- (0.5774,-2.0000) -- cycle;
\draw[uout] (-1.1547,1.0000) -- (-1.7321,0.0000) -- (-1.1547,-1.0000) -- (-0.5774,-2.0000) -- (0.5774,-2.0000) -- (1.1547,-1.0000) -- (1.7321,0.0000) -- (1.1547,1.0000) -- (0.5774,2.0000) -- (-0.5774,2.0000) -- cycle;
\draw[ucut] (0.5774,0.0000) -- (1.7321,0.0000);
\draw[ucut] (0.0000,1.0000) -- (-0.5774,2.0000);
\draw[ucut] (-1.7321,0.0000) -- (-0.5774,0.0000);
\draw[ucut] (0.0000,-1.0000) -- (0.5774,-2.0000);
\draw[uloop1] (1.1547,-1.0000) -- (0.0000,-1.0000) -- (-0.5774,0.0000) -- (-1.1547,1.0000);
\draw[uloop2] (1.1547,1.0000) -- (0.5774,0.0000) -- (0.0000,-1.0000) -- (-1.1547,-1.0000);
\end{scope}
\node[unum] at (0.0000,0.3333) {0};
\node[unum] at (0.0000,-0.3333) {1};
\node[unum] at (-0.5774,0.6667) {3};
\node[unum] at (-0.5774,-0.6667) {4};
\node[unum] at (-1.1547,0.3333) {9};
\node[unum] at (-1.1547,-0.3333) {10};
\node[unum] at (0.5774,0.6667) {2};
\node[unum] at (0.5774,-0.6667) {5};
\node[unum] at (1.1547,-0.3333) {13};
\node[unum] at (1.1547,0.3333) {6};
\node[unum] at (-0.5774,1.3333) {8};
\node[unum] at (0.5774,1.3333) {7};
\node[unum] at (0.0000,1.6667) {14};
\node[unum] at (0.5774,-1.3333) {12};
\node[unum] at (-0.5774,-1.3333) {11};
\node[unum] at (0.0000,-1.6667) {15};
\node[ulab] at (-1.7321,-2.0000) {$5$};
\node[ulab] at (-0.5774,-2.0000) {$7$};
\node[ulab] at (0.5774,-2.0000) {$1$};
\node[ulab] at (1.7321,-2.0000) {$3$};
\node[ulab] at (-2.3094,-1.0000) {$6$};
\node[ulab] at (-1.1547,-1.0000) {$0$};
\node[ulab] at (0.0000,-1.0000) {$2$};
\node[ulab] at (1.1547,-1.0000) {$4$};
\node[ulab] at (2.3094,-1.0000) {$6$};
\node[ulab] at (-1.7321,0.0000) {$1$};
\node[ulab] at (-0.5774,0.0000) {$3$};
\node[ulab] at (0.5774,0.0000) {$5$};
\node[ulab] at (1.7321,0.0000) {$7$};
\node[ulab] at (-2.3094,1.0000) {$2$};
\node[ulab] at (-1.1547,1.0000) {$4$};
\node[ulab] at (0.0000,1.0000) {$6$};
\node[ulab] at (1.1547,1.0000) {$0$};
\node[ulab] at (2.3094,1.0000) {$2$};
\node[ulab] at (-1.7321,2.0000) {$5$};
\node[ulab] at (-0.5774,2.0000) {$7$};
\node[ulab] at (0.5774,2.0000) {$1$};
\node[ulab] at (1.7321,2.0000) {$3$};

%% file: figs/universal-hex.tex
\providecolor{figblue}{RGB}{40,90,160}
\providecolor{figred}{RGB}{170,60,50}
\tikzset{
  ucover/.style={gray!45, line width=0.2pt},
  unet/.style={gray!70, line width=0.3pt},
  uout/.style={black, line width=0.9pt, line join=round},
  ucut/.style={black, line width=0.9pt, dash pattern=on 2.4pt off 1.8pt},
  ulab/.style={font=\tiny, inner sep=0.6pt, fill=white,
               fill opacity=0.72, text opacity=1},
  unum/.style={font=\tiny, text=black!55, inner sep=0.5pt},
  uloop1/.style={figblue, line width=1.4pt, opacity=0.75,
                 line cap=round, line join=round},
  uloop2/.style={figred, line width=1.4pt, opacity=0.75,
                 line cap=round, line join=round},
}
\path[use as bounding box] (-3.0000,-3.0000) rectangle (3.0000,3.0000);
\begin{scope}
\clip (-3.0000,-3.0000) rectangle (3.0000,3.0000);
\draw[ucover] (-3.9082,-0.1538) -- (-2.9756,-0.3333) -- (-3.7700,0.5641) -- cycle;
\draw[ucover] (-3.9082,-0.1538) -- (-4.0464,-0.8718) -- (-2.9756,-0.3333) -- cycle;
\draw[ucover] (-2.9756,-0.3333) -- (-2.6055,0.8718) -- (-3.7700,0.5641) -- cycle;
\draw[ucover] (-2.9756,-0.3333) -- (-4.0464,-0.8718) -- (-3.0792,-1.5897) -- cycle;
\draw[ucover] (-2.9756,-0.3333) -- (-1.5248,0.0256) -- (-2.6055,0.8718) -- cycle;
\draw[ucover] (-2.9756,-0.3333) -- (-1.8949,-1.1795) -- (-1.5248,0.0256) -- cycle;
\draw[ucover] (-2.9756,-0.3333) -- (-3.0792,-1.5897) -- (-1.8949,-1.1795) -- cycle;
\draw[ucover] (-3.7700,0.5641) -- (-2.6055,0.8718) -- (-3.5529,1.6923) -- cycle;
\draw[ucover] (-2.3884,2.0000) -- (-3.5529,1.6923) -- (-2.6055,0.8718) -- cycle;
\draw[ucover] (-1.5248,0.0256) -- (-1.4212,1.2821) -- (-2.6055,0.8718) -- cycle;
\draw[ucover] (-2.2502,2.7179) -- (-1.3175,2.5385) -- (-2.1120,3.4359) -- cycle;
\draw[ucover] (-2.2502,2.7179) -- (-2.3884,2.0000) -- (-1.3175,2.5385) -- cycle;
\draw[ucover] (-2.2502,2.7179) -- (-2.1120,3.4359) -- (-3.1828,2.8974) -- cycle;
\draw[ucover] (-2.2502,2.7179) -- (-3.1828,2.8974) -- (-2.3884,2.0000) -- cycle;
\draw[ucover] (-1.3175,2.5385) -- (-0.9474,3.7436) -- (-2.1120,3.4359) -- cycle;
\draw[ucover] (-1.3175,2.5385) -- (-2.3884,2.0000) -- (-1.4212,1.2821) -- cycle;
\draw[ucover] (-1.3175,2.5385) -- (0.1332,2.8974) -- (-0.9474,3.7436) -- cycle;
\draw[ucover] (-1.3175,2.5385) -- (-0.2369,1.6923) -- (0.1332,2.8974) -- cycle;
\draw[ucover] (-1.3175,2.5385) -- (-1.4212,1.2821) -- (-0.2369,1.6923) -- cycle;
\draw[ucover] (-2.1120,3.4359) -- (-3.0792,4.1538) -- (-3.1828,2.8974) -- cycle;
\draw[ucover] (-2.3884,2.0000) -- (-3.1828,2.8974) -- (-3.5529,1.6923) -- cycle;
\draw[ucover] (-2.3884,2.0000) -- (-2.6055,0.8718) -- (-1.4212,1.2821) -- cycle;
\draw[ucover] (0.1332,2.8974) -- (0.2369,4.1538) -- (-0.9474,3.7436) -- cycle;
\draw[ucover] (-2.2502,-3.0256) -- (-1.3175,-3.2051) -- (-2.1120,-2.3077) -- cycle;
\draw[ucover] (-2.2502,-3.0256) -- (-2.1120,-2.3077) -- (-3.1828,-2.8462) -- cycle;
\draw[ucover] (-2.2502,-3.0256) -- (-3.1828,-2.8462) -- (-2.3884,-3.7436) -- cycle;
\draw[ucover] (-1.3175,-3.2051) -- (-0.9474,-2.0000) -- (-2.1120,-2.3077) -- cycle;
\draw[ucover] (-1.3175,-3.2051) -- (0.1332,-2.8462) -- (-0.9474,-2.0000) -- cycle;
\draw[ucover] (-1.3175,-3.2051) -- (-0.2369,-4.0513) -- (0.1332,-2.8462) -- cycle;
\draw[ucover] (-2.1120,-2.3077) -- (-3.0792,-1.5897) -- (-3.1828,-2.8462) -- cycle;
\draw[ucover] (-2.1120,-2.3077) -- (-0.9474,-2.0000) -- (-1.8949,-1.1795) -- cycle;
\draw[ucover] (-2.1120,-2.3077) -- (-1.8949,-1.1795) -- (-3.0792,-1.5897) -- cycle;
\draw[ucover] (-2.3884,-3.7436) -- (-3.1828,-2.8462) -- (-3.5529,-4.0513) -- cycle;
\draw[ucover] (-0.7303,-0.8718) -- (-1.8949,-1.1795) -- (-0.9474,-2.0000) -- cycle;
\draw[ucover] (0.1332,-2.8462) -- (0.2369,-1.5897) -- (-0.9474,-2.0000) -- cycle;
\draw[ucover] (-0.5922,-0.1538) -- (0.3405,-0.3333) -- (-0.4540,0.5641) -- cycle;
\draw[ucover] (-0.5922,-0.1538) -- (-0.7303,-0.8718) -- (0.3405,-0.3333) -- cycle;
\draw[ucover] (-0.5922,-0.1538) -- (-0.4540,0.5641) -- (-1.5248,0.0256) -- cycle;
\draw[ucover] (-0.5922,-0.1538) -- (-1.5248,0.0256) -- (-0.7303,-0.8718) -- cycle;
\draw[ucover] (0.3405,-0.3333) -- (0.7106,0.8718) -- (-0.4540,0.5641) -- cycle;
\draw[ucover] (0.3405,-0.3333) -- (-0.7303,-0.8718) -- (0.2369,-1.5897) -- cycle;
\draw[ucover] (0.3405,-0.3333) -- (1.7913,0.0256) -- (0.7106,0.8718) -- cycle;
\draw[ucover] (0.3405,-0.3333) -- (1.4212,-1.1795) -- (1.7913,0.0256) -- cycle;
\draw[ucover] (0.3405,-0.3333) -- (0.2369,-1.5897) -- (1.4212,-1.1795) -- cycle;
\draw[ucover] (-0.4540,0.5641) -- (-1.4212,1.2821) -- (-1.5248,0.0256) -- cycle;
\draw[ucover] (-0.4540,0.5641) -- (0.7106,0.8718) -- (-0.2369,1.6923) -- cycle;
\draw[ucover] (-0.4540,0.5641) -- (-0.2369,1.6923) -- (-1.4212,1.2821) -- cycle;
\draw[ucover] (-0.7303,-0.8718) -- (-1.5248,0.0256) -- (-1.8949,-1.1795) -- cycle;
\draw[ucover] (-0.7303,-0.8718) -- (-0.9474,-2.0000) -- (0.2369,-1.5897) -- cycle;
\draw[ucover] (0.9277,2.0000) -- (-0.2369,1.6923) -- (0.7106,0.8718) -- cycle;
\draw[ucover] (1.7913,0.0256) -- (1.8949,1.2821) -- (0.7106,0.8718) -- cycle;
\draw[ucover] (1.0659,2.7179) -- (1.9985,2.5385) -- (1.2040,3.4359) -- cycle;
\draw[ucover] (1.0659,2.7179) -- (0.9277,2.0000) -- (1.9985,2.5385) -- cycle;
\draw[ucover] (1.0659,2.7179) -- (1.2040,3.4359) -- (0.1332,2.8974) -- cycle;
\draw[ucover] (1.0659,2.7179) -- (0.1332,2.8974) -- (0.9277,2.0000) -- cycle;
\draw[ucover] (1.9985,2.5385) -- (2.3686,3.7436) -- (1.2040,3.4359) -- cycle;
\draw[ucover] (1.9985,2.5385) -- (0.9277,2.0000) -- (1.8949,1.2821) -- cycle;
\draw[ucover] (1.9985,2.5385) -- (3.4493,2.8974) -- (2.3686,3.7436) -- cycle;
\draw[ucover] (1.9985,2.5385) -- (3.0792,1.6923) -- (3.4493,2.8974) -- cycle;
\draw[ucover] (1.9985,2.5385) -- (1.8949,1.2821) -- (3.0792,1.6923) -- cycle;
\draw[ucover] (1.2040,3.4359) -- (0.2369,4.1538) -- (0.1332,2.8974) -- cycle;
\draw[ucover] (0.9277,2.0000) -- (0.1332,2.8974) -- (-0.2369,1.6923) -- cycle;
\draw[ucover] (0.9277,2.0000) -- (0.7106,0.8718) -- (1.8949,1.2821) -- cycle;
\draw[ucover] (3.4493,2.8974) -- (3.5529,4.1538) -- (2.3686,3.7436) -- cycle;
\draw[ucover] (1.0659,-3.0256) -- (1.9985,-3.2051) -- (1.2040,-2.3077) -- cycle;
\draw[ucover] (1.0659,-3.0256) -- (1.2040,-2.3077) -- (0.1332,-2.8462) -- cycle;
\draw[ucover] (1.0659,-3.0256) -- (0.1332,-2.8462) -- (0.9277,-3.7436) -- cycle;
\draw[ucover] (1.9985,-3.2051) -- (2.3686,-2.0000) -- (1.2040,-2.3077) -- cycle;
\draw[ucover] (1.9985,-3.2051) -- (3.4493,-2.8462) -- (2.3686,-2.0000) -- cycle;
\draw[ucover] (1.9985,-3.2051) -- (3.0792,-4.0513) -- (3.4493,-2.8462) -- cycle;
\draw[ucover] (1.2040,-2.3077) -- (0.2369,-1.5897) -- (0.1332,-2.8462) -- cycle;
\draw[ucover] (1.2040,-2.3077) -- (2.3686,-2.0000) -- (1.4212,-1.1795) -- cycle;
\draw[ucover] (1.2040,-2.3077) -- (1.4212,-1.1795) -- (0.2369,-1.5897) -- cycle;
\draw[ucover] (0.9277,-3.7436) -- (0.1332,-2.8462) -- (-0.2369,-4.0513) -- cycle;
\draw[ucover] (2.5857,-0.8718) -- (1.4212,-1.1795) -- (2.3686,-2.0000) -- cycle;
\draw[ucover] (3.4493,-2.8462) -- (3.5529,-1.5897) -- (2.3686,-2.0000) -- cycle;
\draw[ucover] (2.7239,-0.1538) -- (3.6566,-0.3333) -- (2.8621,0.5641) -- cycle;
\draw[ucover] (2.7239,-0.1538) -- (2.5857,-0.8718) -- (3.6566,-0.3333) -- cycle;
\draw[ucover] (2.7239,-0.1538) -- (2.8621,0.5641) -- (1.7913,0.0256) -- cycle;
\draw[ucover] (2.7239,-0.1538) -- (1.7913,0.0256) -- (2.5857,-0.8718) -- cycle;
\draw[ucover] (3.6566,-0.3333) -- (4.0266,0.8718) -- (2.8621,0.5641) -- cycle;
\draw[ucover] (3.6566,-0.3333) -- (2.5857,-0.8718) -- (3.5529,-1.5897) -- cycle;
\draw[ucover] (2.8621,0.5641) -- (1.8949,1.2821) -- (1.7913,0.0256) -- cycle;
\draw[ucover] (2.8621,0.5641) -- (4.0266,0.8718) -- (3.0792,1.6923) -- cycle;
\draw[ucover] (2.8621,0.5641) -- (3.0792,1.6923) -- (1.8949,1.2821) -- cycle;
\draw[ucover] (2.5857,-0.8718) -- (1.7913,0.0256) -- (1.4212,-1.1795) -- cycle;
\draw[ucover] (2.5857,-0.8718) -- (2.3686,-2.0000) -- (3.5529,-1.5897) -- cycle;
\fill[figblue, fill opacity=0.09] (-0.5922,-0.1538) -- (0.3405,-0.3333) -- (-0.4540,0.5641) -- cycle;
\draw[unet] (-0.5922,-0.1538) -- (0.3405,-0.3333) -- (-0.4540,0.5641) -- cycle;
\fill[figblue, fill opacity=0.09] (-0.5922,-0.1538) -- (-0.7303,-0.8718) -- (0.3405,-0.3333) -- cycle;
\draw[unet] (-0.5922,-0.1538) -- (-0.7303,-0.8718) -- (0.3405,-0.3333) -- cycle;
\fill[figblue, fill opacity=0.09] (-0.5922,-0.1538) -- (-0.4540,0.5641) -- (-1.5248,0.0256) -- cycle;
\draw[unet] (-0.5922,-0.1538) -- (-0.4540,0.5641) -- (-1.5248,0.0256) -- cycle;
\fill[figblue, fill opacity=0.09] (-0.5922,-0.1538) -- (-1.5248,0.0256) -- (-0.7303,-0.8718) -- cycle;
\draw[unet] (-0.5922,-0.1538) -- (-1.5248,0.0256) -- (-0.7303,-0.8718) -- cycle;
\fill[figblue, fill opacity=0.09] (0.3405,-0.3333) -- (0.7106,0.8718) -- (-0.4540,0.5641) -- cycle;
\draw[unet] (0.3405,-0.3333) -- (0.7106,0.8718) -- (-0.4540,0.5641) -- cycle;
\fill[figblue, fill opacity=0.09] (0.3405,-0.3333) -- (-0.7303,-0.8718) -- (0.2369,-1.5897) -- cycle;
\draw[unet] (0.3405,-0.3333) -- (-0.7303,-0.8718) -- (0.2369,-1.5897) -- cycle;
\fill[figblue, fill opacity=0.09] (0.3405,-0.3333) -- (1.7913,0.0256) -- (0.7106,0.8718) -- cycle;
\draw[unet] (0.3405,-0.3333) -- (1.7913,0.0256) -- (0.7106,0.8718) -- cycle;
\fill[figblue, fill opacity=0.09] (0.3405,-0.3333) -- (1.4212,-1.1795) -- (1.7913,0.0256) -- cycle;
\draw[unet] (0.3405,-0.3333) -- (1.4212,-1.1795) -- (1.7913,0.0256) -- cycle;
\fill[figblue, fill opacity=0.09] (0.3405,-0.3333) -- (0.2369,-1.5897) -- (1.4212,-1.1795) -- cycle;
\draw[unet] (0.3405,-0.3333) -- (0.2369,-1.5897) -- (1.4212,-1.1795) -- cycle;
\fill[figblue, fill opacity=0.09] (-0.4540,0.5641) -- (-1.4212,1.2821) -- (-1.5248,0.0256) -- cycle;
\draw[unet] (-0.4540,0.5641) -- (-1.4212,1.2821) -- (-1.5248,0.0256) -- cycle;
\fill[figblue, fill opacity=0.09] (-0.4540,0.5641) -- (0.7106,0.8718) -- (-0.2369,1.6923) -- cycle;
\draw[unet] (-0.4540,0.5641) -- (0.7106,0.8718) -- (-0.2369,1.6923) -- cycle;
\fill[figblue, fill opacity=0.09] (-0.4540,0.5641) -- (-0.2369,1.6923) -- (-1.4212,1.2821) -- cycle;
\draw[unet] (-0.4540,0.5641) -- (-0.2369,1.6923) -- (-1.4212,1.2821) -- cycle;
\fill[figblue, fill opacity=0.09] (-0.7303,-0.8718) -- (-1.5248,0.0256) -- (-1.8949,-1.1795) -- cycle;
\draw[unet] (-0.7303,-0.8718) -- (-1.5248,0.0256) -- (-1.8949,-1.1795) -- cycle;
\fill[figblue, fill opacity=0.09] (-0.7303,-0.8718) -- (-0.9474,-2.0000) -- (0.2369,-1.5897) -- cycle;
\draw[unet] (-0.7303,-0.8718) -- (-0.9474,-2.0000) -- (0.2369,-1.5897) -- cycle;
\fill[figblue, fill opacity=0.09] (0.9277,2.0000) -- (-0.2369,1.6923) -- (0.7106,0.8718) -- cycle;
\draw[unet] (0.9277,2.0000) -- (-0.2369,1.6923) -- (0.7106,0.8718) -- cycle;
\fill[figblue, fill opacity=0.09] (1.7913,0.0256) -- (1.8949,1.2821) -- (0.7106,0.8718) -- cycle;
\draw[unet] (1.7913,0.0256) -- (1.8949,1.2821) -- (0.7106,0.8718) -- cycle;
\draw[uout] (1.4212,-1.1795) -- (1.7913,0.0256) -- (1.8949,1.2821) -- (0.7106,0.8718) -- (0.9277,2.0000) -- (-0.2369,1.6923) -- (-1.4212,1.2821) -- (-1.5248,0.0256) -- (-1.8949,-1.1795) -- (-0.7303,-0.8718) -- (-0.9474,-2.0000) -- (0.2369,-1.5897) -- cycle;
\draw[ucut] (-1.5248,0.0256) -- (-0.5922,-0.1538);
\draw[ucut] (-0.4540,0.5641) -- (-1.4212,1.2821);
\draw[ucut] (1.4212,-1.1795) -- (0.3405,-0.3333);
\draw[uloop1] (-1.5248,0.0256) -- (-0.5922,-0.1538) -- (0.3405,-0.3333) -- (1.7913,0.0256);
\draw[uloop2] (-0.9474,-2.0000) -- (0.2369,-1.5897) -- (0.3405,-0.3333) -- (0.7106,0.8718);
\end{scope}
\node[unum] at (-0.2352,0.0256) {0};
\node[unum] at (-0.3388,-0.5128) {1};
\node[unum] at (-0.8455,0.2051) {3};
\node[unum] at (-0.9491,-0.3333) {4};
\node[unum] at (0.1990,0.3675) {2};
\node[unum] at (-0.0510,-0.9316) {5};
\node[unum] at (0.9474,0.1880) {6};
\node[unum] at (1.1843,-0.4957) {12};
\node[unum] at (0.6662,-1.0342) {11};
\node[unum] at (-1.1333,0.6239) {8};
\node[unum] at (0.0066,1.0427) {7};
\node[unum] at (-0.7040,1.1795) {15};
\node[unum] at (-1.4164,-0.6378) {9};
\node[unum] at (-0.4803,-1.4872) {10};
\node[unum] at (0.4671,1.5214) {14};
\node[unum] at (1.4656,0.7265) {13};
\node[ulab] at (0.1332,-2.8462) {$4$};
\node[ulab] at (-2.1120,-2.3077) {$2$};
\node[ulab] at (1.2040,-2.3077) {$2$};
\node[ulab] at (-0.9474,-2.0000) {$5$};
\node[ulab] at (2.3686,-2.0000) {$5$};
\node[ulab] at (0.2369,-1.5897) {$6$};
\node[ulab] at (-1.8949,-1.1795) {$7$};
\node[ulab] at (1.4212,-1.1795) {$7$};
\node[ulab] at (-0.7303,-0.8718) {$3$};
\node[ulab] at (2.5857,-0.8718) {$3$};
\node[ulab] at (0.3405,-0.3333) {$1$};
\node[ulab] at (-0.5922,-0.1538) {$0$};
\node[ulab] at (2.7239,-0.1538) {$0$};
\node[ulab] at (-1.5248,0.0256) {$4$};
\node[ulab] at (1.7913,0.0256) {$4$};
\node[ulab] at (-0.4540,0.5641) {$2$};
\node[ulab] at (2.8621,0.5641) {$2$};
\node[ulab] at (-2.6055,0.8718) {$5$};
\node[ulab] at (0.7106,0.8718) {$5$};
\node[ulab] at (-1.4212,1.2821) {$6$};
\node[ulab] at (1.8949,1.2821) {$6$};
\node[ulab] at (-0.2369,1.6923) {$7$};
\node[ulab] at (-2.3884,2.0000) {$3$};
\node[ulab] at (0.9277,2.0000) {$3$};
\node[ulab] at (-1.3175,2.5385) {$1$};
\node[ulab] at (1.9985,2.5385) {$1$};
\node[ulab] at (-2.2502,2.7179) {$0$};
\node[ulab] at (1.0659,2.7179) {$0$};

%% file: figs/universal-square-wide.tex
\providecolor{figblue}{RGB}{40,90,160}
\providecolor{figred}{RGB}{170,60,50}
\tikzset{
  ucover/.style={gray!45, line width=0.2pt},
  unet/.style={gray!70, line width=0.3pt},
  uout/.style={black, line width=0.9pt, line join=round},
  ucut/.style={black, line width=0.9pt, dash pattern=on 2.4pt off 1.8pt},
  ulab/.style={font=\tiny, inner sep=0.6pt, fill=white,
               fill opacity=0.72, text opacity=1},
  unum/.style={font=\tiny, text=black!55, inner sep=0.5pt},
  uloop1/.style={figblue, line width=1.4pt, opacity=0.75,
                 line cap=round, line join=round},
  uloop2/.style={figred, line width=1.4pt, opacity=0.75,
                 line cap=round, line join=round},
}
\path[use as bounding box] (-3.0000,-3.0000) rectangle (3.0000,3.0000);
\begin{scope}
\clip (-3.0000,-3.0000) rectangle (3.0000,3.0000);
\draw[ucover, opacity=1.000] (0.5774,0.0000) -- (-0.5774,0.0000);
\draw[ucover, opacity=1.000] (0.0000,1.0000) -- (0.5774,0.0000);
\draw[ucover, opacity=1.000] (-0.5774,0.0000) -- (0.0000,1.0000);
\draw[ucover, opacity=1.000] (0.0000,-1.0000) -- (-0.5774,0.0000);
\draw[ucover, opacity=1.000] (0.5774,0.0000) -- (0.0000,-1.0000);
\draw[ucover, opacity=1.000] (1.1547,1.0000) -- (0.5774,0.0000);
\draw[ucover, opacity=1.000] (0.0000,1.0000) -- (1.1547,1.0000);
\draw[ucover, opacity=1.000] (-1.1547,1.0000) -- (0.0000,1.0000);
\draw[ucover, opacity=1.000] (-0.5774,0.0000) -- (-1.1547,1.0000);
\draw[ucover, opacity=1.000] (-1.1547,-1.0000) -- (-0.5774,0.0000);
\draw[ucover, opacity=1.000] (0.0000,-1.0000) -- (-1.1547,-1.0000);
\draw[ucover, opacity=1.000] (1.1547,-1.0000) -- (0.0000,-1.0000);
\draw[ucover, opacity=1.000] (0.5774,0.0000) -- (1.1547,-1.0000);
\draw[ucover, opacity=1.000] (1.7321,0.0000) -- (0.5774,0.0000);
\draw[ucover, opacity=0.898] (1.1547,1.0000) -- (1.7321,0.0000);
\draw[ucover, opacity=0.867] (0.5774,2.0000) -- (1.1547,1.0000);
\draw[ucover, opacity=0.867] (0.0000,1.0000) -- (0.5774,2.0000);
\draw[ucover, opacity=0.867] (-0.5774,2.0000) -- (0.0000,1.0000);
\draw[ucover, opacity=0.867] (-1.1547,1.0000) -- (-0.5774,2.0000);
\draw[ucover, opacity=0.898] (-1.7321,0.0000) -- (-1.1547,1.0000);
\draw[ucover, opacity=1.000] (-0.5774,0.0000) -- (-1.7321,0.0000);
\draw[ucover, opacity=0.898] (-1.1547,-1.0000) -- (-1.7321,0.0000);
\draw[ucover, opacity=0.867] (-0.5774,-2.0000) -- (-1.1547,-1.0000);
\draw[ucover, opacity=0.867] (0.0000,-1.0000) -- (-0.5774,-2.0000);
\draw[ucover, opacity=0.867] (0.5774,-2.0000) -- (0.0000,-1.0000);
\draw[ucover, opacity=0.867] (1.1547,-1.0000) -- (0.5774,-2.0000);
\draw[ucover, opacity=0.898] (1.7321,0.0000) -- (1.1547,-1.0000);
\draw[ucover, opacity=0.577] (2.3094,1.0000) -- (1.7321,0.0000);
\draw[ucover, opacity=0.738] (1.1547,1.0000) -- (2.3094,1.0000);
\draw[ucover, opacity=0.867] (1.7321,2.0000) -- (1.1547,1.0000);
\draw[ucover, opacity=0.589] (0.5774,2.0000) -- (1.7321,2.0000);
\draw[ucover, opacity=0.589] (-0.5774,2.0000) -- (0.5774,2.0000);
\draw[ucover, opacity=0.589] (-1.7321,2.0000) -- (-0.5774,2.0000);
\draw[ucover, opacity=0.867] (-1.1547,1.0000) -- (-1.7321,2.0000);
\draw[ucover, opacity=0.738] (-2.3094,1.0000) -- (-1.1547,1.0000);
\draw[ucover, opacity=0.577] (-1.7321,0.0000) -- (-2.3094,1.0000);
\draw[ucover, opacity=0.577] (-2.3094,-1.0000) -- (-1.7321,0.0000);
\draw[ucover, opacity=0.738] (-1.1547,-1.0000) -- (-2.3094,-1.0000);
\draw[ucover, opacity=0.867] (-1.7321,-2.0000) -- (-1.1547,-1.0000);
\draw[ucover, opacity=0.589] (-0.5774,-2.0000) -- (-1.7321,-2.0000);
\draw[ucover, opacity=0.589] (0.5774,-2.0000) -- (-0.5774,-2.0000);
\draw[ucover, opacity=0.589] (1.7321,-2.0000) -- (0.5774,-2.0000);
\draw[ucover, opacity=0.867] (1.1547,-1.0000) -- (1.7321,-2.0000);
\draw[ucover, opacity=0.738] (2.3094,-1.0000) -- (1.1547,-1.0000);
\draw[ucover, opacity=0.577] (1.7321,0.0000) -- (2.3094,-1.0000);
\draw[ucover, opacity=0.417] (2.8868,0.0000) -- (1.7321,0.0000);
\draw[ucover, opacity=0.257] (2.3094,1.0000) -- (2.8868,0.0000);
\draw[ucover, opacity=0.577] (1.7321,2.0000) -- (2.3094,1.0000);
\draw[ucover, opacity=0.311] (1.1547,3.0000) -- (1.7321,2.0000);
\draw[ucover, opacity=0.311] (0.5774,2.0000) -- (1.1547,3.0000);
\draw[ucover, opacity=0.311] (0.0000,3.0000) -- (0.5774,2.0000);
\draw[ucover, opacity=0.311] (-0.5774,2.0000) -- (0.0000,3.0000);
\draw[ucover, opacity=0.311] (-1.1547,3.0000) -- (-0.5774,2.0000);
\draw[ucover, opacity=0.311] (-1.7321,2.0000) -- (-1.1547,3.0000);
\draw[ucover, opacity=0.577] (-2.3094,1.0000) -- (-1.7321,2.0000);
\draw[ucover, opacity=0.257] (-2.8868,0.0000) -- (-2.3094,1.0000);
\draw[ucover, opacity=0.417] (-1.7321,0.0000) -- (-2.8868,0.0000);
\draw[ucover, opacity=0.257] (-2.3094,-1.0000) -- (-2.8868,0.0000);
\draw[ucover, opacity=0.577] (-1.7321,-2.0000) -- (-2.3094,-1.0000);
\draw[ucover, opacity=0.311] (-1.1547,-3.0000) -- (-1.7321,-2.0000);
\draw[ucover, opacity=0.311] (-0.5774,-2.0000) -- (-1.1547,-3.0000);
\draw[ucover, opacity=0.311] (0.0000,-3.0000) -- (-0.5774,-2.0000);
\draw[ucover, opacity=0.311] (0.5774,-2.0000) -- (0.0000,-3.0000);
\draw[ucover, opacity=0.311] (1.1547,-3.0000) -- (0.5774,-2.0000);
\draw[ucover, opacity=0.311] (1.7321,-2.0000) -- (1.1547,-3.0000);
\draw[ucover, opacity=0.577] (2.3094,-1.0000) -- (1.7321,-2.0000);
\draw[ucover, opacity=0.257] (2.8868,0.0000) -- (2.3094,-1.0000);
\draw[ucover, opacity=0.096] (2.3094,1.0000) -- (3.4641,1.0000);
\draw[ucover, opacity=0.257] (2.8868,2.0000) -- (2.3094,1.0000);
\draw[ucover, opacity=0.417] (1.7321,2.0000) -- (2.8868,2.0000);
\draw[ucover, opacity=0.311] (2.3094,3.0000) -- (1.7321,2.0000);
\draw[ucover, opacity=0.033] (1.1547,3.0000) -- (2.3094,3.0000);
\draw[ucover, opacity=0.033] (0.0000,3.0000) -- (1.1547,3.0000);
\draw[ucover, opacity=0.033] (-1.1547,3.0000) -- (0.0000,3.0000);
\draw[ucover, opacity=0.033] (-2.3094,3.0000) -- (-1.1547,3.0000);
\draw[ucover, opacity=0.311] (-1.7321,2.0000) -- (-2.3094,3.0000);
\draw[ucover, opacity=0.417] (-2.8868,2.0000) -- (-1.7321,2.0000);
\draw[ucover, opacity=0.257] (-2.3094,1.0000) -- (-2.8868,2.0000);
\draw[ucover, opacity=0.096] (-3.4641,1.0000) -- (-2.3094,1.0000);
\draw[ucover, opacity=0.096] (-2.3094,-1.0000) -- (-3.4641,-1.0000);
\draw[ucover, opacity=0.257] (-2.8868,-2.0000) -- (-2.3094,-1.0000);
\draw[ucover, opacity=0.417] (-1.7321,-2.0000) -- (-2.8868,-2.0000);
\draw[ucover, opacity=0.311] (-2.3094,-3.0000) -- (-1.7321,-2.0000);
\draw[ucover, opacity=0.033] (-1.1547,-3.0000) -- (-2.3094,-3.0000);
\draw[ucover, opacity=0.033] (0.0000,-3.0000) -- (-1.1547,-3.0000);
\draw[ucover, opacity=0.033] (1.1547,-3.0000) -- (0.0000,-3.0000);
\draw[ucover, opacity=0.033] (2.3094,-3.0000) -- (1.1547,-3.0000);
\draw[ucover, opacity=0.311] (1.7321,-2.0000) -- (2.3094,-3.0000);
\draw[ucover, opacity=0.417] (2.8868,-2.0000) -- (1.7321,-2.0000);
\draw[ucover, opacity=0.257] (2.3094,-1.0000) -- (2.8868,-2.0000);
\draw[ucover, opacity=0.096] (3.4641,-1.0000) -- (2.3094,-1.0000);
\draw[ucover, opacity=0.257] (2.3094,3.0000) -- (2.8868,2.0000);
\draw[ucover, opacity=0.257] (-2.8868,2.0000) -- (-2.3094,3.0000);
\draw[ucover, opacity=0.257] (-2.3094,-3.0000) -- (-2.8868,-2.0000);
\draw[ucover, opacity=0.257] (2.8868,-2.0000) -- (2.3094,-3.0000);
\draw[ucover, opacity=0.033] (2.3094,3.0000) -- (3.4641,3.0000);
\draw[ucover, opacity=0.033] (-3.4641,3.0000) -- (-2.3094,3.0000);
\draw[ucover, opacity=0.033] (-2.3094,-3.0000) -- (-3.4641,-3.0000);
\draw[ucover, opacity=0.033] (3.4641,-3.0000) -- (2.3094,-3.0000);
\fill[figblue, fill opacity=0.09] (-0.5774,0.0000) -- (0.5774,0.0000) -- (0.0000,1.0000) -- cycle;
\draw[unet] (-0.5774,0.0000) -- (0.5774,0.0000) -- (0.0000,1.0000) -- cycle;
\fill[figblue, fill opacity=0.09] (-0.5774,0.0000) -- (0.0000,-1.0000) -- (0.5774,0.0000) -- cycle;
\draw[unet] (-0.5774,0.0000) -- (0.0000,-1.0000) -- (0.5774,0.0000) -- cycle;
\fill[figblue, fill opacity=0.09] (-0.5774,0.0000) -- (0.0000,1.0000) -- (-1.1547,1.0000) -- cycle;
\draw[unet] (-0.5774,0.0000) -- (0.0000,1.0000) -- (-1.1547,1.0000) -- cycle;
\fill[figblue, fill opacity=0.09] (-0.5774,0.0000) -- (-1.1547,-1.0000) -- (0.0000,-1.0000) -- cycle;
\draw[unet] (-0.5774,0.0000) -- (-1.1547,-1.0000) -- (0.0000,-1.0000) -- cycle;
\fill[figblue, fill opacity=0.09] (-0.5774,0.0000) -- (-1.1547,1.0000) -- (-1.7321,0.0000) -- cycle;
\draw[unet] (-0.5774,0.0000) -- (-1.1547,1.0000) -- (-1.7321,0.0000) -- cycle;
\fill[figblue, fill opacity=0.09] (-0.5774,0.0000) -- (-1.7321,0.0000) -- (-1.1547,-1.0000) -- cycle;
\draw[unet] (-0.5774,0.0000) -- (-1.7321,0.0000) -- (-1.1547,-1.0000) -- cycle;
\fill[figblue, fill opacity=0.09] (0.5774,0.0000) -- (1.1547,1.0000) -- (0.0000,1.0000) -- cycle;
\draw[unet] (0.5774,0.0000) -- (1.1547,1.0000) -- (0.0000,1.0000) -- cycle;
\fill[figblue, fill opacity=0.09] (0.5774,0.0000) -- (0.0000,-1.0000) -- (1.1547,-1.0000) -- cycle;
\draw[unet] (0.5774,0.0000) -- (0.0000,-1.0000) -- (1.1547,-1.0000) -- cycle;
\fill[figblue, fill opacity=0.09] (0.5774,0.0000) -- (1.1547,-1.0000) -- (1.7321,0.0000) -- cycle;
\draw[unet] (0.5774,0.0000) -- (1.1547,-1.0000) -- (1.7321,0.0000) -- cycle;
\fill[figblue, fill opacity=0.09] (0.5774,0.0000) -- (1.7321,0.0000) -- (1.1547,1.0000) -- cycle;
\draw[unet] (0.5774,0.0000) -- (1.7321,0.0000) -- (1.1547,1.0000) -- cycle;
\fill[figblue, fill opacity=0.09] (0.0000,1.0000) -- (-0.5774,2.0000) -- (-1.1547,1.0000) -- cycle;
\draw[unet] (0.0000,1.0000) -- (-0.5774,2.0000) -- (-1.1547,1.0000) -- cycle;
\fill[figblue, fill opacity=0.09] (0.0000,1.0000) -- (1.1547,1.0000) -- (0.5774,2.0000) -- cycle;
\draw[unet] (0.0000,1.0000) -- (1.1547,1.0000) -- (0.5774,2.0000) -- cycle;
\fill[figblue, fill opacity=0.09] (0.0000,1.0000) -- (0.5774,2.0000) -- (-0.5774,2.0000) -- cycle;
\draw[unet] (0.0000,1.0000) -- (0.5774,2.0000) -- (-0.5774,2.0000) -- cycle;
\fill[figblue, fill opacity=0.09] (0.0000,-1.0000) -- (0.5774,-2.0000) -- (1.1547,-1.0000) -- cycle;
\draw[unet] (0.0000,-1.0000) -- (0.5774,-2.0000) -- (1.1547,-1.0000) -- cycle;
\fill[figblue, fill opacity=0.09] (0.0000,-1.0000) -- (-1.1547,-1.0000) -- (-0.5774,-2.0000) -- cycle;
\draw[unet] (0.0000,-1.0000) -- (-1.1547,-1.0000) -- (-0.5774,-2.0000) -- cycle;
\fill[figblue, fill opacity=0.09] (0.0000,-1.0000) -- (-0.5774,-2.0000) -- (0.5774,-2.0000) -- cycle;
\draw[unet] (0.0000,-1.0000) -- (-0.5774,-2.0000) -- (0.5774,-2.0000) -- cycle;
\draw[uout] (-1.1547,1.0000) -- (-1.7321,0.0000) -- (-1.1547,-1.0000) -- (-0.5774,-2.0000) -- (0.5774,-2.0000) -- (1.1547,-1.0000) -- (1.7321,0.0000) -- (1.1547,1.0000) -- (0.5774,2.0000) -- (-0.5774,2.0000) -- cycle;
\draw[ucut] (0.5774,0.0000) -- (1.7321,0.0000);
\draw[ucut] (0.0000,1.0000) -- (-0.5774,2.0000);
\draw[ucut] (-1.7321,0.0000) -- (-0.5774,0.0000);
\draw[ucut] (0.0000,-1.0000) -- (0.5774,-2.0000);
\draw[uloop1] (1.1547,-1.0000) -- (0.0000,-1.0000) -- (-0.5774,0.0000) -- (-1.1547,1.0000);
\draw[uloop2] (1.1547,1.0000) -- (0.5774,0.0000) -- (0.0000,-1.0000) -- (-1.1547,-1.0000);
\end{scope}
\node[unum] at (0.0000,0.3333) {0};
\node[unum] at (0.0000,-0.3333) {1};
\node[unum] at (-0.5774,0.6667) {3};
\node[unum] at (-0.5774,-0.6667) {4};
\node[unum] at (-1.1547,0.3333) {9};
\node[unum] at (-1.1547,-0.3333) {10};
\node[unum] at (0.5774,0.6667) {2};
\node[unum] at (0.5774,-0.6667) {5};
\node[unum] at (1.1547,-0.3333) {13};
\node[unum] at (1.1547,0.3333) {6};
\node[unum] at (-0.5774,1.3333) {8};
\node[unum] at (0.5774,1.3333) {7};
\node[unum] at (0.0000,1.6667) {14};
\node[unum] at (0.5774,-1.3333) {12};
\node[unum] at (-0.5774,-1.3333) {11};
\node[unum] at (0.0000,-1.6667) {15};
\node[ulab, opacity=0.589] at (-1.7321,-2.0000) {$5$};
\node[ulab, opacity=0.589] at (-0.5774,-2.0000) {$7$};
\node[ulab, opacity=0.589] at (0.5774,-2.0000) {$1$};
\node[ulab, opacity=0.589] at (1.7321,-2.0000) {$3$};
\node[ulab, opacity=0.417] at (-2.3094,-1.0000) {$6$};
\node[ulab, opacity=1.000] at (-1.1547,-1.0000) {$0$};
\node[ulab, opacity=1.000] at (0.0000,-1.0000) {$2$};
\node[ulab, opacity=1.000] at (1.1547,-1.0000) {$4$};
\node[ulab, opacity=0.417] at (2.3094,-1.0000) {$6$};
\node[ulab, opacity=0.738] at (-1.7321,0.0000) {$1$};
\node[ulab, opacity=1.000] at (-0.5774,0.0000) {$3$};
\node[ulab, opacity=1.000] at (0.5774,0.0000) {$5$};
\node[ulab, opacity=0.738] at (1.7321,0.0000) {$7$};
\node[ulab, opacity=0.417] at (-2.3094,1.0000) {$2$};
\node[ulab, opacity=1.000] at (-1.1547,1.0000) {$4$};
\node[ulab, opacity=1.000] at (0.0000,1.0000) {$6$};
\node[ulab, opacity=1.000] at (1.1547,1.0000) {$0$};
\node[ulab, opacity=0.417] at (2.3094,1.0000) {$2$};
\node[ulab, opacity=0.589] at (-1.7321,2.0000) {$5$};
\node[ulab, opacity=0.589] at (-0.5774,2.0000) {$7$};
\node[ulab, opacity=0.589] at (0.5774,2.0000) {$1$};
\node[ulab, opacity=0.589] at (1.7321,2.0000) {$3$};

%% file: figs/net-pushed.tex
\providecolor{figblue}{RGB}{40,90,160}
\providecolor{figred}{RGB}{170,60,50}
\tikzset{
  pcell/.style={gray!70, line width=0.3pt},
  pout/.style={black, line width=0.9pt, line join=round},
  plab/.style={font=\scriptsize, inner sep=0.6pt, fill=white},
  pnum/.style={font=\tiny, text=black!55, inner sep=0.5pt},
  pcal/.style={font=\scriptsize, inner sep=1pt},
  pmkone/.style={figblue, line width=1.2pt, line cap=round},
  pmktwo/.style={figred, line width=1.2pt, line cap=round},
}
\fill[figblue, fill opacity=0.09] (-0.8359,0.1416) -- (1.5079,0.1416) -- (-0.5344,0.9826) -- cycle;
\draw[pcell] (-0.8359,0.1416) -- (1.5079,0.1416) -- (-0.5344,0.9826) -- cycle;
\fill[figblue, fill opacity=0.09] (-0.8359,0.1416) -- (0.3768,-0.8975) -- (1.5079,0.1416) -- cycle;
\draw[pcell] (-0.8359,0.1416) -- (0.3768,-0.8975) -- (1.5079,0.1416) -- cycle;
\fill[figblue, fill opacity=0.09] (-0.8359,0.1416) -- (-0.5344,0.9826) -- (-2.4284,0.3868) -- cycle;
\draw[pcell] (-0.8359,0.1416) -- (-0.5344,0.9826) -- (-2.4284,0.3868) -- cycle;
\fill[figblue, fill opacity=0.09] (-0.8359,0.1416) -- (-0.3414,-0.8081) -- (0.3768,-0.8975) -- cycle;
\draw[pcell] (-0.8359,0.1416) -- (-0.3414,-0.8081) -- (0.3768,-0.8975) -- cycle;
\fill[figblue, fill opacity=0.09] (-0.8359,0.1416) -- (-2.4284,0.3868) -- (-2.2000,-1.1640) -- cycle;
\draw[pcell] (-0.8359,0.1416) -- (-2.4284,0.3868) -- (-2.2000,-1.1640) -- cycle;
\fill[figblue, fill opacity=0.09] (-0.8359,0.1416) -- (-2.0112,-1.3363) -- (-0.3414,-0.8081) -- cycle;
\draw[pcell] (-0.8359,0.1416) -- (-2.0112,-1.3363) -- (-0.3414,-0.8081) -- cycle;
\fill[figblue, fill opacity=0.09] (1.5079,0.1416) -- (0.8623,2.1160) -- (-0.5344,0.9826) -- cycle;
\draw[pcell] (1.5079,0.1416) -- (0.8623,2.1160) -- (-0.5344,0.9826) -- cycle;
\fill[figblue, fill opacity=0.09] (1.5079,0.1416) -- (0.3768,-0.8975) -- (1.2024,-1.7912) -- cycle;
\draw[pcell] (1.5079,0.1416) -- (0.3768,-0.8975) -- (1.2024,-1.7912) -- cycle;
\fill[figblue, fill opacity=0.09] (1.5079,0.1416) -- (1.2024,-1.7912) -- (2.4284,-0.3855) -- cycle;
\draw[pcell] (1.5079,0.1416) -- (1.2024,-1.7912) -- (2.4284,-0.3855) -- cycle;
\fill[figblue, fill opacity=0.09] (1.5079,0.1416) -- (2.3490,0.7879) -- (0.8623,2.1160) -- cycle;
\draw[pcell] (1.5079,0.1416) -- (2.3490,0.7879) -- (0.8623,2.1160) -- cycle;
\fill[figblue, fill opacity=0.09] (-0.5344,0.9826) -- (-1.3945,1.9391) -- (-2.4284,0.3868) -- cycle;
\draw[pcell] (-0.5344,0.9826) -- (-1.3945,1.9391) -- (-2.4284,0.3868) -- cycle;
\fill[figblue, fill opacity=0.09] (-0.5344,0.9826) -- (0.8623,2.1160) -- (-0.7888,2.7000) -- cycle;
\draw[pcell] (-0.5344,0.9826) -- (0.8623,2.1160) -- (-0.7888,2.7000) -- cycle;
\fill[figblue, fill opacity=0.09] (-0.5344,0.9826) -- (-0.7888,2.7000) -- (-1.1333,2.1211) -- cycle;
\draw[pcell] (-0.5344,0.9826) -- (-0.7888,2.7000) -- (-1.1333,2.1211) -- cycle;
\fill[figblue, fill opacity=0.09] (0.3768,-0.8975) -- (-0.3452,-2.0401) -- (1.2024,-1.7912) -- cycle;
\draw[pcell] (0.3768,-0.8975) -- (-0.3452,-2.0401) -- (1.2024,-1.7912) -- cycle;
\fill[figblue, fill opacity=0.09] (0.3768,-0.8975) -- (-0.3414,-0.8081) -- (0.2868,-2.7000) -- cycle;
\draw[pcell] (0.3768,-0.8975) -- (-0.3414,-0.8081) -- (0.2868,-2.7000) -- cycle;
\fill[figblue, fill opacity=0.09] (0.3768,-0.8975) -- (0.2868,-2.7000) -- (0.7381,-2.1999) -- cycle;
\draw[pcell] (0.3768,-0.8975) -- (0.2868,-2.7000) -- (0.7381,-2.1999) -- cycle;
\draw[pout] (-2.4284,0.3868) -- (-2.2000,-1.1640);
\draw[pout] (-2.2000,-1.1640) -- (-0.8359,0.1416);
\draw[pout] (-0.8359,0.1416) -- (-2.0112,-1.3363);
\draw[pout] (-2.0112,-1.3363) -- (-0.3414,-0.8081);
\draw[pout] (1.2024,-1.7912) -- (2.4284,-0.3855);
\draw[pout] (2.4284,-0.3855) -- (1.5079,0.1416);
\draw[pout] (1.5079,0.1416) -- (2.3490,0.7879);
\draw[pout] (2.3490,0.7879) -- (0.8623,2.1160);
\draw[pout] (-0.5344,0.9826) -- (-1.3945,1.9391);
\draw[pout] (-1.3945,1.9391) -- (-2.4284,0.3868);
\draw[pout] (0.8623,2.1160) -- (-0.7888,2.7000);
\draw[pout] (-0.7888,2.7000) -- (-1.1333,2.1211);
\draw[pout] (-1.1333,2.1211) -- (-0.5344,0.9826);
\draw[pout] (0.3768,-0.8975) -- (-0.3452,-2.0401);
\draw[pout] (-0.3452,-2.0401) -- (1.2024,-1.7912);
\draw[pout] (-0.3414,-0.8081) -- (0.2868,-2.7000);
\draw[pout] (0.2868,-2.7000) -- (0.7381,-2.1999);
\draw[pout] (0.7381,-2.1999) -- (0.3768,-0.8975);
\draw[pmkone] (1.2024,-1.7912) -- (0.3768,-0.8975);
\draw[pmkone] (0.3768,-0.8975) -- (-0.8359,0.1416);
\draw[pmkone] (-0.8359,0.1416) -- (-2.4284,0.3868);
\draw[pmktwo] (1.5079,0.1416) -- (0.3768,-0.8975);
\draw[pmktwo] (0.3768,-0.8975) -- (-0.3414,-0.8081);
\draw[pmktwo] (0.8623,2.1160) -- (1.5079,0.1416);
\node[pnum] at (0.0458,0.4220) {0};
\node[pnum] at (0.3496,-0.2048) {1};
\node[pnum] at (-1.2662,0.5037) {3};
\node[pnum] at (-0.2668,-0.5213) {4};
\node[pnum] at (-1.8214,-0.2119) {9};
\node[pnum] at (-1.0628,-0.6676) {10};
\node[pnum] at (0.6119,1.0801) {2};
\node[pnum] at (1.0290,-0.8490) {5};
\node[pnum] at (1.7129,-0.6784) {13};
\node[pnum] at (1.5731,1.0152) {6};
\node[pnum] at (-1.4524,1.1029) {8};
\node[pnum] at (-0.1536,1.9329) {7};
\node[pnum] at (-0.8188,1.9346) {14};
\node[pnum] at (0.4113,-1.5763) {12};
\node[pnum] at (0.1074,-1.4685) {11};
\node[pnum] at (0.4672,-1.9325) {15};
\node[plab] at (-0.8359,0.1416) {3};
\node[plab] at (1.5079,0.1416) {5};
\node[plab] at (-0.5344,0.9826) {6};
\node[plab] at (0.3768,-0.8975) {2};
\node[plab] at (-2.4284,0.3868) {4};
\node[plab] at (1.2024,-1.7912) {4};
\node[plab] at (-0.7888,2.7000) {1};
\node[plab] at (-1.3945,1.9391) {7};
\node[plab] at (-2.2000,-1.1640) {1};
\node[plab] at (-2.0112,-1.3363) {1};
\node[plab] at (-0.3452,-2.0401) {1};
\node[plab] at (2.4284,-0.3855) {7};
\node[plab] at (-1.1333,2.1211) {7};
\node[plab] at (0.7381,-2.1999) {1};
\fill[black] (0.8623,2.1160) circle (1.5pt);
\node[pcal, right, xshift=2pt] at (0.8623,2.1160) {$u_{570}(0)$};
\fill[black] (2.3490,0.7879) circle (1.5pt);
\node[pcal, above right] at (2.3490,0.7879) {$u_{570}(7)$};
\fill[black] (-0.3414,-0.8081) circle (1.5pt);
\node[pcal, below left, xshift=3pt, yshift=-6pt] at (-0.3414,-0.8081) {$u_{207}(0)$};
\fill[black] (0.2868,-2.7000) circle (1.5pt);
\node[pcal, below right] at (0.2868,-2.7000) {$u_{207}(7)$};

%% file: figs/corner-data.tex
\providecolor{figblue}{RGB}{40,90,160}
\providecolor{figred}{RGB}{170,60,50}
\tikzset{
  vlab/.style={font=\scriptsize, inner sep=1pt, fill=white, fill opacity=0.75,
               text opacity=1},
  chair/.style={gray!70, line width=0.3pt},
  cbdry/.style={black, line width=0.9pt, line join=round},
  elab/.style={font=\scriptsize, inner sep=1pt},
}
\coordinate (A) at (0,0);
\coordinate (B) at (4,0);
\coordinate (C) at (1.2,2.6);
\coordinate (F) at (1.2,0);

\fill[figblue, fill opacity=0.14] (A)--(B)--(C)--cycle;
\draw[cbdry] (A)--(B)--(C)--cycle;
\draw[chair] (C)--(F);
\draw[chair] (1.0,0) -- (1.0,0.20) -- (1.2,0.20);

\draw[{Stealth[length=1.6mm]}-{Stealth[length=1.6mm]}, figblue, line width=0.6pt]
      (0,-0.55) -- (1.2,-0.55)
      node[midway, below, vlab, text=figblue] {$p/\sqrt\alpha$};
\draw[chair] (0,-0.08)--(0,-0.65);
\draw[chair] (1.2,-0.08)--(1.2,-0.65);
\draw[{Stealth[length=1.6mm]}-{Stealth[length=1.6mm]}, figred, line width=0.6pt]
      (-0.55,0) -- (-0.55,2.6)
      node[midway, left, vlab, text=figred] {$S/\sqrt\alpha$};
\draw[chair] (-0.08,2.6)--(-0.65,2.6);
\draw[chair] (-0.08,0)--(-0.65,0);

\fill (A) circle (1.1pt);
\fill (B) circle (1.1pt);
\fill (C) circle (1.1pt);
\node[vlab, below left]  at (A) {$A$};
\node[vlab, below right] at (B) {$B$};
\node[vlab, above]       at (C) {$C$};

\node[elab, below] at (2.0,0.02) {$\sqrt\alpha$};
\node[elab] at (0.40,1.5) {$\sqrt\beta$};
\node[elab] at (2.90,1.5) {$\sqrt\gamma$};
\node[elab] at (2.05,0.80) {$S = 2\,\mathrm{Area}$};

\pic[draw, figred, line width=0.9pt, angle radius=7mm,
     "$\theta_A$"{text=figred, font=\scriptsize, inner sep=1.5pt},
     angle eccentricity=1.5] {angle=B--A--C};

%% file: figs/fold-relations-square-allblue.tex
\providecolor{figblue}{RGB}{40,90,160}
\providecolor{figred}{RGB}{170,60,50}
\providecolor{figgreen}{RGB}{56,124,92}
\providecolor{figpurple}{RGB}{132,96,158}
\tikzset{
  rflat/.style={figgreen, opacity=0.6, line width=1.2pt, line cap=round,
                dash pattern=on 3.2pt off 2.6pt},
  rcirc/.style={figpurple, opacity=0.6, line width=1.2pt, line cap=round,
                dash pattern=on 3.2pt off 2.6pt},
}
\fill[figblue, fill opacity=0.14] (-2.5423,0.0591) -- (2.8870,0.8158) -- (-1.5549,2.5656) -- cycle;
\fill[figblue, fill opacity=0.14] (-2.5423,0.0591) -- (0.0000,-2.5656) -- (2.8870,0.8158) -- cycle;
\fill[figblue, fill opacity=0.14] (-2.5423,0.0591) -- (-1.5549,2.5656) -- (1.5549,-2.5656) -- cycle;
\fill[figblue, fill opacity=0.14] (-2.5423,0.0591) -- (-1.5549,-2.5656) -- (0.0000,-2.5656) -- cycle;
\fill[figblue, fill opacity=0.14] (-2.5423,0.0591) -- (1.5549,-2.5656) -- (2.2287,0.8552) -- cycle;
\fill[figblue, fill opacity=0.14] (-2.5423,0.0591) -- (2.2287,0.8552) -- (-1.5549,-2.5656) -- cycle;
\fill[figblue, fill opacity=0.14] (2.8870,0.8158) -- (-1.5549,-2.5656) -- (-1.5549,2.5656) -- cycle;
\fill[figblue, fill opacity=0.14] (2.8870,0.8158) -- (0.0000,-2.5656) -- (1.5549,-2.5656) -- cycle;
\fill[figblue, fill opacity=0.14] (2.8870,0.8158) -- (1.5549,-2.5656) -- (1.5549,2.5656) -- cycle;
\fill[figblue, fill opacity=0.14] (2.8870,0.8158) -- (1.5549,2.5656) -- (-1.5549,-2.5656) -- cycle;
\fill[figblue, fill opacity=0.14] (-1.5549,2.5656) -- (1.5549,2.5656) -- (1.5549,-2.5656) -- cycle;
\fill[figblue, fill opacity=0.14] (-1.5549,2.5656) -- (-1.5549,-2.5656) -- (2.2287,0.8552) -- cycle;
\fill[figblue, fill opacity=0.14] (-1.5549,2.5656) -- (2.2287,0.8552) -- (1.5549,2.5656) -- cycle;
\fill[figblue, fill opacity=0.14] (0.0000,-2.5656) -- (2.2287,0.8552) -- (1.5549,-2.5656) -- cycle;
\fill[figblue, fill opacity=0.14] (0.0000,-2.5656) -- (-1.5549,-2.5656) -- (1.5549,2.5656) -- cycle;
\fill[figblue, fill opacity=0.14] (0.0000,-2.5656) -- (1.5549,2.5656) -- (2.2287,0.8552) -- cycle;
\draw[gray!70, line width=0.3pt] (-2.5423,0.0591) -- (2.8870,0.8158) -- (-1.5549,2.5656) -- cycle;
\draw[gray!70, line width=0.3pt] (-2.5423,0.0591) -- (0.0000,-2.5656) -- (2.8870,0.8158) -- cycle;
\draw[gray!70, line width=0.3pt] (-2.5423,0.0591) -- (-1.5549,2.5656) -- (1.5549,-2.5656) -- cycle;
\draw[gray!70, line width=0.3pt] (-2.5423,0.0591) -- (-1.5549,-2.5656) -- (0.0000,-2.5656) -- cycle;
\draw[gray!70, line width=0.3pt] (-2.5423,0.0591) -- (1.5549,-2.5656) -- (2.2287,0.8552) -- cycle;
\draw[gray!70, line width=0.3pt] (-2.5423,0.0591) -- (2.2287,0.8552) -- (-1.5549,-2.5656) -- cycle;
\draw[gray!70, line width=0.3pt] (2.8870,0.8158) -- (-1.5549,-2.5656) -- (-1.5549,2.5656) -- cycle;
\draw[gray!70, line width=0.3pt] (2.8870,0.8158) -- (0.0000,-2.5656) -- (1.5549,-2.5656) -- cycle;
\draw[gray!70, line width=0.3pt] (2.8870,0.8158) -- (1.5549,-2.5656) -- (1.5549,2.5656) -- cycle;
\draw[gray!70, line width=0.3pt] (2.8870,0.8158) -- (1.5549,2.5656) -- (-1.5549,-2.5656) -- cycle;
\draw[gray!70, line width=0.3pt] (-1.5549,2.5656) -- (1.5549,2.5656) -- (1.5549,-2.5656) -- cycle;
\draw[gray!70, line width=0.3pt] (-1.5549,2.5656) -- (-1.5549,-2.5656) -- (2.2287,0.8552) -- cycle;
\draw[gray!70, line width=0.3pt] (-1.5549,2.5656) -- (2.2287,0.8552) -- (1.5549,2.5656) -- cycle;
\draw[gray!70, line width=0.3pt] (0.0000,-2.5656) -- (2.2287,0.8552) -- (1.5549,-2.5656) -- cycle;
\draw[gray!70, line width=0.3pt] (0.0000,-2.5656) -- (-1.5549,-2.5656) -- (1.5549,2.5656) -- cycle;
\draw[gray!70, line width=0.3pt] (0.0000,-2.5656) -- (1.5549,2.5656) -- (2.2287,0.8552) -- cycle;
\draw[rflat] (-1.9281,-2.5656) -- (1.9281,-2.5656);
\draw[rcirc] (0.0000,0.0000) circle (3.0000);
\draw[rflat] (-1.5549,-2.5656) -- (1.5549,-2.5656) -- (1.5549,2.5656) -- (-1.5549,2.5656) -- cycle;
\fill (-1.5549,-2.5656) circle (1.1pt);
\node[font=\scriptsize] at (-1.7004,-2.7564) {$0$};
\fill (2.2287,0.8552) circle (1.1pt);
\node[font=\scriptsize] at (2.4431,0.9631) {$1$};
\fill (0.0000,-2.5656) circle (1.1pt);
\node[font=\scriptsize] at (-0.0311,-2.8036) {$2$};
\fill (-2.5423,0.0591) circle (1.1pt);
\node[font=\scriptsize] at (-2.7819,0.0728) {$3$};
\fill (1.5549,-2.5656) circle (1.1pt);
\node[font=\scriptsize] at (1.6624,-2.7802) {$4$};
\fill (2.8870,0.8158) circle (1.1pt);
\node[font=\scriptsize] at (3.1129,0.8968) {$5$};
\fill (-1.5549,2.5656) circle (1.1pt);
\node[font=\scriptsize] at (-1.6929,2.7619) {$6$};
\fill (1.5549,2.5656) circle (1.1pt);
\node[font=\scriptsize] at (1.6555,2.7835) {$7$};

%% file: figs/fold-relations-hex-allblue.tex
\providecolor{figblue}{RGB}{40,90,160}
\providecolor{figred}{RGB}{170,60,50}
\providecolor{figgreen}{RGB}{56,124,92}
\providecolor{figpurple}{RGB}{132,96,158}
\tikzset{
  rflat/.style={figgreen, opacity=0.6, line width=1.2pt, line cap=round,
                dash pattern=on 3.2pt off 2.6pt},
  rcirc/.style={figpurple, opacity=0.6, line width=1.2pt, line cap=round,
                dash pattern=on 3.2pt off 2.6pt},
}
\fill[figblue, fill opacity=0.14] (-2.7105,1.2732) -- (-1.3628,2.5294) -- (-1.6373,-2.3757) -- cycle;
\fill[figblue, fill opacity=0.14] (-2.7105,1.2732) -- (-0.8974,-2.3484) -- (-1.3628,2.5294) -- cycle;
\fill[figblue, fill opacity=0.14] (-2.7105,1.2732) -- (-1.6373,-2.3757) -- (2.3260,1.2732) -- cycle;
\fill[figblue, fill opacity=0.14] (-2.7105,1.2732) -- (2.3260,1.2732) -- (-0.8974,-2.3484) -- cycle;
\fill[figblue, fill opacity=0.14] (-1.3628,2.5294) -- (-0.0084,-2.7701) -- (-1.6373,-2.3757) -- cycle;
\fill[figblue, fill opacity=0.14] (-1.3628,2.5294) -- (-0.8974,-2.3484) -- (-2.5267,1.5917) -- cycle;
\fill[figblue, fill opacity=0.14] (-1.3628,2.5294) -- (2.3260,1.2732) -- (-0.0084,-2.7701) -- cycle;
\fill[figblue, fill opacity=0.14] (-1.3628,2.5294) -- (0.7490,2.0180) -- (2.3260,1.2732) -- cycle;
\fill[figblue, fill opacity=0.14] (-1.3628,2.5294) -- (-2.5267,1.5917) -- (0.7490,2.0180) -- cycle;
\fill[figblue, fill opacity=0.14] (-1.6373,-2.3757) -- (-2.5267,1.5917) -- (2.3260,1.2732) -- cycle;
\fill[figblue, fill opacity=0.14] (-1.6373,-2.3757) -- (-0.0084,-2.7701) -- (0.7490,2.0180) -- cycle;
\fill[figblue, fill opacity=0.14] (-1.6373,-2.3757) -- (0.7490,2.0180) -- (-2.5267,1.5917) -- cycle;
\fill[figblue, fill opacity=0.14] (-0.8974,-2.3484) -- (2.3260,1.2732) -- (0.7490,2.0180) -- cycle;
\fill[figblue, fill opacity=0.14] (-0.8974,-2.3484) -- (-0.0084,-2.7701) -- (-2.5267,1.5917) -- cycle;
\fill[figblue, fill opacity=0.14] (-0.8974,-2.3484) -- (0.7490,2.0180) -- (-0.0084,-2.7701) -- cycle;
\fill[figblue, fill opacity=0.14] (2.3260,1.2732) -- (-2.5267,1.5917) -- (-0.0084,-2.7701) -- cycle;
\draw[gray!70, line width=0.3pt] (-2.7105,1.2732) -- (-1.3628,2.5294) -- (-1.6373,-2.3757) -- cycle;
\draw[gray!70, line width=0.3pt] (-2.7105,1.2732) -- (-0.8974,-2.3484) -- (-1.3628,2.5294) -- cycle;
\draw[gray!70, line width=0.3pt] (-2.7105,1.2732) -- (-1.6373,-2.3757) -- (2.3260,1.2732) -- cycle;
\draw[gray!70, line width=0.3pt] (-2.7105,1.2732) -- (2.3260,1.2732) -- (-0.8974,-2.3484) -- cycle;
\draw[gray!70, line width=0.3pt] (-1.3628,2.5294) -- (-0.0084,-2.7701) -- (-1.6373,-2.3757) -- cycle;
\draw[gray!70, line width=0.3pt] (-1.3628,2.5294) -- (-0.8974,-2.3484) -- (-2.5267,1.5917) -- cycle;
\draw[gray!70, line width=0.3pt] (-1.3628,2.5294) -- (2.3260,1.2732) -- (-0.0084,-2.7701) -- cycle;
\draw[gray!70, line width=0.3pt] (-1.3628,2.5294) -- (0.7490,2.0180) -- (2.3260,1.2732) -- cycle;
\draw[gray!70, line width=0.3pt] (-1.3628,2.5294) -- (-2.5267,1.5917) -- (0.7490,2.0180) -- cycle;
\draw[gray!70, line width=0.3pt] (-1.6373,-2.3757) -- (-2.5267,1.5917) -- (2.3260,1.2732) -- cycle;
\draw[gray!70, line width=0.3pt] (-1.6373,-2.3757) -- (-0.0084,-2.7701) -- (0.7490,2.0180) -- cycle;
\draw[gray!70, line width=0.3pt] (-1.6373,-2.3757) -- (0.7490,2.0180) -- (-2.5267,1.5917) -- cycle;
\draw[gray!70, line width=0.3pt] (-0.8974,-2.3484) -- (2.3260,1.2732) -- (0.7490,2.0180) -- cycle;
\draw[gray!70, line width=0.3pt] (-0.8974,-2.3484) -- (-0.0084,-2.7701) -- (-2.5267,1.5917) -- cycle;
\draw[gray!70, line width=0.3pt] (-0.8974,-2.3484) -- (0.7490,2.0180) -- (-0.0084,-2.7701) -- cycle;
\draw[gray!70, line width=0.3pt] (2.3260,1.2732) -- (-2.5267,1.5917) -- (-0.0084,-2.7701) -- cycle;
\draw[rcirc] (-0.1923,0.0316) circle (2.8077);
\draw[rcirc] (-0.4128,0.1589) circle (2.5537);
\draw[rcirc] (0.5679,-0.4073) circle (2.4321);
\fill (-2.7105,1.2732) circle (1.1pt);
\node[font=\scriptsize] at (-2.9185,1.3930) {$0$};
\fill (-1.3628,2.5294) circle (1.1pt);
\node[font=\scriptsize] at (-1.4219,2.7620) {$1$};
\fill (-1.6373,-2.3757) circle (1.1pt);
\node[font=\scriptsize] at (-1.7162,-2.6023) {$2$};
\fill (-0.8974,-2.3484) circle (1.1pt);
\node[font=\scriptsize] at (-0.9107,-2.5880) {$3$};
\fill (2.3260,1.2732) circle (1.1pt);
\node[font=\scriptsize] at (2.5515,1.3554) {$4$};
\fill (-0.0084,-2.7701) circle (1.1pt);
\node[font=\scriptsize] at (0.0513,-3.0025) {$5$};
\fill (-2.5267,1.5917) circle (1.1pt);
\node[font=\scriptsize] at (-2.7126,1.7434) {$6$};
\fill (0.7490,2.0180) circle (1.1pt);
\node[font=\scriptsize] at (0.8997,2.2049) {$7$};

%% file: figs/net-square.tex
\providecolor{figblue}{RGB}{40,90,160}
\providecolor{figred}{RGB}{170,60,50}
\tikzset{
  ncell/.style={gray!70, line width=0.3pt},
  nout/.style={black, line width=0.9pt, line join=round},
  ncut/.style={black, line width=0.9pt, dash pattern=on 2.4pt off 1.8pt},
  nlab/.style={font=\scriptsize, inner sep=0.6pt, fill=white,
               fill opacity=0.72, text opacity=1},
  nnum/.style={font=\tiny, text=black!55, inner sep=0.5pt},
  nmkone/.style={figblue, line width=1.2pt},
  nmktwo/.style={figred, line width=1.2pt},
}
\fill[figblue, fill opacity=0.09] (-0.5655,0.3485) -- (1.4607,0.3485) -- (-0.0762,1.2157) -- cycle;
\draw[ncell] (-0.5655,0.3485) -- (1.4607,0.3485) -- (-0.0762,1.2157) -- cycle;
\fill[figblue, fill opacity=0.09] (-0.5655,0.3485) -- (0.2313,-0.7422) -- (1.4607,0.3485) -- cycle;
\draw[ncell] (-0.5655,0.3485) -- (0.2313,-0.7422) -- (1.4607,0.3485) -- cycle;
\fill[figblue, fill opacity=0.09] (-0.5655,0.3485) -- (-0.0762,1.2157) -- (-2.2732,0.9130) -- cycle;
\draw[ncell] (-0.5655,0.3485) -- (-0.0762,1.2157) -- (-2.2732,0.9130) -- cycle;
\fill[figblue, fill opacity=0.09] (-0.5655,0.3485) -- (-0.3379,-0.6628) -- (0.2313,-0.7422) -- cycle;
\draw[ncell] (-0.5655,0.3485) -- (-0.3379,-0.6628) -- (0.2313,-0.7422) -- cycle;
\fill[figblue, fill opacity=0.09] (-0.5655,0.3485) -- (-2.2732,0.9130) -- (-2.2011,-0.3738) -- cycle;
\draw[ncell] (-0.5655,0.3485) -- (-2.2732,0.9130) -- (-2.2011,-0.3738) -- cycle;
\fill[figblue, fill opacity=0.09] (-0.5655,0.3485) -- (-2.2011,-0.3738) -- (-0.3379,-0.6628) -- cycle;
\draw[ncell] (-0.5655,0.3485) -- (-2.2011,-0.3738) -- (-0.3379,-0.6628) -- cycle;
\fill[figblue, fill opacity=0.09] (1.4607,0.3485) -- (1.3965,2.4109) -- (-0.0762,1.2157) -- cycle;
\draw[ncell] (1.4607,0.3485) -- (1.3965,2.4109) -- (-0.0762,1.2157) -- cycle;
\fill[figblue, fill opacity=0.09] (1.4607,0.3485) -- (0.2313,-0.7422) -- (0.8005,-0.8215) -- cycle;
\draw[ncell] (1.4607,0.3485) -- (0.2313,-0.7422) -- (0.8005,-0.8215) -- cycle;
\fill[figblue, fill opacity=0.09] (1.4607,0.3485) -- (0.8005,-0.8215) -- (2.2732,0.3738) -- cycle;
\draw[ncell] (1.4607,0.3485) -- (0.8005,-0.8215) -- (2.2732,0.3738) -- cycle;
\fill[figblue, fill opacity=0.09] (1.4607,0.3485) -- (2.2732,0.3738) -- (1.3965,2.4109) -- cycle;
\draw[ncell] (1.4607,0.3485) -- (2.2732,0.3738) -- (1.3965,2.4109) -- cycle;
\fill[figblue, fill opacity=0.09] (-0.0762,1.2157) -- (-0.8005,2.1082) -- (-2.2732,0.9130) -- cycle;
\draw[ncell] (-0.0762,1.2157) -- (-0.8005,2.1082) -- (-2.2732,0.9130) -- cycle;
\fill[figblue, fill opacity=0.09] (-0.0762,1.2157) -- (1.3965,2.4109) -- (-0.4666,2.7000) -- cycle;
\draw[ncell] (-0.0762,1.2157) -- (1.3965,2.4109) -- (-0.4666,2.7000) -- cycle;
\fill[figblue, fill opacity=0.09] (-0.0762,1.2157) -- (-0.4666,2.7000) -- (-0.8005,2.1082) -- cycle;
\draw[ncell] (-0.0762,1.2157) -- (-0.4666,2.7000) -- (-0.8005,2.1082) -- cycle;
\fill[figblue, fill opacity=0.09] (0.2313,-0.7422) -- (0.8727,-2.1082) -- (0.8005,-0.8215) -- cycle;
\draw[ncell] (0.2313,-0.7422) -- (0.8727,-2.1082) -- (0.8005,-0.8215) -- cycle;
\fill[figblue, fill opacity=0.09] (0.2313,-0.7422) -- (-0.3379,-0.6628) -- (0.5388,-2.7000) -- cycle;
\draw[ncell] (0.2313,-0.7422) -- (-0.3379,-0.6628) -- (0.5388,-2.7000) -- cycle;
\fill[figblue, fill opacity=0.09] (0.2313,-0.7422) -- (0.5388,-2.7000) -- (0.8727,-2.1082) -- cycle;
\draw[ncell] (0.2313,-0.7422) -- (0.5388,-2.7000) -- (0.8727,-2.1082) -- cycle;
\draw[nout] (-2.2732,0.9130) -- (-2.2011,-0.3738) -- (-0.3379,-0.6628) -- (0.5388,-2.7000) -- (0.8727,-2.1082) -- (0.8005,-0.8215) -- (2.2732,0.3738) -- (1.3965,2.4109) -- (-0.4666,2.7000) -- (-0.8005,2.1082) -- cycle;
\draw[ncut] (1.4607,0.3485) -- (2.2732,0.3738);
\draw[ncut] (-0.0762,1.2157) -- (-0.8005,2.1082);
\draw[ncut] (-2.2011,-0.3738) -- (-0.5655,0.3485);
\draw[ncut] (0.2313,-0.7422) -- (0.8727,-2.1082);
\draw[nmkone] (0.8005,-0.8215) -- (0.2313,-0.7422);
\fill[figblue] (0.4169,-0.7680) -- (0.5998,-0.9046) -- (0.6301,-0.6867) -- cycle;
\draw[nmkone] (0.2313,-0.7422) -- (-0.5655,0.3485);
\fill[figblue] (-0.2261,-0.1161) -- (-0.1969,-0.3425) -- (-0.0193,-0.2127) -- cycle;
\draw[nmkone] (-0.5655,0.3485) -- (-2.2732,0.9130);
\fill[figblue] (-1.5143,0.6621) -- (-1.3589,0.4949) -- (-1.2899,0.7038) -- cycle;
\draw[nmktwo] (1.4607,0.3485) -- (0.2313,-0.7422);
\fill[figred] (0.7712,-0.2632) -- (0.9938,-0.2128) -- (0.8478,-0.0482) -- cycle;
\draw[nmktwo] (0.2313,-0.7422) -- (-0.3379,-0.6628);
\fill[figred] (-0.1524,-0.6887) -- (0.0305,-0.8252) -- (0.0609,-0.6073) -- cycle;
\draw[nmktwo] (1.3965,2.4109) -- (1.4607,0.3485);
\fill[figred] (1.4317,1.2797) -- (1.5354,1.4831) -- (1.3156,1.4762) -- cycle;
\node[nnum] at (0.2730,0.6375) {0};
\node[nnum] at (0.3755,-0.0151) {1};
\node[nnum] at (-0.9716,0.8257) {3};
\node[nnum] at (-0.2098,-0.3133) {4};
\node[nnum] at (-1.6799,0.2959) {9};
\node[nnum] at (-1.0348,-0.2294) {10};
\node[nnum] at (0.9270,1.3250) {2};
\node[nnum] at (0.8309,-0.4051) {5};
\node[nnum] at (1.5115,-0.0331) {13};
\node[nnum] at (1.7101,1.0444) {6};
\node[nnum] at (-1.0500,1.4123) {8};
\node[nnum] at (0.2846,2.1089) {7};
\node[nnum] at (-0.4037,1.9954) {14};
\node[nnum] at (0.6348,-1.2240) {12};
\node[nnum] at (0.1203,-1.3650) {11};
\node[nnum] at (0.5348,-1.7685) {15};
\node[nlab] at (-0.5655,0.3485) {$3$};
\node[nlab] at (1.4607,0.3485) {$5$};
\node[nlab] at (-0.0762,1.2157) {$6$};
\node[nlab] at (0.2313,-0.7422) {$2$};
\node[nlab] at (-2.2732,0.9130) {$4$};
\node[nlab] at (-0.3379,-0.6628) {$0$};
\node[nlab] at (-2.2011,-0.3738) {$1$};
\node[nlab] at (1.3965,2.4109) {$0$};
\node[nlab] at (0.8005,-0.8215) {$4$};
\node[nlab] at (2.2732,0.3738) {$7$};
\node[nlab] at (-0.8005,2.1082) {$7$};
\node[nlab] at (-0.4666,2.7000) {$1$};
\node[nlab] at (0.8727,-2.1082) {$1$};
\node[nlab] at (0.5388,-2.7000) {$7$};

%% file: figs/net-hex.tex
\providecolor{figblue}{RGB}{40,90,160}
\providecolor{figred}{RGB}{170,60,50}
\tikzset{
  ncell/.style={gray!70, line width=0.3pt},
  nout/.style={black, line width=0.9pt, line join=round},
  ncut/.style={black, line width=0.9pt, dash pattern=on 2.4pt off 1.8pt},
  nlab/.style={font=\scriptsize, inner sep=0.6pt, fill=white,
               fill opacity=0.72, text opacity=1},
  nnum/.style={font=\tiny, text=black!55, inner sep=0.5pt},
  nlead/.style={black!40, line width=0.5pt},
  nmkone/.style={figblue, line width=1.2pt},
  nmktwo/.style={figred, line width=1.2pt},
}
\fill[figblue, fill opacity=0.09] (0.4236,-0.6992) -- (1.1842,-0.6992) -- (-0.2793,0.7048) -- cycle;
\draw[ncell] (0.4236,-0.6992) -- (1.1842,-0.6992) -- (-0.2793,0.7048) -- cycle;
\fill[figblue, fill opacity=0.09] (0.4236,-0.6992) -- (-0.0481,-2.3033) -- (1.1842,-0.6992) -- cycle;
\draw[ncell] (0.4236,-0.6992) -- (-0.0481,-2.3033) -- (1.1842,-0.6992) -- cycle;
\fill[figblue, fill opacity=0.09] (0.4236,-0.6992) -- (-0.2793,0.7048) -- (-1.6227,-1.0676) -- cycle;
\draw[ncell] (0.4236,-0.6992) -- (-0.2793,0.7048) -- (-1.6227,-1.0676) -- cycle;
\fill[figblue, fill opacity=0.09] (0.4236,-0.6992) -- (-1.6227,-1.0676) -- (-0.0481,-2.3033) -- cycle;
\draw[ncell] (0.4236,-0.6992) -- (-1.6227,-1.0676) -- (-0.0481,-2.3033) -- cycle;
\fill[figblue, fill opacity=0.09] (1.1842,-0.6992) -- (0.1016,1.2824) -- (-0.2793,0.7048) -- cycle;
\draw[ncell] (1.1842,-0.6992) -- (0.1016,1.2824) -- (-0.2793,0.7048) -- cycle;
\fill[figblue, fill opacity=0.09] (1.1842,-0.6992) -- (-0.0481,-2.3033) -- (1.3857,-1.2824) -- cycle;
\draw[ncell] (1.1842,-0.6992) -- (-0.0481,-2.3033) -- (1.3857,-1.2824) -- cycle;
\fill[figblue, fill opacity=0.09] (1.1842,-0.6992) -- (1.9447,0.7184) -- (0.1016,1.2824) -- cycle;
\draw[ncell] (1.1842,-0.6992) -- (1.9447,0.7184) -- (0.1016,1.2824) -- cycle;
\fill[figblue, fill opacity=0.09] (1.1842,-0.6992) -- (1.6781,0.0496) -- (1.9447,0.7184) -- cycle;
\draw[ncell] (1.1842,-0.6992) -- (1.6781,0.0496) -- (1.9447,0.7184) -- cycle;
\fill[figblue, fill opacity=0.09] (1.1842,-0.6992) -- (1.3857,-1.2824) -- (1.6781,0.0496) -- cycle;
\draw[ncell] (1.1842,-0.6992) -- (1.3857,-1.2824) -- (1.6781,0.0496) -- cycle;
\fill[figblue, fill opacity=0.09] (-0.2793,0.7048) -- (-1.9447,0.9140) -- (-1.6227,-1.0676) -- cycle;
\draw[ncell] (-0.2793,0.7048) -- (-1.9447,0.9140) -- (-1.6227,-1.0676) -- cycle;
\fill[figblue, fill opacity=0.09] (-0.2793,0.7048) -- (0.1016,1.2824) -- (-1.6523,2.2460) -- cycle;
\draw[ncell] (-0.2793,0.7048) -- (0.1016,1.2824) -- (-1.6523,2.2460) -- cycle;
\fill[figblue, fill opacity=0.09] (-0.2793,0.7048) -- (-1.6523,2.2460) -- (-1.9447,0.9140) -- cycle;
\draw[ncell] (-0.2793,0.7048) -- (-1.6523,2.2460) -- (-1.9447,0.9140) -- cycle;
\fill[figblue, fill opacity=0.09] (-0.0481,-2.3033) -- (-1.6227,-1.0676) -- (-1.8893,-1.7364) -- cycle;
\draw[ncell] (-0.0481,-2.3033) -- (-1.6227,-1.0676) -- (-1.8893,-1.7364) -- cycle;
\fill[figblue, fill opacity=0.09] (-0.0481,-2.3033) -- (-0.1353,-2.7000) -- (1.3857,-1.2824) -- cycle;
\draw[ncell] (-0.0481,-2.3033) -- (-0.1353,-2.7000) -- (1.3857,-1.2824) -- cycle;
\fill[figblue, fill opacity=0.09] (0.1888,1.6791) -- (-1.6523,2.2460) -- (0.1016,1.2824) -- cycle;
\draw[ncell] (0.1888,1.6791) -- (-1.6523,2.2460) -- (0.1016,1.2824) -- cycle;
\fill[figblue, fill opacity=0.09] (1.9447,0.7184) -- (1.6227,2.7000) -- (0.1016,1.2824) -- cycle;
\draw[ncell] (1.9447,0.7184) -- (1.6227,2.7000) -- (0.1016,1.2824) -- cycle;
\draw[nout] (1.6781,0.0496) -- (1.9447,0.7184) -- (1.6227,2.7000) -- (0.1016,1.2824) -- (0.1888,1.6791) -- (-1.6523,2.2460) -- (-1.9447,0.9140) -- (-1.6227,-1.0676) -- (-1.8893,-1.7364) -- (-0.0481,-2.3033) -- (-0.1353,-2.7000) -- (1.3857,-1.2824) -- cycle;
\draw[ncut] (-1.6227,-1.0676) -- (0.4236,-0.6992);
\draw[ncut] (-0.2793,0.7048) -- (-1.9447,0.9140);
\draw[ncut] (1.6781,0.0496) -- (1.1842,-0.6992);
\draw[nmkone] (1.1842,-0.6992) -- (1.9447,0.7184);
\fill[figblue] (1.5052,-0.1008) -- (1.3138,-0.2250) -- (1.5076,-0.3290) -- cycle;
\draw[nmkone] (-1.6227,-1.0676) -- (0.4236,-0.6992);
\fill[figblue] (-0.5011,-0.8657) -- (-0.7174,-0.7929) -- (-0.6784,-1.0094) -- cycle;
\draw[nmkone] (0.4236,-0.6992) -- (1.1842,-0.6992);
\fill[figblue] (0.9039,-0.6992) -- (0.7039,-0.5892) -- (0.7039,-0.8092) -- cycle;
\draw[nmktwo] (1.1842,-0.6992) -- (0.1016,1.2824);
\fill[figred] (0.5950,0.3793) -- (0.5943,0.1511) -- (0.7874,0.2566) -- cycle;
\draw[nmktwo] (-0.1353,-2.7000) -- (1.3857,-1.2824);
\fill[figred] (0.6984,-1.9230) -- (0.4770,-1.9789) -- (0.6270,-2.1398) -- cycle;
\draw[nmktwo] (1.3857,-1.2824) -- (1.1842,-0.6992);
\fill[figred] (1.2523,-0.8963) -- (1.2136,-1.1213) -- (1.4216,-1.0494) -- cycle;
\node[nnum] at (0.4429,-0.2312) {0};
\node[nnum] at (0.5199,-1.2339) {1};
\node[nnum] at (-0.4928,-0.3540) {3};
\node[nnum] at (-0.4157,-1.3567) {4};
\node[nnum] at (0.3355,0.4293) {2};
\node[nnum] at (0.8406,-1.4283) {5};
\node[nnum] at (1.0768,0.4339) {6};
\node[nnum] (ncallout13) at (1.9241,-0.2772) {12};
\draw[nlead] (ncallout13) -- (1.6023,0.0229);
\node[nnum] (ncallout12) at (1.8001,-0.6709) {11};
\draw[nlead] (ncallout12) -- (1.4160,-0.6440);
\node[nnum] at (-1.2822,0.1837) {8};
\node[nnum] at (-0.6100,1.4111) {7};
\node[nnum] at (-1.2921,1.2883) {15};
\node[nnum] at (-1.1867,-1.7024) {9};
\node[nnum] at (0.4008,-2.0952) {10};
\node[nnum] at (-0.4540,1.7358) {14};
\node[nnum] at (1.2230,1.5669) {13};
\node[nlab] at (0.4236,-0.6992) {$0$};
\node[nlab] at (1.1842,-0.6992) {$1$};
\node[nlab] at (-0.2793,0.7048) {$2$};
\node[nlab] at (-0.0481,-2.3033) {$3$};
\node[nlab] at (-1.6227,-1.0676) {$4$};
\node[nlab] at (0.1016,1.2824) {$5$};
\node[nlab] at (1.3857,-1.2824) {$6$};
\node[nlab] at (1.9447,0.7184) {$4$};
\node[nlab] at (1.6781,0.0496) {$7$};
\node[nlab] at (-1.9447,0.9140) {$6$};
\node[nlab] at (-1.6523,2.2460) {$7$};
\node[nlab] at (-1.8893,-1.7364) {$7$};
\node[nlab] at (-0.1353,-2.7000) {$5$};
\node[nlab] at (0.1888,1.6791) {$3$};
\node[nlab] at (1.6227,2.7000) {$6$};

%% file: figs/sep-corners.tex
\providecolor{figblue}{RGB}{40,90,160}
\providecolor{figred}{RGB}{170,60,50}
\fill[black!15] (0.836578,1.687914) -- (2.040626,0.916580) -- (3.936164,2.359556) -- (4.042281,2.898307) -- (0.836578,2.363426) -- cycle;
\draw[figblue, line width=0.8pt] (0.000000,2.223840) -- (3.471403,0.000000) -- (4.042281,2.898307) -- cycle;
\draw[figred, line width=0.8pt] (4.600000,2.864901) -- (0.836578,0.000000) -- (0.836578,4.347461) -- cycle;
\fill[black] (4.042281,2.898307) circle (1.0pt);
\fill[black] (2.040626,0.916580) circle (1.0pt);
\fill[black] (0.836578,1.687914) circle (1.0pt);
\fill[black] (3.936164,2.359556) circle (1.0pt);
\fill[black] (0.836578,2.363426) circle (1.0pt);
\draw[black, line width=0.5pt] (0.836578,2.363426) circle (2.6pt);
\node[font=\small, figblue] at (3.627201,-0.275129) {$341$};
\node[font=\small, figred] at (0.665101,4.613101) {$506$};

%% file: figs/sep-profile.tex
\providecolor{figblue}{RGB}{40,90,160}
\providecolor{figred}{RGB}{170,60,50}
\begin{scope}[scale=1.05]
\draw[black!45, line width=0.5pt] (-0.1,0) -- (4.1,0);
\draw[figred!60!black, line width=1.8pt] (0.9,0) -- (3.1,0);
\node[font=\small, below=1pt] at (2.0,0) {$K$};
\fill[black] (0.9,0) circle (1.0pt);
\draw[black, line width=0.5pt] (0.9,0) circle (2.6pt);
\draw[black!35, dotted, line width=0.5pt] (0.9,0.1) -- (0.9,0.58);
\draw[figblue, dashed, line width=0.6pt] (0.25,0.585) -- (0.55,0.675);
\draw[figblue, line width=1.5pt] (0.55,0.675) -- (3.1,1.44);
\draw[figblue, dashed, line width=0.6pt] (3.1,1.44) -- (3.6,1.59);
\node[font=\scriptsize, figblue, right=1pt] at (3.6,1.59) {sheet of $F$};
\draw[figred, dashed, line width=0.6pt] (0.4,0.61) -- (0.9,0.62);
\draw[figred, line width=1.5pt] (0.9,0.62) -- (3.9,0.68);
\node[font=\scriptsize, figred, right=1pt] at (3.9,0.68) {sheet of $G$};
\end{scope}

%% file: figs/table-coords.tex
\begin{tabular}{@{}c r r r@{}}
\toprule
$v$ & $x_v$ & $y_v$ & $z_v$ \\
\midrule
$0$ & $0$ & $0$ & $0$ \\
$1$ & $1$ & $0$ & $0$ \\
$2$ & $0.26278582057559752$ & $0.21418749136689999$ & $0$ \\
$3$ & $0.040648781980067765$ & $-0.18878547675520668$ & $0.034114710864745844$ \\
$4$ & $0.6161696729561239$ & $0.20692899510328877$ & $-0.39870402196652055$ \\
$5$ & $1.1297992036975992$ & $-0.022238686825010845$ & $0.13031507592364372$ \\
$6$ & $0.5703850909491307$ & $-0.70911659242674696$ & $0.14211112502680828$ \\
$7$ & $1.1433556271572907$ & $-0.26117826830428981$ & $0.13749394148627028$ \\
\bottomrule
\end{tabular}

%% file: figs/table-coords-rho.tex
\begin{tabular}{@{}c r r r@{}}
\toprule
$v$ & $x_v$ & $y_v$ & $z_v$ \\
\midrule
$0$ & $-0.0052919576658824819$ & $0.25189213380754671$ & $-0.19636317590069088$ \\
$1$ & $-0.10330037049737666$ & $0.097954858850342955$ & $0.23022989757571455$ \\
$2$ & $0.088435194516004872$ & $-0.042881936616869155$ & $-0.16602959156361255$ \\
$3$ & $0.10713507949378637$ & $-0.03807120945413147$ & $-0.19492235829557528$ \\
$4$ & $-0.15821722275466177$ & $0.26928131058897076$ & $-0.057266978045909966$ \\
$5$ & $0.099801133842719214$ & $0.17571318949877185$ & $0.042053203465396087$ \\
$6$ & $-0.026706140635424078$ & $0.3003490487315576$ & $-0.18681063266426795$ \\
$7$ & $-0.004032608759816151$ & $-0.14181859043688222$ & $-0.02715724967818926$ \\
\bottomrule
\end{tabular}